\definecolor{Myblue}{rgb}{0,0,0.6}  
\theoremstyle{definition}
\newtheorem{defn}{Definition}
\newtheorem{thm}[defn]{Theorem}
\newtheorem{prp}[defn]{Proposition}
\newtheorem{lem}[defn]{Lemma}
\newtheorem{cor}[defn]{Corollary}
\newtheorem{rem}[defn]{Remark}
\newtheorem{example}[defn]{Example}
\numberwithin{equation}{section}
\numberwithin{defn}{section}
\numberwithin{figure}{section}
\newcommand{\pic}[2][0.75]{
	\begin{tikzpicture}[scale=0.5,baseline={([yshift=-.5ex]current bounding box.center)}]
	\node at (0,0) {\includegraphics[scale=#1]{figures/#2}};
	\end{tikzpicture}
}
\begin{document}
\def\it{\textit}
\def\mcA{\mathcal{A}}
\def\mcB{\mathcal{B}}
\def\mcC{\mathcal{C}}
\def\mcD{\mathcal{D}}
\def\mcI{\mathcal{I}}
\def\mcS{\mathcal{S}}
\def\mcZ{\mathcal{Z}}
\def\mcACA{{_A \mathcal{C}_A}}
\def\mcD{\mathcal{D}}
\def\euT{\mathscr{T}}
\def\opk{\Bbbk}
\def\opA{\mathbb{A}}
\def\opZ{\mathbb{Z}}
\def\opid{\mathbbm{1}}
\def\a{\alpha}
\def\b{\beta}
\def\g{\gamma}
\def\d{\delta}
\def\D{\Delta}
\def\vareps{\varepsilon}
\def\l{\lambda}
\def\abar{\overline{\a}}
\def\bbar{\overline{\b}}
\def\gbar{\overline{\g}}
\def\ddbar{\overline{\d}}
\def\mubar{\overline{\mu}}
\def\pibar{{\bar{\pi}}}

\def\opp{{\operatorname{op}}}
\def\id{\operatorname{id}}
\def\im{\operatorname{im}}
\def\Hom{\operatorname{Hom}}
\def\End{\operatorname{End}}
\def\tr{\operatorname{tr}}
\def\ev{\operatorname{ev}}
\def\coev{\operatorname{coev}}
\def\evt{\widetilde{\operatorname{ev}}}
\def\coevt{\widetilde{\operatorname{coev}}}
\def\Id{\operatorname{Id}}
\def\Vect{\operatorname{Vect}}
\def\loc{{\operatorname{loc}}}
\def\orb{{\operatorname{orb}}}
\def\coker{{\operatorname{coker}}}
\def\Ind{{\operatorname{Ind}}}
\def\Irr{{\operatorname{Irr}}}
\def\Dim{\operatorname{Dim}}

\def\Lra{\Leftrightarrow}
\def\Ra{\Rightarrow}
\def\ra{\rightarrow}
\def\lra{\leftrightarrow}
\def\xra{\xrightarrow}

\newcommand{\eqrefO}[1]{\hyperref[eq:O#1]{\text{(O#1)}}}
\newcommand{\eqrefT}[1]{\hyperref[eq:T#1]{\text{(T#1)}}}

\title{Constructing modular categories from orbifold data}

\author{
Vincentas Mulevi\v{c}ius \quad
Ingo Runkel\\[0.5cm]
\normalsize{\texttt{\href{mailto:vincentas.mulevicius@uni-hamburg.de}{vincentas.mulevicius@uni-hamburg.de}}} \quad
\normalsize{\texttt{\href{mailto:ingo.runkel@uni-hamburg.de}{ingo.runkel@uni-hamburg.de}}}\\[0.1cm]
{\normalsize\slshape Fachbereich Mathematik, Universit\"{a}t Hamburg, Germany}\\[-0.1cm]
}

\date{}
\maketitle

\begin{abstract}
In Carqueville et al., \normalsize{\texttt{\href{https://arxiv.org/abs/1809.01483}{arXiv:1809.01483}}}, the notion of an orbifold datum $\opA$ in a modular fusion category $\mcC$ was introduced as part of a generalised orbifold construction for Reshetikhin-Turaev TQFTs.
In this paper, given a simple orbifold datum $\opA$ in $\mcC$, we introduce a ribbon category $\mcC_\opA$ and show that it is again a modular fusion category.
The definition of $\mcC_\opA$ is motivated by properties of Wilson lines in the generalised orbifold.
We analyse two examples in detail:
(i)  when $\opA$ is given by a simple commutative $\D$-separable Frobenius algebra $A$ in $\mcC$;
(ii) when $\opA$ is an orbifold datum in $\mcC = \Vect$, built from a spherical fusion category $\mcS$.
We show that in case (i), $\mcC_\opA$ is ribbon-equivalent to the category of local modules of $A$, and in case (ii), to the Drinfeld centre of $\mcS$.
The category $\mcC_\opA$ thus unifies these two constructions into a single algebraic setting.
\end{abstract}
\newpage

\setcounter{tocdepth}{2}

\tableofcontents

\section{Introduction}

A modular fusion category (MFC) is a semisimple modular tensor category, that is, a fusion category which is equipped with a braiding and a ribbon twist, such that the braiding satisfies a non-degeneracy condition. Modular fusion categories are important ingredients in several constructions in mathematics and mathematical physics, such as link and three-manifold invariants, two-dimensional conformal field theory, and topological phases of matter.

Three ``intrinsic'' ways to produce examples of MFCs are:
\begin{enumerate}
\item
If $\mathcal{S}$ is a spherical fusion category, one can construct its Drinfeld centre $\mathcal{Z}(\mathcal{S})$, which is a modular fusion category (see \cite[Thm.\ 1.2]{Mu}).
\item 
If $\mathcal{C}$ is a MFC and if $A \in \mathcal{C}$ is a commutative simple special Frobenius algebra, then $\mathcal{C}_A^\mathrm{loc}$, the category of so-called local $A$-modules, is again a MFC (see \cite[Thm.\ 4.5]{KO}).
\item
If $\mathcal{B} = \bigoplus_{g \in G} \mathcal{B}_g$ is a $G$-crossed ribbon category for some finite group $G$ such that its neutral component $\mathcal{B}_e$ is modular, then $\mathcal{B}^G$, its $G$-equivariantisation, aka.\ the gauging by $G$, is a MFC (see \cite[Thm.\ 10.4]{Ki}, \cite[Prop. 4.56]{DGNO}).
\end{enumerate}
In \cite{CRS3} it was found that all three cases produce examples of ``orbifold data'' in a MFC $\mathcal{C}$, which means that they define so-called generalised orbifolds of Reshetikhin-Turaev 3d TQFTs.

Below, we briefly recall the definition of an orbifold datum $\mathbb{A}$ in $\mathcal{C}$ and summarise how it produces a new MFC $\mathcal{C}_\mathbb{A}$. This is the main result of this paper. We then motivate the algebraic construction of $\mathcal{C}_\mathbb{A}$ from 3d TQFT and from the generalised orbifold construction.

\subsection{Algebraic results}

Let $\mathcal{C}$ be a MFC over an algebraically closed field $\Bbbk$. An {\em orbifold datum} in $\mathcal{C}$ is a tuple $\mathbb{A} = (A,T,\alpha,\bar\alpha,\psi,\phi)$, where
\begin{itemize}
	\item[$A$:] a $\Delta$-separable symmetric Frobenius algebra in $\mathcal{C}$ ($\Delta$-separable means that $\mu \circ \Delta=\id_A$, where $\mu$ is the product and $\Delta$ the coproduct of $A$),
	\item[$T$:] an $A$-$AA$-bimodule in $\mathcal{C}$, where we abbreviate $A \otimes A$ by $AA$,
	\item[$\alpha$:] an $A$-$AAA$ bimodule map $T \otimes_{A,2} T \to T \otimes_{A,1} T$, where the additional index indicates over which tensor factor of $A\otimes A$ the tensor product is taken,
	\item[$\bar\alpha$:] up to normalisation the inverse of $\alpha$,
	\item[$\psi,\phi$:] normalisation factors $\psi \in \mathrm{End}_{AA}(A)^\times$ and $\phi \in \Bbbk^\times$.
\end{itemize}
These are subject to further conditions \cite{CRS1,CRS3} which we recall in Section \ref{subsec:orb_data}.
The three examples above give rise to orbifold data as follows \cite{CRS3}.
\begin{enumerate}
\item
\label{eg1}
In this case, $\mathcal{C}=\mathrm{Vect}$, the category of finite-dimensional $\Bbbk$-vector spaces. 
Let $\mathcal{I}$ be a set of representatives of isomorphism classes of simple objects in $\mathcal{S}$. Then
$A = \bigoplus_{i \in \mathcal{I}} \mathrm{End}_\mathcal{S}(i)$, 
$T = \bigoplus_{i,j,l \in \mathcal{I}} \mathcal{S}(l,i\otimes j)$,
$\alpha$, $\bar\alpha$ are defined via the associator of $\mathcal{S}$, and
$\psi(\id_i) = (\dim i)^{1/2} \cdot \id_i$, $\phi = \Dim(\mcS)^{-1}$
(see Section~\ref{Drinfeld-example}).
\item
\label{eg2}
Here, $\mathcal{C}$ is again an arbitrary MFC and we set $T=A$, $\alpha = \bar\alpha = \Delta \circ \mu$ and $\psi = \id_A$, $\phi = 1$
(see Section~\ref{sec:locmod}).

\item
\label{eg3}
Choose a simple object $m_g \in \mathcal{B}_g$ for each $g \in G$. Then one can take $A = \bigoplus_{g \in G} m_g^* \otimes m_g$, $T = \bigoplus_{g,h \in G} m_{gh}^* \otimes m_g \otimes m_h$, and
$\psi \big|_{m_g^* \otimes m_g} = (\dim m_g)^{-1/2} \id$, $\phi = 1/|G|$
(see \cite[Sec.\,5]{CRS3}).
\end{enumerate}

Fix an orbifold datum $\mathbb{A}$ in $\mathcal{C}$.
We now turn to the main construction in this paper, that of the category $\mathcal{C}_\mathbb{A}$. Its objects are tuples $(M,\tau_1,\tau_2,\overline{\tau_1},\overline{\tau_2})$, where $M$ is an $A$-$A$-bimodule, $\tau_i : M \otimes_{A} T \to T \otimes_{A,i} M$ for $i=1,2$ are $A$-$AA$-bimodule maps, and
$\overline{\tau_i}$ is 
-- up to normalisation -- inverse to $\tau_i$. The $\tau_i$, $\overline{\tau_i}$ satisfy conditions listed in Section \ref{subsec:CA_defined}. Morphisms in $\mathcal{C}_\mathbb{A}$ are bimodule maps compatible with the $\tau_i$ and $\overline{\tau_i}$.

We endow $\mathcal{C}_\mathbb{A}$ with the structure of a ribbon category (see Section \ref{subsec:CA_defined}). For example, the tensor product of two objects $(M,\text{$\tau$'s})$ and $(N,\text{$\tau$'s})$ has as underlying bimodule simply $M \otimes_A N$.
We call the orbifold datum $\opA$ \it{simple} if the tensor unit $\opid_{\mcC_\opA} := A$ is a simple object in $\mcC_\opA$.
We stress that $A$ can be simple in $\mcC_\opA$ even if it is not simple as a bimodule over itself.
We show (Theorem \ref{thm:CA_is_modular}):

\begin{thm}
For $\mathcal{C}$ a MFC and $\mathbb{A}$ a simple orbifold datum in $\mathcal{C}$, $\mathcal{C}_\mathbb{A}$ is also a MFC.
\end{thm}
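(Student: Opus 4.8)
The plan is to verify the axioms of a modular fusion category for $\mcC_\opA$ in stages: first establish that $\mcC_\opA$ is a finite semisimple $\opk$-linear abelian category, then that the ribbon structure defined in Section~\ref{subsec:CA_defined} is well-defined and compatible with the other data, and finally that the braiding is non-degenerate. The first step hinges on the simplicity hypothesis on $\opA$: one shows that $\End_{\mcC_\opA}(A) = \opk$ implies finiteness and semisimplicity. I would do this by exhibiting $\mcC_\opA$ as the category of modules over an algebra, or more precisely by recognising the forgetful functor $U\colon \mcC_\opA \to {}_A\mcC_A$ to $A$-$A$-bimodules as having a two-sided adjoint (induction along the $\tau$-data), so that $\mcC_\opA$ inherits enough exactness and compactness from ${}_A\mcC_A$; semisimplicity then follows from the existence of the pairing coming from duals together with the fact that the tensor unit is simple, via the standard argument that a fusion category is semisimple once $\mathrm{Hom}$-spaces into the unit separate morphisms (using sphericity to build the separating functional). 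Alternatively, and perhaps more cleanly, one identifies $\mcC_\opA$ with the (relative) Deligne product or a category of local modules over a suitable algebra in a larger category built from $\mcC$ and $\opA$, for which semisimplicity is already known.

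Next I would check that the ribbon structure is genuine: that the tensor product on $\tau$-data is associative with unit $A$, that the braiding $c_{M,N}$ and twist $\theta_M$ defined on underlying bimodules actually descend to $\mcC_\opA$ (i.e.\ commute with the $\tau_i, \overline{\tau_i}$), and that duals in ${}_A\mcC_A$ lift to duals in $\mcC_\opA$ with the induced $\tau$-data, making $\mcC_\opA$ rigid and the twist compatible with duality (the ribbon condition $\theta_{M^*} = (\theta_M)^*$). Most of these are diagrammatic bimodule computations using the orbifold-datum relations (the pentagon-like and hexagon-like identities satisfied by $\alpha,\bar\alpha,\psi,\phi$), so I would organise them around a graphical calculus for $A$-$AA$-bimodule maps and the defining relations of Section~\ref{subsec:orb_data}. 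Rigidity is the one place where the normalisation factors $\psi$ and $\phi$ enter crucially — they are exactly what make the zig-zag identities hold on the nose — so I would isolate the computation of $\ev$, $\coev$ and their tildes for $(M,\tau\text{'s})$ and verify the snake equations directly.

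For modularity I would compute the global dimension and the $S$-matrix. The key structural input is that $\mcC_\opA$ should be a \emph{fusion} category (semisimple, finitely many simples, $\End(\opid_{\mcC_\opA})=\opk$), and then non-degeneracy of the braiding can be checked via either of the equivalent criteria: the Müger-centre criterion ($\mcC_\opA' \simeq \Vect$), or the invertibility of the $S$-matrix. I expect the cleanest route is a dimension count: establish a formula relating $\Dim(\mcC_\opA)$ to $\Dim(\mcC)$ and the numerical invariants of $\opA$ (one anticipates something like $\Dim(\mcC_\opA) \cdot (\text{normalisation}) = \Dim(\mcC)$, analogous to $\Dim(\mcC_A^{\loc}) = \Dim(\mcC)/(\dim A)^2$ in example~\ref{eg2}), and then invoke the general fact that a braided fusion category whose Müger centre is trivial is modular — proving triviality of the centre by showing any transparent object would force, via the orbifold-datum axioms and the non-degeneracy of $\mcC$ itself, that it is a multiple of $A$. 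The main obstacle I anticipate is precisely this last point: controlling the Müger centre of $\mcC_\opA$ in terms of that of $\mcC$. Unlike the semisimplicity and ribbon steps, which are ``local'' bimodule bookkeeping, non-degeneracy is a global statement, and the passage from transparency in $\mcC_\opA$ back to transparency (hence triviality) in $\mcC$ requires understanding how the forgetful functor $U$ and its adjoint interact with the braiding — essentially one needs that $U$ is dominant and reflects transparency, which may itself require the simplicity of $\opA$ in an essential way and a careful analysis of the centralizer of the image of the adjoint of $U$.
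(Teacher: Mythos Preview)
Your outline for semisimplicity and the ribbon structure is essentially correct and aligns with the paper: the forgetful functor $U \colon \mcC_\opA \to {}_A\mcC_A$ does admit a biadjoint (the ``pipe functor'' $P$, built by wrapping a bimodule in a tube of $T$'s), the biadjunction is separable, and this transfers finite semisimplicity from ${}_A\mcC_A$ to $\mcC_\opA$. The paper actually factors this through two intermediate categories $\mcD_1, \mcD_2$ (objects carrying only one of the two $\tau_i$), but the mechanism is the one you describe. The ribbon and spherical checks are indeed diagrammatic bookkeeping. One correction: simplicity of $\opA$ is \emph{not} used for semisimplicity --- $\mcC_\opA$ is ribbon multifusion for any orbifold datum, and simplicity only upgrades multifusion to fusion.

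The genuine gap is modularity. You correctly flag it as the hard step, but your proposed route --- showing that a transparent object in $\mcC_\opA$ reflects to a transparent object in $\mcC$ via $U$ --- is left unresolved in your own sketch, and indeed it is not clear how to make this work directly (the forgetful functor is not braided). The paper does not argue via the M\"uger centre. Instead it verifies the Kirillov--Ostrik criterion directly: for each simple $\D \in \mcC_\opA$, the endomorphism $L_\D$ given by encircling a $\D$-strand with the sum $\sum_\Lambda |\Lambda|_{\mcC_\opA}\,\Lambda$ must equal $c\cdot\delta_{\D,A}\cdot\id_\D$ for some $c\neq 0$. The computation has four ingredients you are missing:
\begin{enumerate}
\item Using the chain of biadjunctions $\mcC \leftrightarrows {}_A\mcC_A \leftrightarrows \mcC_\opA$ and the trace formula $\tr_{\mcC_\opA} f \cdot \tr_\mcC\psi^4 = \tr_\mcC(\omega_M^2\circ f)$, one rewrites $L_\D$ as a sum over simples $k$ of $\mcC$ of the double braiding of $P(\Ind_A k)$ with $\D$.
\item The braiding $c_{P(M),N}$ with a pipe object simplifies to an expression in which the underlying braiding of $\mcC$ is exposed (equation~\eqref{eq:braidings_w_FM}).
\item Now the modularity of $\mcC$ enters, not abstractly, but through the concrete identity~\eqref{eq:id_for_mod_cats}: the sum $\sum_k |k|_\mcC$ of $k$-encirclements in $\mcC$ acts as $\Dim\mcC \cdot \sum_\alpha \bar\alpha\circ\alpha$, a projection onto $\mcC(\opid,-)$.
\item After further graphical manipulation the result is recognised as the \emph{averaging idempotent}~\eqref{eq:average} projecting ${}_A\mcC_A(A,\D)$ onto $\mcC_\opA(A,\D)$; its trace is $\dim\mcC_\opA(A,\D) = \delta_{A,\D}$ by simplicity of $\opA$.
\end{enumerate}
So the non-degeneracy of $\mcC$ is consumed in a single explicit Hopf-link identity, and simplicity of $\opA$ enters only at the last step. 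The dimension formula $\Dim\mcC_\opA = \Dim\mcC / (\phi^4 (\tr_\mcC\psi^4)^2)$ drops out of the same computation, confirming your expectation.
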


Recall that the dimension of a MFC is defined as the sum over the squares of the quantum dimensions of
simple objects. The dimension of $\mcC_\opA$ can be directly expressed in terms of the constituents of the orbifold datum $\opA$. Namely, we show that $\tr_\mcC \psi^4$ is non-zero and that
\begin{equation}\label{intro-DimCA}
\Dim \mcC_\opA \,=\, \frac{\Dim\mcC}{\phi^4 \cdot (\tr_\mcC \psi^4)^2} \ .
\end{equation}
Let us also illustrate this in the first two examples. 
For example~\ref{eg1}, $\Dim\mcC=1$, $\phi = \Dim(\mcS)^{-1}$ and $\tr_\mcC \psi^4=\Dim(\mcS)$
and so $\Dim \mcC_\opA = (\Dim\mcS)^2$.
For example~\ref{eg2} we have $\phi=1$ and $\tr_\mcC \psi^4=\dim(A)$, so that $\Dim \mcC_\opA = \Dim\mcC / \dim(A)^2$. 
Both results are to be expected in light of the following theorem (Theorems \ref{thm:CA_equiv_CAloc} and \ref{thm:D_equiv_centre}).
\begin{thm}
For the orbifold datum in example~\ref{eg1} we have the equivalence of ribbon categories $\mathcal{C}_\mathbb{A} \cong \mathcal{Z}(\mathcal{S})$, and for that in example~\ref{eg2} we have $\mathcal{C}_\mathbb{A} \cong \mathcal{C}_A^\mathrm{loc}$.
\end{thm}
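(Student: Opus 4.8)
The plan is to construct in each of the two cases a $\opk$-linear functor to the stated target, upgrade it to a ribbon functor, and prove it is an equivalence by exhibiting a quasi-inverse (equivalently, by checking full faithfulness and essential surjectivity). The dimension formula~\eqref{intro-DimCA} -- which specialises to $\Dim\mcC_\opA=\Dim\mcC/\dim(A)^2$ in case~\ref{eg2} and to $\Dim\mcC_\opA=(\Dim\mcS)^2$ in case~\ref{eg1} -- then matches $\Dim\mcC_A^\loc$ (by \cite{KO}) resp.\ $\Dim\mcZ(\mcS)$, and serves as a consistency check throughout, although the equivalences will be established directly rather than through a dimension count.

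\emph{Case~\ref{eg2}.} Since $T=A$ here, for every $A$-$A$-bimodule $M$ there are canonical isomorphisms $M\otimes_A T\cong M$ and $T\otimes_{A,i}M\cong M$ of objects of $\mcC$, so that the structure maps $\tau_i,\overline{\tau_i}$ of an object of $\mcC_\opA$ become endomorphisms of the underlying object of $M$. I would first specialise the bimodule-map condition and the defining relations of $\mcC_\opA$ from Section~\ref{subsec:CA_defined} to the orbifold datum of example~\ref{eg2}, i.e.\ with $\alpha=\bar\alpha=\Delta\circ\mu$, $\psi=\id_A$ and $\phi=1$. I expect the relations to say precisely that (a) the left and right $A$-actions on $M$ are intertwined by the $\tau_i$ in such a way that $M$ is equivalently a single left $A$-module, the right action being recovered from the braiding (using commutativity of $A$), and (b) the leftover relations force $c_{A,M}\circ c_{M,A}=\id$, that is, $M$ is a \emph{local} $A$-module, with $\tau_1,\tau_2,\overline{\tau_1},\overline{\tau_2}$ the evident maps built from $c^{\pm1}$. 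This yields a functor $\mcC_\opA\to\mcC_A^\loc$, $(M,\tau\text{'s})\mapsto M$, together with an obvious quasi-inverse equipping a local module with its canonical $\tau$'s; full faithfulness is then immediate, since ``bimodule map commuting with the $\tau_i$'' unwinds to ``$A$-module map''. What remains is to check that the monoidal structure of $\mcC_\opA$ (underlying bimodule $M\otimes_A N$), its braiding, and its twist restrict to the standard structures on $\mcC_A^\loc$ of \cite{KO}; with $\psi=\id_A$ and $\phi=1$ this should be a short diagram chase.

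\emph{Case~\ref{eg1}.} Here $\mcC=\Vect$ and $A=\bigoplus_{i\in\mcI}\End_\mcS(i)\cong\opk^{\mcI}$, so an $A$-$A$-bimodule is the same datum as an $\mcI\times\mcI$-graded vector space $M=\bigoplus_{i,j\in\mcI}M_{ij}$, equivalently the $\Hom$-matrix $M_{ij}=\Hom_\mcS(j,F(i))$ of a unique $\opk$-linear endofunctor $F$ of the semisimple category $\mcS$; likewise $T=\bigoplus_{i,j,l\in\mcI}\mcS(l,i\otimes j)$ is the datum attached to the tensor bifunctor of $\mcS$. The plan is to unpack the data $\tau_1$, with the relations it obeys, as a strong (i.e.\ invertible, this being the role of $\overline{\tau_1}$) right $\mcS$-module functor structure $F(-)\otimes(=)\cong F(-\otimes=)$ on $F$, to unpack $\tau_2$ analogously as a left $\mcS$-module functor structure, and the mixed relation between $\tau_1$ and $\tau_2$ as the compatibility making $F$ an $\mcS$-$\mcS$-bimodule endofunctor. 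This identifies $\mcC_\opA$, as a category, with $\mathrm{Fun}_{\mcS|\mcS}(\mcS,\mcS)$, and the latter with $\mcZ(\mcS)$ via $F\mapsto F(\opid)$ equipped with the half-braiding built from the left and right module structures; morphisms in $\mcC_\opA$ correspond to bimodule-functor transformations, hence to morphisms in $\mcZ(\mcS)$. It then remains to transport the ribbon structure: composition of bimodule endofunctors corresponds to the tensor product $M\otimes_A N$ on the $\mcC_\opA$ side and to $\otimes$ in $\mcZ(\mcS)$ on the other; the unit $\opid_{\mcC_\opA}=A$ corresponds to $\Id_\mcS$, i.e.\ to the simple unit of $\mcZ(\mcS)$ -- in agreement with $\opA$ being a simple orbifold datum, even though $A$ is far from simple as a bimodule over itself -- and the braiding, duality and twist of $\mcC_\opA$ must be matched with those of $\mcZ(\mcS)$.

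The step I expect to be the main obstacle is, in both cases, this last one: identifying the braiding, the duality, and above all the ribbon twist. In case~\ref{eg1} it is precisely here that the non-trivial normalisations intervene -- the square-root factors $\psi(\id_i)=(\dim i)^{1/2}\,\id_i$ and $\phi=\Dim(\mcS)^{-1}$ have to conspire with the sphericality of $\mcS$ to reproduce the correct (left and right) duality and twist on $\mcZ(\mcS)$ -- so the verification comes down to a careful comparison of the (co)evaluations entering the ribbon structure of $\mcC_\opA$ with the Turaev-Viro-style formulas for duals and twists in $\mcZ(\mcS)$. In case~\ref{eg2} the analogous comparison is routine once the description of objects and morphisms is pinned down. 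As a by-product of these identifications one also reads off $\tr_\mcC\psi^4$ and the value of $\Dim\mcC_\opA$ in~\eqref{intro-DimCA}, consistently with $\Dim\mcC_A^\loc=\Dim\mcC/\dim(A)^2$ and $\Dim\mcZ(\mcS)=(\Dim\mcS)^2$.
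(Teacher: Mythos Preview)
Your strategy is sound in both cases, and for case~\ref{eg2} it coincides with the paper's argument: one constructs the functor $\mcC_A^\loc\to\mcC_\opA$ by equipping a local module with the evident $T$-crossings, full faithfulness is immediate, and essential surjectivity is the substantive step. The paper carries this out by showing that the $A$-$AA$-bimodule property of $\tau_i$ forces the left and right actions on $M$ to be related via the braiding (so $M$ is local), and then uses \eqrefT{1} and \eqrefT{3} to deduce that the endomorphisms $\widehat{\tau_i}$ are idempotent and invertible, hence equal to $\id_M$ --- which pins down the $T$-crossings uniquely. Your outline ``the relations force locality and the $\tau$'s are the canonical ones'' is exactly this, just not yet unpacked.

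For case~\ref{eg1} your route genuinely differs from the paper's. You propose to read an object of $\mcC_{\opA^\mcS}$ as a $\opk$-linear endofunctor of $\mcS$ together with compatible left and right $\mcS$-module structures, i.e.\ as an object of $\mathrm{Fun}_{\mcS|\mcS}(\mcS,\mcS)$, and then to invoke the standard equivalence of the latter with $\mcZ(\mcS)$. The paper instead builds an explicit intermediate category $\mcA(\mcS)$ whose objects are triples $(X,t^X,b^X)$ with $t^X:X(UV)\Rightarrow(XU)V$ and $b^X:X(UV)\Rightarrow U(XV)$ satisfying pentagon/hexagon-type diagrams, proves $\mcZ(\mcS)\cong\mcA(\mcS)$ and $\mcA(\mcS)\cong\mcC_{\opA^\mcS}$ separately, and composes. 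Your version is more conceptual and shorter on the linear-equivalence level; the paper's is more self-contained and keeps closer track of the $\psi^{-2}$-insertions hidden in $\alpha$ and in the tensor product of $\mcC_{\opA^\mcS}$ (these are absorbed by naturality of $\psi$ in \eqref{eq:psi_nat_transf}, which is why the auxiliary $\mcA(\mcS)$ has no $\psi$'s). Where the two approaches reconverge is precisely the point you flag as the main obstacle: matching the braiding and twist. The paper does this by an explicit graphical computation that collapses the braiding formula \eqref{eq:D_braiding} --- with its $T$-loop, its $\psi$-insertions, and the global factor $\phi=(\Dim\mcS)^{-1}$ --- down to post-composition with $\gamma_Y\otimes\id$, using a dimension identity $\sum_{i,j}|i|\,|j|\,N_{ij}^r=\Dim\mcS\cdot|r|$; in your framework the analogous cancellation would have to be produced when comparing the braiding on $\mathrm{Fun}_{\mcS|\mcS}(\mcS,\mcS)$ with that on $\mcC_{\opA^\mcS}$. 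Either way this is the step that carries the actual work.
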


This provides a unified proof of modularity of $\mathcal{Z}(\mathcal{S})$ and of $\mathcal{C}_A^\mathrm{loc}$.
In the third example, the evident conjecture is that  $\mathcal{C}_\mathbb{A} \cong \mathcal{B}^G$, but we do not treat this here.
Indeed, in this case $\phi = 1/|G|$, $\tr_\mcC \psi^4=|G|$ and the dimension formula~\eqref{intro-DimCA} gives $\Dim \mcC_\opA = \Dim(\mathcal{B}_e) \,  |G|^2$, as expected.
Further support is given in part 2 of the next remark.

\begin{rem}~
\label{rem:Witt_equiv}
\begin{enumerate}
	\setlength{\leftskip}{-1em}
\item
Examples 2 and 3 can also be obtained by a construction using Hopf monads developed in \cite{CZW}, but example 1 is in general not covered by that construction.		
\item
In \cite{KZ} an enriched version of the Drinfeld centre was introduced.\footnote{
We are grateful to David Penneys and David Reutter for bringing the enriched centre to our attention and for explaining the relation to $\mathcal{C}_\mathbb{A}$.}
Let $\mathcal{C}$, $\mathcal{D}$ be fusion categories and let $\mathcal{C}$ be in addition braided. Let $F: \mathcal{C} \to \mathcal{Z}(\mathcal{D})$ be a braided functor. In the case that $\mathcal{C}$ is a MFC, as we consider here, the enriched centre is  $F(\mathcal{C})'$, i.e.\ the commutant of the image of $\mathcal{C}$ in $\mathcal{Z}(\mathcal{D})$. The constructions 1--3 mentioned in the beginning are all instances of enriched centres. For case 1 this is trivial, and for cases 2 and 3 this is established by the following factorisations of Drinfeld centres
\begin{equation}
\mathcal{Z}(\mathcal{C}_A) \cong \mathcal{C}^{\mathrm{rev}} \boxtimes \mathcal{C}_A^\mathrm{loc}
\qquad , \qquad 
\mathcal{Z}(\mathcal{B}) \cong (\mathcal{B}_e)^{\mathrm{rev}} \boxtimes \mathcal{B}^G \ ,
\end{equation}
where $(-)^\mathrm{rev}$ refers to the category with inverse braiding and twist,
see \cite[Cor.\ 3.30]{DMNO} and \cite[Thm.\,2]{CGPW}.
It is therefore expected that $\mathcal{C}_\mathbb{A}$ is an enriched Drinfeld centre in general.
Conjecturally, the relevant category $\mathcal{D}$ which satisfies 
$\mathcal{Z}(\mathcal{D}) \cong \mathcal{C}^{\mathrm{rev}} \boxtimes \mathcal{C}_\mathbb{A}$ is defined similarly to $\mathcal{C}_\mathbb{A}$, but objects now are triples $(M,\tau_1,\overline{\tau_1})$ which correspondingly satisfy fewer conditions, cf.\ Remark~\ref{rem:D-Interface-Wilson}.
In particular, $\mathcal{D}$ itself is no longer braided.
This will be elaborated	in a separate paper \cite{M}.

Note that proving this equivalence would give as a corollary that $\mathcal{C}$ and $\mathcal{C}_\mathbb{A}$ are Witt equivalent \cite{DMNO}.
\end{enumerate}
\end{rem}

\subsection{Motivation from three-dimensional TQFT}
\label{subsec:TQFT_motivation}

Given a MFC $\mathcal{C}$, the Reshetikhin-Turaev construction \cite{RT2, Tu} provides a 3d TQFT $Z^\mathrm{RT}_\mathcal{C}$, i.e.\ a symmetric monoidal functor
\begin{equation}
Z_{\mathcal{C}}^{\mathrm{RT}} : 
\widehat{\mathrm{Bord}}{}_3(\mathcal{C})
\rightarrow 
\mathrm{Vect}_\Bbbk \ .
\end{equation}
The source category is that of three-dimensional bordisms with embedded $\mathcal{C}$-coloured ribbon graphs, and the hat denotes a certain extension needed to absorb a glueing anomaly, see \cite{Tu} for details.

One can extend $Z_{\mathcal{C}}^{\mathrm{RT}}$ to a larger bordism category $\widehat{\mathrm{Bord}}{}_3^\mathrm{def}(\mathcal{C})$ of stratified bordisms \cite{CRS2}, where the various strata are labelled by algebraic data in $\mathcal{C}$: 3-strata are unlabelled, or, equivalently, all labelled by the MFC $\mathcal{C}$; 2-strata, aka.\ surface defects, are labelled by $\Delta$-separable symmetric Frobenius algebras in $\mathcal{C}$ \cite{KS, FSV, CRS2}; 1-strata are labelled by (bi)modules over an appropriate tensor product of these algebras and 0-strata by the corresponding intertwiners, see \cite{CRS2} for details.

Starting from such a TQFT on stratified manifolds one can introduce the {\em generalised orbifold construction} \cite{CRS1,CRS3}. The idea is to carry out a state sum construction internal to the given TQFT. 
Roughly speaking, given an orbifold datum $\mathbb{A}$ one picks a simplicial decomposition of a bordism, passes to the Poincar\'e dual cell decomposition, and decorates each 2-stratum by $A$, each $1$-stratum by $T$ and each $0$-stratum by $\alpha$ or $\bar\alpha$, depending on orientations. Finally, in each $2$-cell one inserts
$\psi$ and in each 3-cell one inserts $\phi$. The conditions on $\mathbb{A}$ ensure that the value of $Z_{\mathcal{C}}^{\mathrm{RT}}$ on such a stratified bordism is independent of the choice of simplicial decomposition. It is shown in \cite{CRS3} that one obtains a new 3d TQFT, called the {\em generalised orbifold},
\begin{equation}
Z_{\mathcal{C}}^{\mathrm{orb},\mathbb{A}} : 
\widehat{\mathrm{Bord}}{}_3
\rightarrow 
\mathrm{Vect}_\Bbbk \ ,
\end{equation}
which, at least at this point, is only defined on bordisms without embedded  ribbon graphs.

The name ``generalised orbifold'' derives from the observation that example 3 is an actual orbifold by $G$, but that the same construction also covers examples 1 and 2. This is similar to the use of the term ``generalised symmetries'' in \cite{CZW}.

\medskip

\begin{figure}
\centering
\begin{subfigure}[b]{0.2\textwidth}
	\centering
	\pic[1.5]{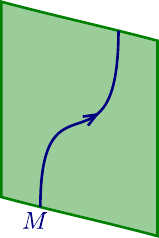}
	\caption{}
	\label{fig:obj}
\end{subfigure}
\begin{subfigure}[b]{0.5\textwidth}
	\centering
	\pic[1.5]{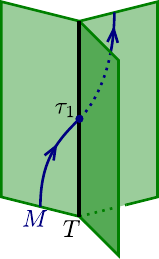} \pic[1.5]{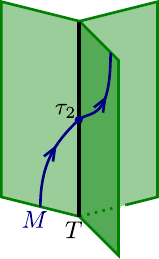}
	\caption{}
	\label{fig:Tcross}
\end{subfigure}\\
\begin{subfigure}[b]{1.0\textwidth}
	\centering
	\pic[1.5]{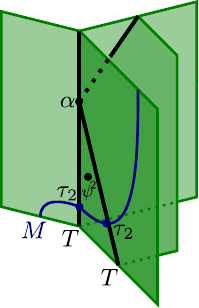} $=$ \pic[1.5]{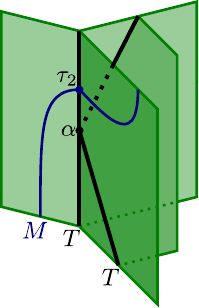}
	\caption{}
	\label{fig:axiom}
\end{subfigure}\\
\begin{subfigure}[b]{0.2\textwidth}
	\centering
	\pic[1.5]{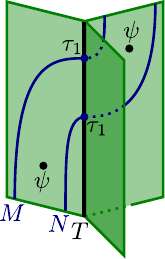}
	\caption{}
	\label{fig:prod}
\end{subfigure}
\begin{subfigure}[b]{0.4\textwidth}
	\centering
	\pic[1.5]{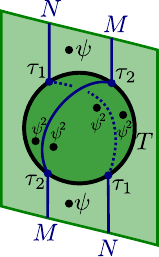} $\cdot ~\phi$
	\caption{}
	\label{fig:braiding}
\end{subfigure}
\caption{(a) Line defects in a surface defect are labelled by bimodules $M$. (b) $T$-crossings $\tau_1$, $\tau_2$ into adjacent surface defects. (c) Example of a compatibility condition between the $T$-crossings $\tau_i$ and $\alpha$. (d) Example of the $T$-crossing for the tensor product $M \otimes_A N$ of Wilson line defects. (e) Braiding of two Wilson line defects via a surface defect bubble.
\\
The various insertions of $\psi$ and $\phi$ are result from the orbifold construction and can be ignored at first.
 }
\label{fig:line_stuff}
\end{figure}

Two natural questions are now whether $Z_{\mathcal{C}}^{\mathrm{orb},\mathbb{A}}$ is equivalent to a Reshetikhin-Turaev type TQFT $Z_{\mathcal{C}'}^{\mathrm{RT}}$ for some other MFC $\mathcal{C}'$, and, if so, how to obtain $\mathcal{C}'$ from $\mathcal{C}$ and $\mathbb{A}$. The second question was the motivation to undertake the research presented here. Indeed, we conjecture that $\mathcal{C}' = \mathcal{C}_\mathbb{A}$, and will return to this question in a future publication.

The idea to extract $\mathcal{C}'$ is to investigate how to describe ribbon graphs -- or in other words Wilson lines and their junctions -- in the generalised orbifold TQFT. We will think of such ribbon graphs as line defects embedded in the 2-strata of orbifold stratification. By \cite{CRS2}, such line defects are given by $A$-$A$-bimodules (Figure~\ref{fig:line_stuff}\,a). The line defects need to be able to cross $T$-labelled 1-strata into adjacent 2-strata. Such junctions are described precisely by the data $\tau_i$, $\overline{\tau_i}$ (Figure~\ref{fig:line_stuff}\,b). To achieve independence of the initial simplicial decomposition, one has to be able to slide line defects across junction points of $T$-defects in various ways, for example as in Figure~\ref{fig:line_stuff}\,c. 
The tensor product is given by placing two line defects parallel to each other, and the $T$-crossings are described in this way, too (Figure~\ref{fig:line_stuff}\,d). Finally, the braiding is obtained by inserting a bubble on a 2-stratum and using this to make one line-defect pass over another (Figure~\ref{fig:line_stuff}\,e).

In the main text we will not use the connection between the algebraic definition of $\mathcal{C}_\mathbb{A}$ and line defects in the generalised orbifold TQFT, but the entire construction was found by exploiting this relation.

\medskip

This paper is organised as follows. In Section~\ref{sec:pre} we recall some basic definitions and facts about algebras and modules, and then give the definition of an orbifold datum $\opA$. Section~\ref{sec:Wilson} contains the
definition of the category $\mcC_\opA$ and of its ribbon structure, as well as our main theorem that $\mcC_\opA$ is a modular fusion category. In Section~\ref{sec:examples} we present two examples in detail:  local modules and the Drinfeld centre. 
Some useful identities to work with the category $\mcC_\opA$ and a variant of the monadicity theorem and  are collected in the Appendix.

\subsubsection*{Acknowledgements}

We would like to thank
Nils Carqueville,
David Penneys,
David Reutter,
Iordanis Romaidis,
Gregor Schaumann,
Daniel Scherl,
Christoph Schweigert, and
Zhenghan Wang
for helpful discussions.
We are grateful to Nils Carqueville for his careful reading of a draft of this paper.
VM is supported by the Deutsche Forschungsgemeinschaft (DFG) via the Research Training Group RTG~1670.
IR is partially supported by the DFG via the RTG~1670 and the Cluster of Excellence EXC~2121.

\section{Orbifold data}\label{sec:pre}

In this section we will recall the definition of an orbifold datum from \cite{CRS3}. To do so, we first list our conventions for spherical and modular fusion categories and summarise the definition of Frobenius algebras and their modules in a tensor category.

\subsection{Conventions}
\label{subsec:conventions}
Let $\opk$ be an algebraically closed field.
For a spherical fusion category $\mcS$ over $\opk$, an object $X\in\mcS$ and a morphism $f:X\ra X$, 
denote by $\tr_\mcS f\in\opk$ the trace of $f$ and by $|X|_\mcS = \tr_\mcS \id_X$ the categorical dimension of $X$.
$\Irr_\mcS$ will always denote a set of representatives of isomorphism classes of simple objects of $\mcS$ and we will assume that the tensor unit $\opid\in\mcS$ is in $\Irr_\mcS$.
The \it{dimension} of $\mcS$ is the sum 
$\Dim \mcS := \sum_{k\in\Irr_\mcS} (|k|_\mcS)^2$.
For $k \in \Irr_\mcS$ we have $|k|_\mcS \neq 0$, see \cite[Prop.\ 4.8.4]{EGNO}. If in addition the characteristic of $\opk$ is zero, then automatically also $\Dim \mcS \neq 0$  \cite[Thm.\ 2.3]{ENO}, but in non-zero characteristic it can happen that $\Dim \mcS = 0$ \cite[Sec.\ 9.1]{ENO}.

Throughout the entire paper we will extensively use string diagram notation.
Diagrams are to be read from bottom to top, and a downwards directed strand represents the dual of an object.
The evaluation/coevaluation morphisms of an object $X\in\mcS$ will be denoted by:
\begin{align}
&[\ev_X: X^* \otimes X \ra \opid] = \pic[1.5]{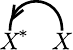},
&&[\coev_X: \opid \ra X \otimes X^*] = \pic[1.5]{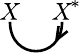}, \nonumber\\
&[\evt_X: X \otimes X^* \ra \opid] = \pic[1.5]{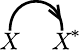},
&&[\coevt_X: \opid \ra X^* \otimes X] = \pic[1.5]{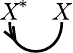}.
\end{align}
Labels for objects and morphisms will be omitted whenever they are clear from the context.

Let $\mcC$ be a braided fusion category.
For all $X,Y\in\mcC$, the braiding morphisms $c_{X,Y}: X \otimes Y \ra Y \otimes X$, and their inverses will be depicted by:
\begin{equation}
c_{X,Y}       = \pic[1.5]{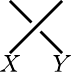}~, \qquad
c^{-1}_{X,Y} = \pic[1.5]{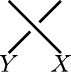}~.
\end{equation}

A braided spherical fusion category $\mcC$ is automatically ribbon (see e.g.\ \cite[Lem.4.5]{TuVi}), with the twist morphism of an object $X\in\mcC$ built out of braiding and duality morphisms as
\begin{equation}
\label{eq:twist_def}
\theta_X := \pic[1.5]{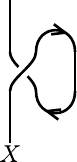} = \pic[1.5]{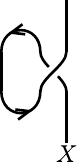}, \quad
\theta^{-1}_X := \pic[1.5]{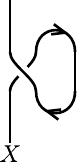} = \pic[1.5]{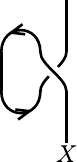}.
\end{equation}

A ribbon fusion category is also called \it{premodular}.
A premodular category $\mcC$ is called \it{modular} if the matrix
\begin{equation}
s_{i,j} := 
\tr_\mcS( c_{j,i} \circ c_{i,j} ) =
\pic[1.5]{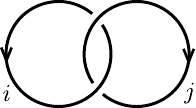} ~, \qquad i,j\in\Irr_\mcC
\end{equation}
is invertible.

We fix a modular fusion category $\mcC$ over $\opk$.
For notational simplicity, $\mcC$ will be assumed to have strict monoidal and pivotal structures (without loss of generality \cite{ML,NS}), 
and the symbol $\otimes$ for the monoidal product will sometimes be omitted.
\subsection{Frobenius algebras and their modules}
\label{subsec:FAs_and_mods}

In this section we briefly recall the notion of Frobenius algebras and their modules in $\mcC$, more details can be found e.g.\ in \cite{FRS1}.

A \it{Frobenius algebra} in $\mcC$ is a tuple
\begin{equation}
A\in\mcC,
\quad \mu:A\otimes A \ra A, \quad \eta:\opid \ra A,
\quad\D: A \ra A \otimes A, \quad \vareps: A \ra \opid,
\end{equation}
where $(A,\mu,\eta)$ is an associative unital algebra and $(A,\D,\vareps)$ is a coassociative counital coalgebra, such that
\footnote{In the online version, $A$-coloured strands are drawn in green.}
\begin{equation}
\pic[1.5]{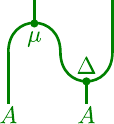} = \pic[1.5]{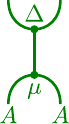} = \pic[1.5]{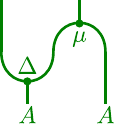}.
\end{equation}
The unit (resp.\ counit) will be denoted by \pic[1.5]{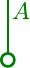} (resp.\ \pic[1.5]{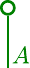}).
A Frobenius algebra is called
\begin{equation}
\label{eq:D_sep_and_symm}
\text{\it{symmetric} if }
\pic[1.5]{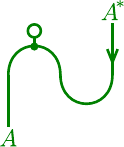} = \pic[1.5]{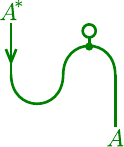}
\text{ and $\D$-\it{separable} if }
\pic[1.5]{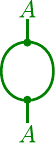} = \pic[1.5]{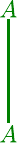}.
\end{equation}

A (\it{left-}) \it{module} of a Frobenius algebra $A$ is a module $(M\in\mcC, ~\l: A \otimes M \ra M)$ of the underlying algebra.
It is simultaneously a comodule with the coaction given by $[M \xra{(\D\circ\eta) \otimes \id_M} A \otimes A \otimes M \xra{\id_A\otimes\l} A \otimes M]$, or in graphical notation
\begin{equation}
\pic[1.5]{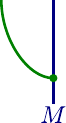} := \pic[1.5]{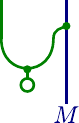}.
\end{equation}
One easily generalises this to right modules and bimodules of a Frobenius algebra.

Let $\mcACA$ be the category of $A$-$A$-bimodules.
As usual, it is a tensor category with the monoidal product $M \otimes_A N$ of $M,N \in \mcACA$ given by
\begin{equation}
M \otimes_A N := \coker[M\otimes A \otimes N \xra{\rho_M \otimes \id_N - \id_M \otimes \l_N} M \otimes N],
\end{equation}
where $\rho_M$ and $\l_N$ are the corresponding right and left actions.
By \cite[Prop.\,5.24, Rem.\,5.25]{FS}\footnote{
	\cite{FS} uses a \it{special} Frobenius algebra $A$, which is the same as $\D$-separable, but with the extra assumption $\dim_\mcC A \neq 0$; for this particular result, this assumption is not necessary.}
we have:
\begin{prp}
\label{prp:ACA_is_ssi}
For $A\in\mcC$ a $\D$-separable Frobenius algebra, $\mcACA$ is a finitely semisimple
\footnote{
By ``finitely semisimple'' we mean that there are finitely many isomorphism classes of simple objects, and that each object is isomorphic to a finite direct sum of simple objects.\label{fn:finitely-ssi}
}	 
monoidal category.
\end{prp}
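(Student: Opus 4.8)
\section*{Proof proposal}

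The monoidal structure is the standard one and is not where the difficulty lies: since $\mcC$ is abelian with exact monoidal product, the coequaliser defining $M\otimes_A N$ exists and carries a canonical $A$-$A$-bimodule structure, the associativity and unit constraints are inherited from those of $\mcC$, and $A$ is the monoidal unit; none of this uses $\D$-separability. It also costs nothing to record at the outset that $\mcACA$ is abelian with kernels and cokernels computed in $\mcC$, that the forgetful functor $U\colon\mcACA\ra\mcC$ is exact and faithful (hence conservative), and that $\Hom$-spaces in $\mcACA$ are finite-dimensional, being subspaces of the corresponding $\Hom$-spaces in the fusion category $\mcC$. So the real content of the statement is that $\mcACA$ is \emph{semisimple, has finitely many isomorphism classes of simple objects, and all of its objects have finite length}.

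The plan is to realise every bimodule as a retract, inside $\mcACA$, of a free one. Let $\Ind\colon\mcC\ra\mcACA$ be the induction functor $\Ind X = A\otimes X\otimes A$, with left action $\mu\otimes\id_X\otimes\id_A$ and right action $\id_A\otimes\id_X\otimes\mu$; it is left adjoint to $U$, the counit $\pi_M\colon \Ind(UM)\ra M$ being $\l_M\circ(\id_A\otimes\rho_M)$, where $\l_M,\rho_M$ are the left and right actions on $M$. On the other hand, since $A$ is Frobenius, every $M\in\mcACA$ carries commuting left and right $A$-coactions assembled from the coproduct $\D$ and the two actions (this is the comodule structure recalled in Section~\ref{subsec:FAs_and_mods}), and composing these two coactions produces a morphism $\iota_M\colon M\ra A\otimes M\otimes A$ which, by the Frobenius relations, is a morphism in $\mcACA$. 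A short string-diagram computation using $\D$-separability, $\mu\circ\D=\id_A$, then shows that $\pi_M\circ\iota_M=\id_M$. Thus every $M\in\mcACA$ is a direct summand, in $\mcACA$, of $\Ind(UM)$; and since $\mcC$ is semisimple, $UM\cong\bigoplus_k i_k$ with $i_k\in\Irr_\mcC$, so $M$ is a direct summand of $\bigoplus_k\Ind(i_k)$.

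Semisimplicity now follows formally. Every object of the fusion category $\mcC$ is projective, and $\Ind$, being left adjoint to the exact functor $U$, preserves projectives; hence each $\Ind X$ is projective in $\mcACA$, and therefore so is every bimodule, being a retract of such an object. An abelian category in which every object is projective is semisimple, i.e.\ every short exact sequence splits. For the finiteness statements: each simple object of $\mcACA$ is a direct summand of some $\bigoplus_k\Ind(i_k)$, hence (by semisimplicity) of a single $\Ind(i)$ with $i\in\Irr_\mcC$; since $\Irr_\mcC$ is finite and each $\End_{\mcACA}(\Ind i)\cong\Hom_\mcC(i,A\otimes i\otimes A)$ is finite-dimensional, $\Ind i$ is a finite direct sum of simple bimodules, so $\mcACA$ has only finitely many isomorphism classes of simple objects. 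Finally, for arbitrary $M\in\mcACA$ the object $UM$ has finite length in $\mcC$, so $M$ has finite length in $\mcACA$ (as $U$ is exact and conservative), and being semisimple it is a finite direct sum of simple bimodules. Hence $\mcACA$ is finitely semisimple.

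The one nonroutine step is the identity $\pi_M\circ\iota_M=\id_M$ together with the verification that $\iota_M$ is a bimodule map: this is exactly the place where the Frobenius structure and $\D$-separability enter, and it is a brief but slightly fiddly diagrammatic manipulation (note that symmetry of $A$ is not needed). Abstractly, it amounts to the observation that $U\colon\mcACA\ra\mcC$ is a Frobenius functor because $A$ is Frobenius and a separable functor because $A$ is $\D$-separable; a faithful exact separable functor from $\mcACA$ to the semisimple category $\mcC$ then forces $\mcACA$ to be semisimple.
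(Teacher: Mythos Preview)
Your argument is correct. The paper itself does not prove this proposition but simply cites \cite[Prop.\,5.24, Rem.\,5.25]{FS}; your write-up supplies precisely the standard proof one finds there, namely: use the Frobenius coactions to build a section $\iota_M$ of the action map $\pi_M\colon A\otimes M\otimes A\to M$, invoke $\Delta$-separability for $\pi_M\circ\iota_M=\id_M$, and conclude that every bimodule is a summand of a free one, whence projectivity of all objects and finiteness follow from the corresponding properties of $\mcC$. There is nothing to add or correct.
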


If $A$ is a $\D$-separable Frobenius algebra, $M\otimes_A N$ is isomorphic to the image of the idempotent
\begin{equation}
\label{eq:MxAN_idemp}
\pic[1.5]{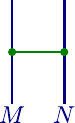} := \pic[1.5]{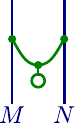}.
\end{equation}
The projection $\pi:M\otimes N \ra M\otimes_A N$ has a section $\imath$ in $\mcACA$, and $\pi$ and $\imath$ then split the idempotent in \eqref{eq:MxAN_idemp}. The graphical notation we will use is
\begin{equation}
\pi = \pic[1.5]{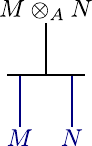}
\quad\text{and}\quad
\imath = \pic[1.5]{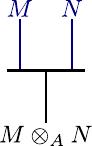}.
\end{equation}
For any object $X\in\mcC$, a morphism $f:X \ra M \otimes_A N$ or $g: M \otimes_A N \ra X$ can be uniquely given by morphisms $\widehat{f}:X \ra M \otimes N$, $\widehat{g}: M \otimes N \ra X$, such that
\begin{equation}
\label{eq:proj_swallow}
\pic[1.5]{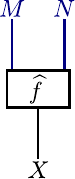} = \pic[1.5]{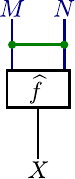}, \qquad
\pic[1.5]{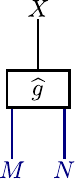} = \pic[1.5]{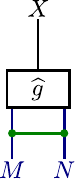},
\end{equation}
namely $\widehat{f} := \imath \circ f$ and $\widehat{g} := g \circ \pi$.
By abuse of notation, the overhats like in \eqref{eq:proj_swallow} will be omitted in the following.

If $A,B\in\mcC$ are $\D$-separable Frobenius algebras, so is $A\otimes B$, where we equip it with
\begin{equation}
\text{product:}    \pic[1.5]{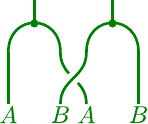},
\text{ coproduct:} \pic[1.5]{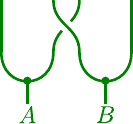},
\text{ unit:}	   \pic[1.5]{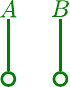},
\text{ counit:}	   \pic[1.5]{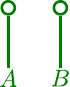}.
\end{equation}
An $A\otimes B$ module $M$ is the same as a simultaneous $A$ and $B$ module such that
\begin{equation}
\label{eq:AxB_mod_comm_acts}
\pic[1.5]{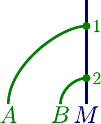} = \pic[1.5]{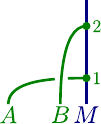}
\ .
\end{equation}
Here and for the rest of the paper we indicate the action of the first and second
tensor factor by indices $1,2$.

Let $M$ be a right $AB$-module and $K$, $L$ be left $A$- and $B$-modules respectively.
We define the following partial tensor products
\begin{equation}
M \otimes_1 K := \im \pic[1.5]{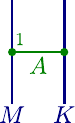}, \quad
M \otimes_2 L := \im \pic[1.5]{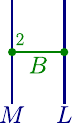},
\end{equation}
where the horizontal lines denote the idempotents as in \eqref{eq:MxAN_idemp}.
Note that $M\otimes_1 K$ (resp.\ $M\otimes_2 L$) is a right $B$-module (resp.\ right $A$-module).

We will often encounter $A$-$AA$-bimodules and their partial tensor products.
In this case, the left action will be indicated by $0$ whenever it is necessary to avoid ambiguity.
The two right actions will be distinguished by indices $1$, $2$, like in \eqref{eq:AxB_mod_comm_acts}.
For an $A$-$AA$-bimodule $M$, the dual $M^*$ is an $AA$-$A$-bimodule, $M\otimes_1 M$, $M \otimes_2 M$ are $A$-$AAA$ bimodules, $M^* \otimes_0 M$ is an $AA$-$AA$-bimodule, etc.
\subsection{Orbifold data for a MFC}
\label{subsec:orb_data}
\begin{figure}
	\captionsetup[subfigure]{labelformat=empty}
	\centering
	\begin{subfigure}[b]{0.5\textwidth}
		\centering
		\pic[1.5]{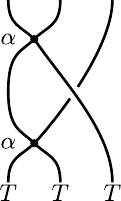}$=$\pic[1.5]{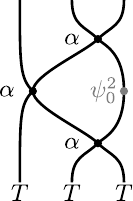}
		\caption{}
		\label{eq:O1}
	\end{subfigure}\hspace{-2em}\raisebox{5.5em}{(O1)}\\
	\begin{subfigure}[b]{0.4\textwidth}
		\centering
		\pic[1.5]{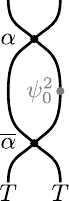}$=$\pic[1.5]{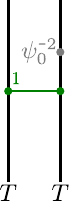}
		\caption{}
		\label{eq:O2}
	\end{subfigure}\hspace{-2em}\raisebox{5.5em}{(O2)}
	\begin{subfigure}[b]{0.4\textwidth}
		\centering
		\pic[1.5]{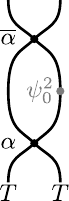}$=$\pic[1.5]{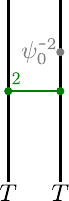}
		\caption{}
		\label{eq:O3}
	\end{subfigure}\hspace{-2em}\raisebox{5.5em}{(O3)}\\
	\begin{subfigure}[b]{0.4\textwidth}
		\centering
		\pic[1.5]{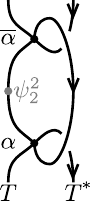}$=$\pic[1.5]{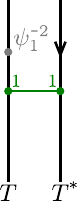}
		\caption{}
		\label{eq:O4}
	\end{subfigure}\hspace{-2em}\raisebox{5.5em}{(O4)}
	\begin{subfigure}[b]{0.4\textwidth}
		\centering
		\pic[1.5]{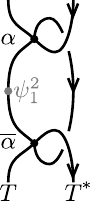}$=$\pic[1.5]{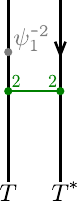}
		\caption{}
		\label{eq:O5}
	\end{subfigure}\hspace{-2em}\raisebox{5.5em}{(O5)}\\
	\begin{subfigure}[b]{0.4\textwidth}
		\centering
		\pic[1.5]{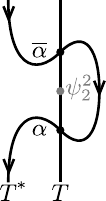}$=$\pic[1.5]{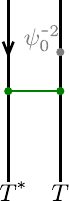}
		\caption{}
		\label{eq:O6}
	\end{subfigure}\hspace{-2em}\raisebox{5.5em}{(O6)}
	\begin{subfigure}[b]{0.4\textwidth}
		\centering
		\pic[1.5]{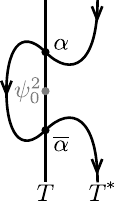}$=$\pic[1.5]{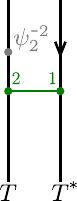}
		\caption{}
		\label{eq:O7}
	\end{subfigure}\hspace{-2em}\raisebox{5.5em}{(O7)}\\
	\begin{subfigure}[b]{0.8\textwidth}
		\centering
		\pic[1.5]{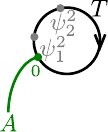}$=$\pic[1.5]{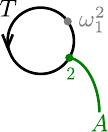}$=$\pic[1.5]{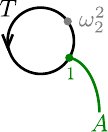}$=$\pic[1.5]{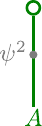}$\cdot \phi^{-1}$
		\caption{}
		\label{eq:O8}
	\end{subfigure}\hspace{-2em}\raisebox{4.25em}{(O8)}

\vspace*{-1em}

	\caption{Identities an orbifold datum has to satisfy. 
		All these string diagrams are drawn in $\mcC$, but by the comment below \eqref{eq:proj_swallow} they induce identities also between the appropriate tensor products over $A$ in $\mcACA$.
		\\
		To manipulate expressions involving orbifold data it is best to first ignore all the actions of $\psi$, which is why we draw them in grey.}
	\label{fig:orb_ids}
\end{figure}
Here we recall the definition of an orbifold datum as given in 
\cite[Def.\,3.4]{CRS3} 
(where it is called a ``special orbifold datum'').
\begin{defn}
\label{def:orb_data}
An \it{orbifold datum} in $\mcC$ is a tuple $\opA = (A, T, \a, \abar, \psi, \phi)$, where
\begin{itemize}
\item $A$ is a symmetric $\D$-separable Frobenius algebra in $\mcC$;
\item $T$ is an $A$-$AA$ bimodule in $\mcC$;
\item $\a:T \otimes_2 T \ra T \otimes_1 T$, $\abar:T \otimes_1 T \ra T \otimes_2 T$ are $A$-$AAA$ bimodule morphisms;
\item $\psi:A \ra A$ is an invertible $A$-$A$-bimodule morphism;
\item $\phi \in \opk^\times$;
\end{itemize}
These are subject to conditions \eqrefO{1}--\eqrefO{8} in Figure \ref{fig:orb_ids}, where the following notation is used:
\begin{equation}
\pic[1.5]{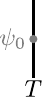}   := \pic[1.5]{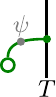}, 	\quad
\pic[1.5]{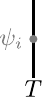}   := \pic[1.5]{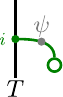}, 	\quad
\pic[1.5]{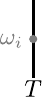} := \pic[1.5]{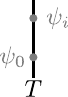},   \quad
i = 1,2.
\end{equation}
\end{defn}

As mentioned in the introduction, \cite{CRS3} provides three examples of orbifold data.
We will look at two of them in detail in Section \ref{sec:examples}.
\section{The category of Wilson lines}\label{sec:Wilson}
In this section, we give the main definition of this paper and prove our main result, namely we define the category $\mcC_\opA$ of Wilson lines and show that it is a modular fusion category.

\medskip

For the remainder of this section, 
we fix a MFC $\mcC$ and an orbifold datum $\opA = (A, T, \a, \abar, \psi, \phi)$  in $\mcC$.
\subsection{Definition of $\mcC_\opA$ as linear category}
\label{subsec:CA_defined}
\begin{figure}
	\captionsetup[subfigure]{labelformat=empty}
	\centering
	\begin{subfigure}[b]{0.45\textwidth}
		\centering
		\pic[1.5]{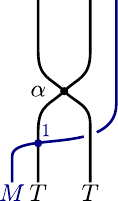}$=$\pic[1.5]{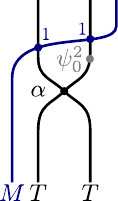}
		\caption{}
		\label{eq:T1}
	\end{subfigure}\hspace{-2em}\raisebox{5.5em}{(T1)}
	\begin{subfigure}[b]{0.45\textwidth}
		\centering
		\pic[1.5]{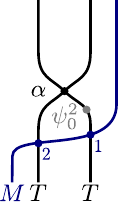}$=$\pic[1.5]{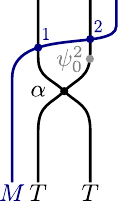}
		\caption{}
		\label{eq:T2}
	\end{subfigure}\hspace{-2em}\raisebox{5.5em}{(T2)}\\
	\begin{subfigure}[b]{0.45\textwidth}
		\centering
		\pic[1.5]{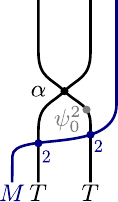}$=$\pic[1.5]{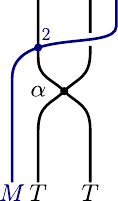}
		\caption{}
		\label{eq:T3}
	\end{subfigure}\hspace{-2em}\raisebox{5.5em}{(T3)}\\
	\begin{subfigure}[b]{0.35\textwidth}
		\centering
		\pic[1.5]{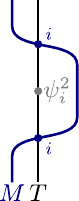}$=$\pic[1.5]{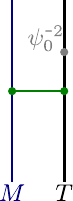}
		\caption{}
		\label{eq:T4}
	\end{subfigure}\hspace{-1em}\raisebox{5.5em}{(T4)}
	\begin{subfigure}[b]{0.35\textwidth}
		\centering
		\pic[1.5]{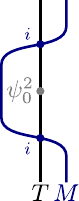}$=$\pic[1.5]{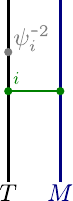}
		\caption{}
		\label{eq:T5}
	\end{subfigure}\hspace{-1em}\raisebox{5.5em}{(T5)}\hspace{4em}\raisebox{5.5em}{$i=1,2$}\\
	\begin{subfigure}[b]{0.35\textwidth}
		\centering
		\pic[1.5]{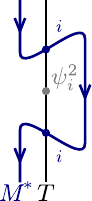}$=$\pic[1.5]{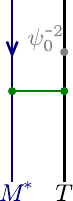}
		\caption{}
		\label{eq:T6}
	\end{subfigure}\hspace{-1em}\raisebox{5.5em}{(T6)}
	\begin{subfigure}[b]{0.35\textwidth}
		\centering
		\pic[1.5]{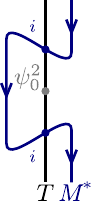}$=$\pic[1.5]{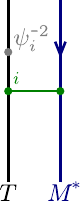}
		\caption{}
		\label{eq:T7}
	\end{subfigure}\hspace{-1em}\raisebox{5.5em}{(T7)}\hspace{4em}\raisebox{5.5em}{$i=1,2$}
	\caption{Identities for an object of $\mcC_\opA$.}
	\label{fig:obj_ids}
\end{figure}
\begin{defn}
\label{def:CA}
Define the category $\mcC_\opA$ to have:
\begin{itemize}
\item
\it{Objects}: tuples $(M, \tau_1, \tau_2, \overline{\tau_1}, \overline{\tau_2})$, where
\begin{itemize}
\item $M$ is an $A$-$A$ bimodule;
\item
$\tau_1: M \otimes_0 T \ra T \otimes_1 M$,
$\tau_2: M \otimes_0 T \ra T \otimes_2 M)$,
$\overline{\tau_1}: T \otimes_1 M \ra M \otimes_0 T$,
$\overline{\tau_2}: T \otimes_2 M \ra M \otimes_0 T$
are $A$-$AAA$-bimodule morphisms, denoted by
\begin{equation}
\tau_i = \pic[1.5]{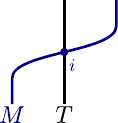}, \qquad
\overline{\tau_i} := \pic[1.5]{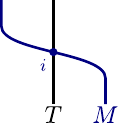}, \qquad i = 1,2 ~,
\end{equation}
such that the identities in Figure \ref{fig:obj_ids} are satisfied.
(Recall from the end of Section~\ref{subsec:FAs_and_mods} that the notation $\otimes_0$ refers to the left-$A$-action on $T$.)
\end{itemize}
\item \it{Morphisms}: 
A morphism
$f: (M, \tau_1^M, \tau_2^M, \overline{\tau_1^M}, \overline{\tau_2^M}) \ra (N, \tau_1^N, \tau_2^N, \overline{\tau_1^N}, \overline{\tau_2^N})$ is an $A$-$A$-bimodule morphism $f: M \ra N$, such that 
$\tau_i^N\circ (f \otimes_0 \id_T) = (\id_T \otimes_i f)\circ \tau_i^M $, 
$i = 1,2$, or graphically
\begin{equation}
\label{eq:M} \tag{M}
\pic[1.5]{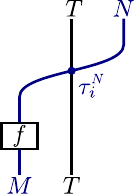} = \pic[1.5]{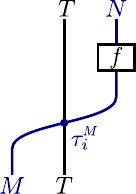}, \quad i=1,2 ~.
\end{equation}
\end{itemize}
Note that \eqrefT{4} and \eqrefT{5} imply that $\overline{\tau_i}$ is uniquely determined by $\tau_i$.
We will refer to the morphisms $\tau_1$, $\tau_2$ as \it{$T$-crossings} and to $\overline{\tau_1}$, $\overline{\tau_2}$ as their \it{pseudo-inverses}.
\end{defn}
\begin{example}
\label{ex:obj_A}
For each $\l\in\opZ$, $(A, \tau_1^\l, \tau_2^\l, \overline{\tau_1^\l}, \overline{\tau_2^\l})$ is an object of $\mcC_\opA$, where the $T$-crossings are
\begin{equation}
\tau_i^\l := \pic[1.5]{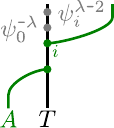}, \qquad
\overline{\tau_i^\l} := \pic[1.5]{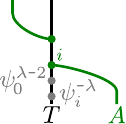}, \qquad
i = 1,2.
\end{equation}
Moreover, all these objects are isomorphic in $\mcC_\opA$.
Indeed, define a morphism
$f: (A, \tau_1^\l,  \tau_2^\l,  \overline{\tau_1^\l},  \overline{\tau_2^\l})
\ra (A, \tau_1^\mu, \tau_2^\mu, \overline{\tau_1^\mu}, \overline{\tau_2^\mu})$
to be the bimodule map $f = \psi^{\mu - \l}$.
For $i=1,2$ one has:
\begin{equation*}
\pic[1.5]{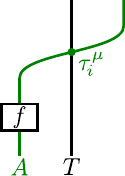}
= \pic[1.5]{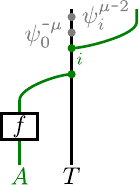}
= \pic[1.5]{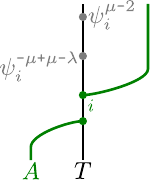}
= \pic[1.5]{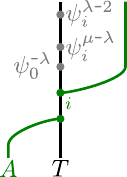}
= \pic[1.5]{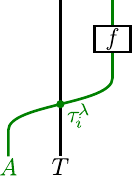},
\end{equation*}
i.e.\ $f$ is an invertible morphism in $\mcC_\opA$.
\end{example}

For $M\in\mcACA$, the following notation will be used to handle $\psi$-insertions:
\begin{equation}
\pic[1.5]{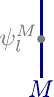} := \pic[1.5]{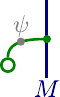}, \quad
\pic[1.5]{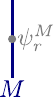} := \pic[1.5]{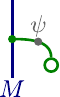}, \quad
\pic[1.5]{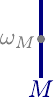} := \pic[1.5]{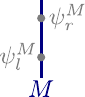}.
\end{equation}
By abuse of notation, the label $M$ in $\psi_l^M$, $\psi_r^M$, $\omega_M$ will be omitted whenever it is clear from the context.

Definitions \ref{def:orb_data} and \ref{def:CA} imply a large number of additional algebraic identities, which will be used in computations below.
We list several of them in Appendix \ref{app:identities}.
\subsection{$\mcC_\opA$ as ribbon category}
We equip $\mcC_\opA$ with the following monoidal structure:
\begin{itemize}
\item
\it{product:}
\begin{equation*}
(M, \tau_1^M, \tau_2^M, \overline{\tau_1^M}, \overline{\tau_2^M}) \otimes
(N, \tau_1^N, \tau_2^N, \overline{\tau_1^N}, \overline{\tau_2^N}) :=
(M \otimes_A N, \tau^{M,N}_1, \tau^{M,N}_2, \overline{\tau^{M,N}_1}, \overline{\tau^{M,N}_2}),
\end{equation*}
where the $T$-crossings are:
\begin{equation}\label{eq:tau^MN_i}
\tau^{M,N}_i            := \pic[1.5]{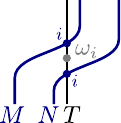}, \quad
\overline{\tau^{M,N}_i} := \pic[1.5]{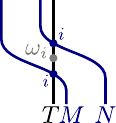}, \qquad i=1,2 ~;
\end{equation}
\item
\it{unit:} $\opid_{\mcC_\opA} = (A, \tau_1^1, \tau_2^1, \overline{\tau_1^1}, \overline{\tau_2^1})$ from Example \ref{ex:obj_A} (with the choice $\l=1$), i.e.\ with the $T$-crossings
\begin{equation}
\label{eq:unit_T-cross}
\pic[1.5]{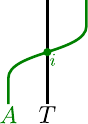} := \pic[1.5]{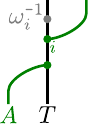}, \quad
\pic[1.5]{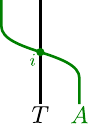} := \pic[1.5]{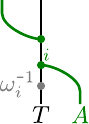}, \qquad i=1,2 ~;
\end{equation}
\item
\it{associators and unitors:} as in $\mcACA$.
\end{itemize}
One checks that $\tau^{M,N}$ satisfies the conditions \eqrefT{1}--\eqrefT{7} of a $T$-crossing.
For example, to check \eqrefT{1} for $\tau^{M,N}_1$, one first applies \eqrefT{1} for $\tau^{M}_1$ and $\tau^{N}_1$ separately. This generates two insertions of $\psi_0^2$, one of which combines with $\omega_1$ in \eqref{eq:tau^MN_i} into the two insertions of $\omega_1$ required for the two copies of $\tau^{M,N}_1$. Note that \eqrefT{1} would fail without the $\omega_i$ in \eqref{eq:tau^MN_i}.

The remaining conditions for $\mcC_\opA$ to be a monoidal category follow from those in $\mcACA$.
\begin{defn}
\label{def:simple_orb}
We call an orbifold datum $\opA$ in $\mcC$ \it{simple} if $\dim\End_{\mcC_\opA} \left(\opid_{\mcC_\opA} \right) = 1$.
\end{defn}

From now on, we will omit the $T$-crossings when referring to an object of $\mcC_\opA$.
If $M$ is an object of $\mcC_\opA$, so is the dual bimodule $M^*$, where the $T$-crossings are
\begin{equation}
\pic[1.5]{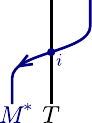}
:= \pic[1.5]{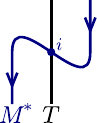}, \quad
\pic[1.5]{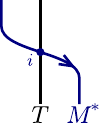}
:= \pic[1.5]{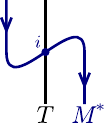} \qquad i=1,2
\end{equation}
(note that $\psi_l^{M^*} = (\psi_r^M)^*$ and $\psi_r^{M^*} = (\psi_l^M)^*$).
$M^*$ is a left and right dual of $M$ simultaneously, with evaluation/coevaluation morphisms given by:
\begin{align}
&[\ev_M: M^* \otimes_A M \ra A] := \pic[1.5]{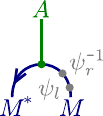},
&&[\coev_M: A \ra M \otimes_A M^*] := \pic[1.5]{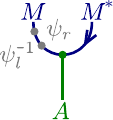}, \nonumber \\
\label{eq:evs_evts_in_D} 
&[\evt_M: M \otimes_A M^* \ra A] := \pic[1.5]{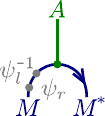},
&&[\coevt_M: A \ra M^* \otimes_A M] := \pic[1.5]{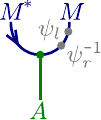}.
\end{align}
The various insertions of $\psi_l$ and $\psi_r$ are necessary to make the dualities into morphisms in $\mcC_\opA$.
The four zig-zag identities follow from those in $\mcACA$, the $\psi$-insertions cancel each other in each case.
The above duality morphisms equip $\mcC_\opA$ with pivotal structure (for that it is enough to check that the identities in \cite[Lem.\ 2.12]{CR} hold).

For a pair of objects $M,N \in \mcC_\opA$, define the morphisms
$c_{M,N}:M \otimes_A N \ra N \otimes_A M$ and $c^{-1}_{M,N}:N \otimes_A M \ra M \otimes_A N$
in $\mcACA$ as follows:
\begin{equation}
\label{eq:D_braiding}
c_{M,N} := \pic[1.5]{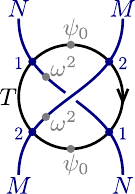} \cdot \phi
\ , \qquad
c^{-1}_{M,N} := \pic[1.5]{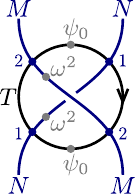} \cdot \phi
\ .
\end{equation}
That the notation $c^{-1}_{M,N}$ is indeed justified is part of the claim in Proposition~\ref{prop:c_MN-is-a-braiding} below.

\begin{lem}
\label{lem:br_ids}
For all $M,N\in\mcC_\opA$, the following identities hold:
\begin{equation}
\label{eq:br_ids}
\pic[1.5]{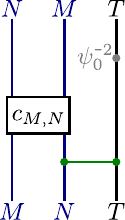} = \pic[1.5]{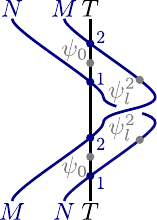}, \qquad
\pic[1.5]{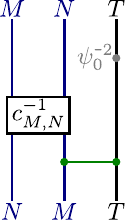} = \pic[1.5]{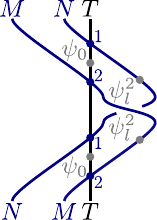}.
\end{equation}
\end{lem}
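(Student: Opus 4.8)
The plan is to prove Lemma \ref{lem:br_ids} by reducing the two claimed identities to the defining axioms of an object of $\mcC_\opA$ (conditions \eqrefT{1}--\eqrefT{7}), together with the orbifold-datum axioms \eqrefO{1}--\eqrefO{8} and the bookkeeping rules for $\psi$-insertions collected after Definition \ref{def:CA}. Both sides of each equation in \eqref{eq:br_ids} are morphisms built from the $T$-crossings $\tau_i^M$, $\tau_i^N$ (and possibly their pseudo-inverses $\overline{\tau_i}$), the braiding-type structure $\a$, $\abar$ on $T$, and a controlled number of insertions of $\psi_0$, $\psi_i$, $\omega_i$. The strategy in each case is to start from one side, peel the $T$-crossings off one strand at a time using \eqrefT{5} (to introduce or remove a pseudo-inverse) and \eqrefT{1}--\eqrefT{3} (to commute a $T$-crossing past $\a$ or $\abar$), and to collect all $\psi$-factors at the end, checking they match on the nose. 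The grey $\psi$'s should, as the paper repeatedly emphasises, be ignored on a first pass: first verify the identity "up to $\psi$-insertions'' purely diagrammatically, then do a separate pass tracking the exponents.

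Concretely, I would proceed as follows. First, for the left-hand identity in \eqref{eq:br_ids}, I expect both sides to be an instance of sliding a $T$-line past the configuration of two Wilson lines $M$, $N$: rewrite the left side by applying \eqrefT{1} (the hexagon-type axiom involving $\a$) to one of the two $T$-crossings, producing an $\a$ (or $\abar$) together with an $\omega_i$ insertion; then use \eqrefT{5} to cancel a $\tau_i \overline{\tau_i}$ pair that appears; then apply \eqrefT{1} again to the other crossing. The residual $\psi$-insertions from the two applications of \eqrefT{1} should combine via the identities $\psi_0^M$, $\psi_i^M$, $\omega_M$ into exactly the insertions that appear in the right-hand diagram by definition of the tensor-product $T$-crossing \eqref{eq:tau^MN_i}. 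The same scheme, with the roles of $\otimes_1$ and $\otimes_2$ (equivalently $\a$ and $\abar$) interchanged, handles the second identity; here I would additionally invoke \eqrefO{1}--\eqrefO{2} (the pentagon-type relations for $\a$, $\abar$) to move the $\a$-vertex into the position required on the other side.

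The main obstacle I anticipate is purely combinatorial rather than conceptual: keeping track of \emph{which} tensor factor of $A^{\otimes k}$ each strand acts on as the diagram is manipulated, since $T$ is an $A$-$AA$-bimodule and the partial products $\otimes_1$, $\otimes_2$, $\otimes_0$ all look similar in string diagrams, and one application of a $T$-axiom can shuffle a left action into a right action or vice versa via \eqref{eq:AxB_mod_comm_acts}. A secondary obstacle is getting the exponent of $\psi$ exactly right: each use of \eqrefT{1} or \eqrefT{5} contributes $\psi^{\pm 2}$ or an $\omega$, and the definitions \eqref{eq:tau^MN_i}, \eqref{eq:unit_T-cross} already contain $\omega_i$'s, so a naive count is easy to get off by $\psi^{\pm 2}$; the safe approach is to fix, once and for all, a normal form for a $T$-crossing (say, "all $\psi$'s pushed to the incoming $M$-strand'') and rewrite both sides into that normal form before comparing. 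I would also double-check the identities against the two worked examples (local modules, where $\psi=\id$ and everything trivialises, and the Drinfeld-centre example) as a sanity check that the $\psi$-bookkeeping is consistent. Granting the earlier material, none of the steps uses anything beyond axiom-chasing, so the proof is a (long) verification rather than an argument requiring a new idea.
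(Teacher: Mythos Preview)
Your proposal is a strategy outline rather than a proof, and the concrete axioms you single out do not match what is actually needed. The paper's argument for the first identity runs
\[
\text{LHS}
\;\stackrel{\eqrefO{6}}{=}\;\cdots
\;\stackrel{\eqrefT{4},\eqrefT{1}}{=}\;\cdots
\;\stackrel{\eqrefT{3}}{=}\;\cdots
\;\stackrel{\eqref{eq:T3bp}}{=}\;\cdots
\;\stackrel{\eqref{eq:T1bp}}{=}\;\cdots
\;\stackrel{\eqrefT{6}}{=}\;\cdots
\;\stackrel{\eqrefO{6}}{=}\;\cdots
\;\stackrel{\eqrefO{8}}{=}\;\text{RHS}\,,
\]
and the second identity is handled analogously. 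The essential moves are the \emph{orbifold} axioms \eqrefO{6} and \eqrefO{8}: the first step uses \eqrefO{6} to introduce an extra $\alpha$/$\abar$ pair (a ``bubble'') on the $T$-strand, picking up a factor of~$\phi$; the final steps use \eqrefO{6} and \eqrefO{8} to remove the residual $T$-structure and absorb that $\phi$. In between, the rearrangement uses \eqrefT{4}, \eqrefT{6} and the derived identities \eqref{eq:T1bp}, \eqref{eq:T3bp} from Appendix~\ref{app:identities}, none of which you name.

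Your plan --- two applications of \eqrefT{1} with \eqrefT{5} in between, and \eqrefO{1}--\eqrefO{2} for the second identity --- cannot reproduce this. Axioms \eqrefT{1}--\eqrefT{3}, \eqrefT{5} only let you reshuffle $T$-crossings along existing $T$-strands and past existing $\alpha$-vertices; they do not let you create or destroy $\alpha$/$\abar$ pairs on a naked $T$-line, which is exactly the role of \eqrefO{6} and \eqrefO{8} here. Likewise, the pentagon relations \eqrefO{1}--\eqrefO{2} play no part in the paper's proof of either identity. So while your meta-strategy (``axiom-chase, then reconcile $\psi$-exponents'') is sound, the specific route you sketch misses the one non-obvious idea: open the identity with a bubble insertion via \eqrefO{6}, and close it with \eqrefO{6}/\eqrefO{8}.
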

\begin{proof}
Using also the identities in Appendix \ref{app:identities}
(and which are denoted by a prime, e.g.\ \eqref{eq:T3bp})
for the first equality one has\footnote{
As already mentioned in Figure~\ref{fig:orb_ids}, in this and the following computations it is helpful to ignore the $\phi$- and $\psi$-insertions at first and only verify that these also work out as a second step.
To make this easier, all $\psi$'s are shown in grey in string diagrams.
}
\begin{align*}
&                             \pic[1.5]{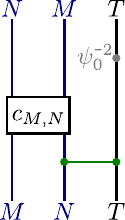} 
\stackrel{\eqrefO{6}}{=}      \pic[1.5]{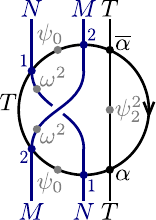} \cdot \phi
\underset{\eqrefT{1}}{\stackrel{\eqrefT{4}}{=}}
 							  \pic[1.5]{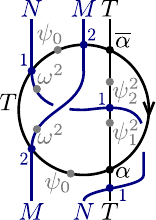} \cdot \phi\\
&\stackrel{\eqrefT{3}}{=}     \pic[1.5]{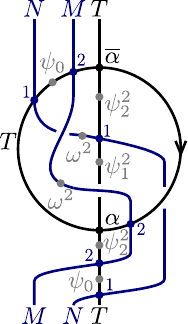} \cdot \phi
\stackrel{\eqref{eq:T3bp}}{=} \pic[1.5]{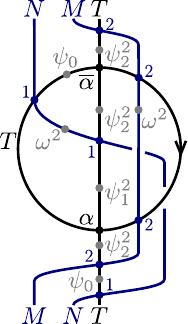} \cdot \phi
\stackrel{\eqref{eq:T1bp}}{=} \pic[1.5]{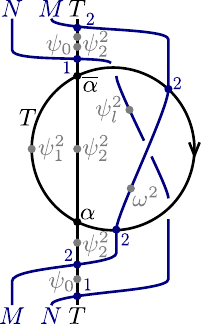} \cdot \phi\\
&\stackrel{\eqrefT{6}}{=}     \pic[1.5]{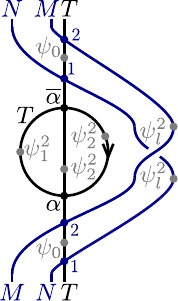} \cdot \phi
\stackrel{\eqrefO{6}}{=}      \pic[1.5]{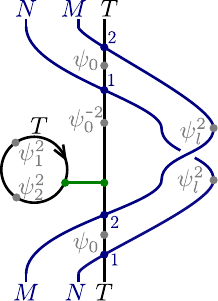} \cdot \phi
\stackrel{\eqrefO{8}}{=}      \pic[1.5]{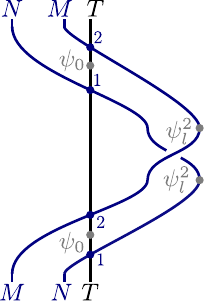}.
\end{align*}
Similarly one can show the second identity.
\end{proof}
\begin{prp}\label{prop:c_MN-is-a-braiding}
$\{c_{M,N}\}_{M,N\in\mcC_\opA}$ defines a braiding on $\mcC_\opA$.
\end{prp}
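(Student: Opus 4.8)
The plan is to verify the three defining properties of a braiding in turn: that each $c_{M,N}$ is a morphism in $\mcC_\opA$ (i.e.\ compatible with the $T$-crossings), that $c^{-1}_{M,N}$ as defined in \eqref{eq:D_braiding} is genuinely inverse to $c_{M,N}$, and that the two hexagon axioms hold. Throughout I would follow the convention emphasised in Figure~\ref{fig:orb_ids} and the footnote to Lemma~\ref{lem:br_ids}: first check the underlying graphical identity ignoring all $\phi$- and $\psi$-insertions, then track the normalisation factors as a bookkeeping step, using \eqrefO{8} to absorb or produce the powers of $\phi$ and the identities of Appendix~\ref{app:identities} to move the $\psi$'s, $\omega$'s around.

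First I would show $c_{M,N}$ is a morphism $M\otimes_A N \to N\otimes_A M$ in $\mcC_\opA$, i.e.\ that it intertwines $\tau_i^{M,N}$ and $\tau_i^{N,M}$ in the sense of \eqref{eq:M}. This is exactly the role of Lemma~\ref{lem:br_ids}: the two identities there say precisely that the surface-defect bubble defining the braiding can be slid past a $T$-crossing on either side, which is the naturality-type compatibility needed. Concretely, I would take the expression $\tau_i^{N,M}\circ(c_{M,N}\otimes_0\id_T)$, expand $\tau_i^{N,M}$ via \eqref{eq:tau^MN_i}, push the $c_{M,N}$-bubble upward through the $\tau^N_i$ and then the $\tau^M_i$ factors using \eqref{eq:br_ids} (and its mirror), and recognise the result as $(\id_T\otimes_i c_{M,N})\circ\tau_i^{M,N}$; the $\psi$- and $\phi$-insertions match because each application of \eqref{eq:br_ids} carries exactly the insertions built into \eqref{eq:tau^MN_i}.

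Next I would check $c^{-1}_{M,N}\circ c_{M,N} = \id_{M\otimes_A N}$ and the reverse. Graphically, stacking the two bubbles of \eqref{eq:D_braiding} produces a configuration where one can cancel an $\alpha$ against an $\abar$ using \eqrefO{4}/\eqrefO{5}, then collapse the resulting $T$-bubble using \eqrefO{7} (the $T$-bubble-removal identity), leaving only $\psi$- and $\phi$-insertions; these collapse to the identity because the $\phi^2$ from the two bubbles cancels against the normalisation in \eqrefO{8} and the $\psi$'s cancel by \eqref{eq:D_sep_and_symm} together with the Appendix identities. Finally, for the hexagons, I would use that the tensor-product $T$-crossing \eqref{eq:tau^MN_i} is built by juxtaposing the factor $T$-crossings, so the braiding of $M\otimes_A N$ past $P$ decomposes, after one isotopy of the bubble, into the braiding of $M$ past $P$ followed by that of $N$ past $P$; the associator being that of $\mcACA$ means no extra coherence data intervenes, and again \eqrefO{8} reconciles the single $\phi$ on the left-hand side with the two $\phi$'s produced on the right.

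The main obstacle I expect is not any single identity but the $\phi$- and $\psi$-bookkeeping in the hexagons: unlike the invertibility check, where the two bubbles are mirror images and insertions cancel symmetrically, in the hexagon one $\phi$ must equal a product of two, which only works because of the precise power of $\phi^{-1}$ in \eqrefO{8} and because the $\omega_i$-insertions in \eqref{eq:tau^MN_i} were designed (as the authors note after Definition~\ref{def:CA}) exactly so that \eqrefT{1} and its consequences survive taking tensor products. Getting every grey $\psi$ to land in the right place will require careful use of the Appendix identities, but there is no conceptual difficulty beyond that.
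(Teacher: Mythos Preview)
Your plan is essentially the paper's: verify that $c_{M,N}$ is a morphism in $\mcC_\opA$, that $c^{-1}_{M,N}$ is an actual inverse, that the hexagons hold, and that naturality follows, all by repeatedly applying \eqrefO{1}--\eqrefO{8}, \eqrefT{1}--\eqrefT{7} and the Appendix identities, with Lemma~\ref{lem:br_ids} as the main structural input. (You omit naturality in $M,N$ from your list, but it is immediate from \eqref{eq:M} and the fact that the bubble is built entirely from $T$-crossings.)

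The one point where your organisation diverges from the paper is the placement of Lemma~\ref{lem:br_ids}. You invoke it for the \eqref{eq:M}-compatibility of $c_{M,N}$ and then treat the hexagon as ``one isotopy of the bubble''. The paper does the opposite: it singles out the hexagon as the place where Lemma~\ref{lem:br_ids} does the real work. Unfolding $c_{M,N\otimes_A P}$ via \eqref{eq:tau^MN_i} does not give two separate bubbles by a mere isotopy; what it gives is a configuration where the $\tau_i^N$, $\tau_i^P$ are interleaved, and it is precisely the identity \eqref{eq:br_ids} that lets you peel off one braiding bubble (say $c_{M,N}$) and leave behind the $T$-crossings that reassemble into $c_{M,P}$, after which a single application of \eqrefT{5} finishes the job. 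So your ``isotopy'' step is really another application of Lemma~\ref{lem:br_ids}; once you recognise that, your $\phi$-count anxiety also resolves, since the lemma already carries the correct $\phi$-normalisation and \eqrefO{8} is not needed in the hexagon step itself. None of this is a gap in your argument, only a slight misattribution of which step is the hard one.
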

\begin{proof}
One must check that for $M,N\in\mcC_\opA$, $c_{M,N}$ and $c^{-1}_{M,N}$ are natural in $M$, $N$, satisfy the hexagon identities, are inverses of each other and that the identity \eqref{eq:M} holds.
All of this can be done by repeatedly applying \eqrefO{1}-\eqrefO{8}, \eqrefT{1}-\eqrefT{7} and \eqref{eq:M}; we only show one of the hexagon identities 
for $c_{M,N}$.
Using Lemma \ref{lem:br_ids} one gets:
\begin{align*}
&  								 \pic[1.5]{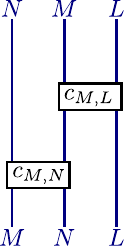}
 = 								 \pic[1.5]{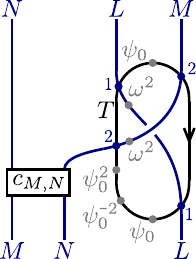} \cdot \phi
 \stackrel{\eqref{eq:br_ids}}{=} \pic[1.5]{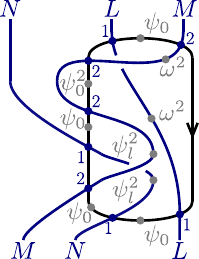} \cdot \phi
\\
&\stackrel{\eqrefT{5}}{=} 		 \pic[1.5]{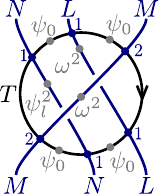} \cdot \phi
 = 								 \pic[1.5]{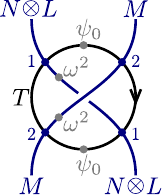} \cdot \phi
 = 								 \pic[1.5]{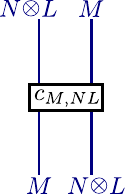}.
\end{align*}
\end{proof}
\begin{prp}
\label{prp:D_is_spherical}
$\mcC_\opA$ is spherical.
\end{prp}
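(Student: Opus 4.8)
The plan is to show that $\mcC_\opA$ is spherical by verifying that the two natural notions of (quantum) trace coming from the pivotal structure in \eqref{eq:evs_evts_in_D} agree, i.e.\ that $\ev_M \circ (\id_{M^*} \otimes_A f) \circ \coevt_M = \evt_M \circ (f \otimes_A \id_{M^*}) \circ \coev_M$ as elements of $\End_{\mcC_\opA}(\opid_{\mcC_\opA})$ for every $M \in \mcC_\opA$ and every endomorphism $f : M \to M$ in $\mcC_\opA$. Since $\opid_{\mcC_\opA} = A$ and $\End_{\mcC_\opA}(A)$ embeds into $\End_{\mcACA}(A)$ (a morphism in $\mcC_\opA$ is in particular a bimodule map), it suffices to prove this equality already at the level of bimodule maps $A \to A$; there we may compute directly with string diagrams in $\mcC$, using \eqref{eq:proj_swallow} to pass freely between $\otimes_A$ and $\otimes$.

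First I would write out both composites explicitly as morphisms in $\mcACA$, using the definitions in \eqref{eq:evs_evts_in_D}. Each side is a diagram in $\mcC$ built from the (co)evaluations of the underlying object $M \in \mcC$, the half-braidings/$T$-crossings $\tau_i, \overline{\tau_i}$ (which enter through the dual $T$-crossings on $M^*$), the endomorphism $f$, and several insertions of $\psi_l^M$, $\psi_r^M$ and $\omega_M$. The left trace and the right trace differ by how the string representing $M$ is routed around, and by the pattern of $\psi$-insertions. The strategy is the standard one: ignore the $\psi$- and $\phi$-decorations at first and check that the ``topological'' parts of the two diagrams agree — this should reduce to the pivotal (sphericity) axioms of $\mcC$ itself together with the compatibility of $\tau_i$ with the duality, i.e.\ the dual $T$-crossing definitions and the zig-zag identities already invoked after \eqref{eq:evs_evts_in_D}. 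Then, as a second step, one tracks the grey $\psi$-insertions and checks they cancel; here the identities $\psi_l^{M^*} = (\psi_r^M)^*$, $\psi_r^{M^*} = (\psi_l^M)^*$ noted in the text, together with $\tr_\mcC$-cyclicity and the fact that $\psi$ is a bimodule map (so it slides along $A$-strands), should make everything match. One may also need one or two of the auxiliary identities from Appendix~\ref{app:identities} and condition \eqrefO{8} to balance a stray $\phi$.

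A cleaner alternative, which I would actually prefer to carry out, is to use that $\mcC_\opA$ is already known (by Proposition~\ref{prop:c_MN-is-a-braiding}) to be braided with a pivotal structure, and to invoke the standard fact (as used for $\mcC$ itself just above, citing \cite[Lem.\ 4.5]{TuVi}) that a braided pivotal fusion category is spherical as soon as the twist $\theta_M$ defined from braiding and duality as in \eqref{eq:twist_def} satisfies $\theta_{M^*} = (\theta_M)^*$, equivalently that $\theta$ is compatible with duals. So the reduction is: (a) check $\mcC_\opA$ is a (finitely semisimple) fusion category — finiteness of simple objects and semisimplicity follow because objects of $\mcC_\opA$ sit over $\mcACA$, which is finitely semisimple by Proposition~\ref{prp:ACA_is_ssi}, and the forgetful functor to $\mcACA$ is faithful and exact, so $\mcC_\opA$ has finitely many simples and every object is a finite direct sum of them; linearity and finite-dimensional Hom-spaces are inherited; (b) define $\theta_M$ via \eqref{eq:twist_def} using $c_{M,N}$ from \eqref{eq:D_braiding} and the pivotal structure, and verify $\theta_{M^*} = \theta_M^*$ by a short string-diagram computation, again first ignoring and then tracking the $\psi,\phi$ insertions.

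The main obstacle I anticipate is purely bookkeeping rather than conceptual: making sure every insertion of $\psi_l$, $\psi_r$, $\omega$ and every factor of $\phi$ in the duality morphisms \eqref{eq:evs_evts_in_D} and the braiding \eqref{eq:D_braiding} cancels correctly when comparing left and right traces (or when comparing $\theta_{M^*}$ with $\theta_M^*$). The ``underlying'' topological identity is essentially sphericity of $\mcC$ plus the zig-zags, which is routine; the delicate point is that $\mcC_\opA$ is pivotal but the $\psi$'s are \emph{not} symmetric in an obvious way between $M$ and $M^*$, so one must use precisely the relations $\psi_l^{M^*}=(\psi_r^M)^*$, $\psi_r^{M^*}=(\psi_l^M)^*$ and the defining identities \eqrefO{1}--\eqrefO{8} (in particular those governing how $\psi_0$ and $\omega_i$ interact with $\alpha$, $\overline\alpha$) to close the computation. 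I would organise the proof so that this $\psi/\phi$-tracking is isolated into one lemma-like paragraph, citing the relevant appendix identities, rather than interleaved with the topological manipulation.
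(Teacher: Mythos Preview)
Your first outline is pointed in the right direction, but it misses the actual mechanism the paper uses, and your heuristic ``the topological part reduces to sphericity of $\mcC$ plus zig-zags, then track the $\psi$'s'' is where the gap lies. Stripping the $\psi$-insertions indeed leaves the ordinary left/right traces in $\mcACA$, which agree because $A$ is symmetric. The content of the proof is entirely in the $\psi$-layer, and there the cancellation is \emph{not} bookkeeping that can be closed with $\psi_l^{M^*}=(\psi_r^M)^*$ and cyclicity alone. The paper's argument goes through an auxiliary $T$-loop: one uses \eqrefO{8} to create a $T$-bubble (this is what produces the factor of $\phi$, not a ``stray $\phi$'' to be balanced at the end), then applies \eqrefT{5}, \eqrefT{6} and crucially \eqref{eq:M} to slide $f$ and the $M$-strand around this $T$-loop, invokes the ribbon structure of $\mcC$ to flip the $M$-ribbon around the back of the $T$-loop, and finally runs the first steps in reverse to remove the loop and land on the other trace. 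In particular the hypothesis that $f$ is a morphism in $\mcC_\opA$ (not just in $\mcACA$) enters through \eqref{eq:M}; your sketch, which only invokes ``compatibility of $\tau_i$ with duality'', does not account for this and would seem to claim the two traces agree for arbitrary bimodule endomorphisms, which is not what is being shown.

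Your alternative route via $\theta_{M^*}=(\theta_M)^*$ is a legitimate characterisation and does not require semisimplicity (so your argument for fusion is both unnecessary and, as stated, incorrect: a faithful exact functor to a semisimple category does not force the source to be semisimple). But it is not a shortcut here: the twist \eqref{eq:twist} is itself built from the braiding \eqref{eq:D_braiding}, which already contains a $T$-loop, so verifying $\theta_{M^*}=(\theta_M)^*$ amounts to essentially the same $T$-loop manipulation as the direct trace comparison. Either way you will have to carry out the step you are currently treating as routine.
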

\begin{proof}
One needs to check that for all $M\in\mcC_\opA$ and $f\in\End_{\mcC_\opA}$, the left and right traces of $f$ are equal, i.e.
$\evt_M\circ (f\otimes\id_{M^*}) \circ\coev_M = \ev_M\circ (\id_{M^*}\otimes f) \circ\coevt_M$.
We have:
\begin{align*}
&							           \pic[1.5]{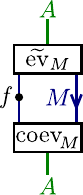}
\stackrel{\eqref{eq:evs_evts_in_D}}{=}
 							           \pic[1.5]{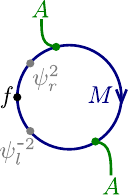}
 \stackrel{\eqrefO{8}}{=}              \pic[1.5]{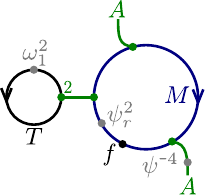} \cdot \phi
 \underset{\eqref{eq:M}}{\stackrel{\eqrefT{5}}{=}} 
\pic[1.5]{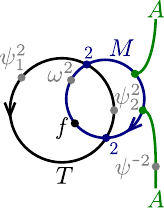} \cdot \phi\\
&\stackrel{\eqrefT{6}}{=}              \pic[1.5]{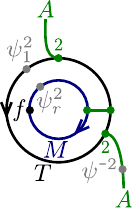} \cdot \phi
= 							           \pic[1.5]{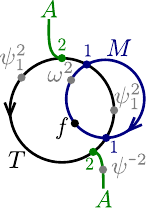} \cdot \phi
= 							           \pic[1.5]{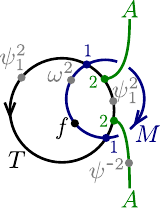} \cdot \phi\\
& \stackrel{(*)}{=} 		           \pic[1.5]{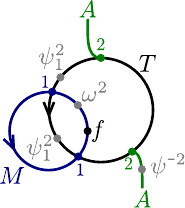} \cdot \phi
= 							           \pic[1.5]{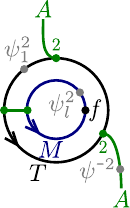} \cdot \phi
\stackrel{(**)}{=}
 							           \pic[1.5]{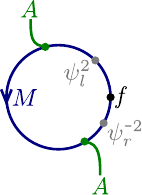}
= 							           \pic[1.5]{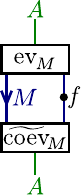},
\end{align*}
where in step $(*)$ one uses that $\mcC$ is ribbon to flip the rightmost part of the $M$-ribbon around the back of the circular $T$-ribbon (recall that all string diagrams in this section are drawn in $\mcC$).
For step $(**)$ one basically runs the first four steps in reverse order. 
\end{proof}
\begin{rem}
\begin{enumerate}
\item
For $M,N\in\mcC_\opA$ let $\Omega':M\otimes_A N \otimes_0 T \ra N \otimes_A M \otimes_0 T$ be the morphism on the left hand side of the first identity in Lemma \ref{lem:br_ids} and denote
\begin{equation}
\label{eq:almost_braiding}
\Omega := (\id_N \otimes_A \id_M \otimes_0 (\psi_2^2 \circ \psi_1^2) )) \circ \Omega' \cdot \phi
\end{equation}
Then one has $c_{M,N} = \tr_{\mcC,T} \Omega$, where $\tr_{\mcC,T}$ denotes the partial trace with respect to $T$, taken in $\mcC$.
\item
In what follows we show that $\mcC_\opA$ is
multifusion (see~\cite{EGNO} for the definition; a multifusion category with simple tensor unit is fusion).
Proposition \ref{prp:D_is_spherical} then implies that $\mcC_\opA$ is ribbon (see Section \ref{subsec:conventions}).
The twist of an object $M\in\mcC_\opA$ is obtained as in \eqref{eq:twist_def}, or explicitly:
\begin{equation}
\label{eq:twist}
\theta_M 
\underset{\eqref{eq:evs_evts_in_D}}{\stackrel{\eqref{eq:twist_def}}{:=}}
\pic[1.5]{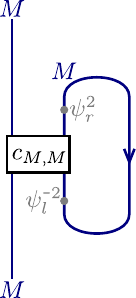} 
\stackrel{\eqref{eq:D_braiding}}{=}
\pic[1.5]{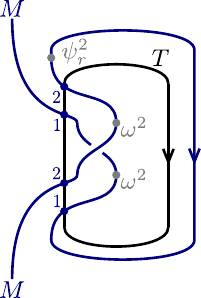} \cdot \phi.
\end{equation}
\end{enumerate}
\label{rem:mutifusion-to-ribbon}
\end{rem}
\subsection{Semisimplicity}
We will show the semisimplicity of $\mcC_\opA$ by exploiting a sequence of adjunctions.
These will involve some auxiliary categories $\mcD_1$, $\mcD_2$, which we now define.
\begin{defn}
Define the categories $\mcD_i$, $i=1,2$ as follows:
\begin{itemize}
\item
\it{objects} of $\mcD_i$ are triples $(M,\tau_i,\overline{\tau_i})$, where $M\in\mcACA$ and
$\tau_i:M \otimes_0 T \ra T \otimes_i M$ is a $T$-crossing with pseudo-inverse $\overline{\tau_i}$, i.e.\ it satisfies the identities \eqrefT{1} and \eqrefT{4}-\eqrefT{7} (for $i=1$)  and \eqrefT{3}-\eqrefT{7} (for $i=2$);
\item
\it{morphisms} of $\mcD_i$ are bimodule morphisms, satisfying the identity \eqref{eq:M} (for the given value of $i$ only).
\end{itemize}
\end{defn}
\begin{rem}\label{rem:D-Interface-Wilson}
The categories $\mcD_1$, $\mcD_2$ can be interpreted in the orbifold TQFT setting briefly outlined in Section \ref{subsec:TQFT_motivation}.
Namely, they describe Wilson lines that live on an interface between the TQFT described by $\mcC$ and the TQFT given by the orbifold datum $\opA$.
$\mcD_1$, $\mcD_2$ are also candidates for the category $\mcD$ alluded to in Remark \ref{rem:Witt_equiv}.
This will be further elaborated in \cite{M}.
\end{rem}
We will now define four functors:
\begin{equation}
	\label{eq:H_functs}
	\begin{tikzcd}
		\mcACA\arrow[r, "H_1"]\arrow[d, "H_2"]
		& \mcD_1\arrow[d, "H_{21}"] \\
		\mcD_2\arrow[r, "H_{12}"]
		& \mcC_{\opA}
	\end{tikzcd}.
\end{equation}
Namely, for any bimodule $M\in\mcACA$, define two bimodules $H_1(M)$ and $H_2(M)$ together with $T$-crossings $\tau_1^{H_1(M)}$, $\tau_2^{H_2(M)}$ and their pseudo-inverses by
\begin{align}
&H_1(M)  :=                 \im \pic[1.5]{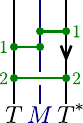},
&&H_2(M) :=                 \im \pic[1.5]{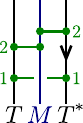},\\		\label{eq:HM_Tcross}
&\tau_1^{H_1(M)} :=             \pic[1.5]{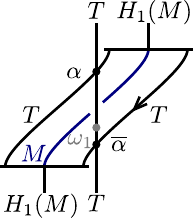},
&&\tau_2^{H_2(M)} :=            \pic[1.5]{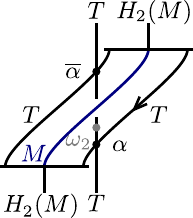},\\   \label{eq:HM_Tcross_inv}
&\overline{\tau_1^{H_1(M)}} :=  \pic[1.5]{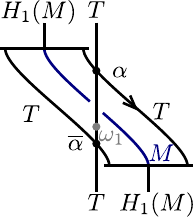},
&&\overline{\tau_2^{H_2(M)}} := \pic[1.5]{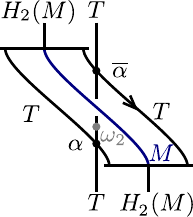}.
\end{align}
It is easy to check that $H_1(M)$, $H_2(M)$ are objects of $\mcD_1$ and $\mcD_2$, respectively.

Next, for $K\in\mcD_2$ define $H_{12}(K)$ to have the same underlying bimodule as $H_1(K)$, and set
\begin{equation}
\tau_1^{H_{12}(K)} = \tau_1^{H_{1}(K)} \quad , \quad  
\overline{\tau_1^{H_{12}(K)}} = \overline{\tau_1^{H_{1}(K)}} \ .   
\end{equation}
The $T$-crossings $\tau_2$ are defined as follows:
\begin{equation}
\tau_2^{H_{12}(K)} := \pic[1.5]{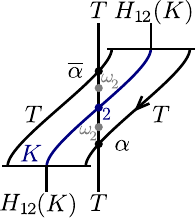} 
\quad ,\qquad
\overline{\tau_2^{H_{12}(K)}} := \pic[1.5]{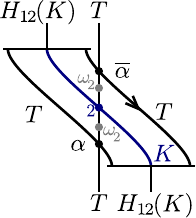} 
\ .
\end{equation}
For $H_{21}$ one proceeds analogously. Given $L\in\mcD_1$,  $H_{21}(L)$ has the same bimodule as $H_2(L)$, the $T$-crossings $\tau_2$ agree with those of $H_2$, while the $T$-crossings $\tau_1$ are given by
\begin{equation}
\tau_1^{H_{21}(L)} := \pic[1.5]{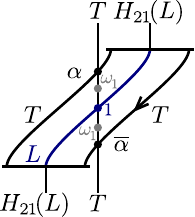}
\quad ,\qquad
\overline{\tau_1^{H_{21}(L)}} := \pic[1.5]{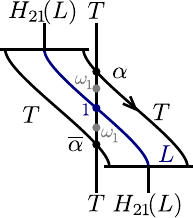} \ .
\end{equation}
One verifies that this makes $H_{12}(K)$, $H_{21}(L)$ into objects of $\mcC_\opA$.

On a morphism $f$ each functor acts as $\pi \circ (\id_T \otimes f \otimes \id_{T^*}) \circ \iota$, with $\pi$, $\iota$ the corresponding projection and embedding.
\begin{figure}
	\centering
	\begin{subfigure}[b]{0.4\textwidth}
		\centering
		\pic[1.5]{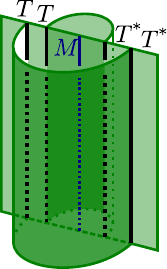}
		\caption{}
		\label{fig:H12_strat}
	\end{subfigure}
	\begin{subfigure}[b]{0.4\textwidth}
		\centering
		\pic[1.5]{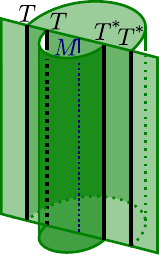}
		\caption{}
		\label{fig:H21_strat}
	\end{subfigure}
\caption{
Stratifications corresponding to the compositions (a) $H_{12}\circ H_2(M)$ and (b) $H_{21} \circ H_1(M)$.
}
\label{fig:H_strats}
\end{figure}

\begin{rem}
\label{rem:pipe_funct-strat}
		Let $M\in\mcACA$.
		As Wilson lines in the orbifold TQFT from Section~\ref{subsec:TQFT_motivation}, $H_{12}\circ H_2(M)$ and $H_{21} \circ H_1(M)$ correspond to the stratifications in Figure~\ref{fig:H_strats}.
		For this reason, we call these functors ``pipe functors''.
\end{rem}

As explained in Appendix \ref{App:sep_biadj}, for any pair of $\opk$-linear categories $\mcA$, $\mcB$, a biadjunction between functors $X:\mcA \ra \mcB$, $Y:\mcB \ra \mcA$ is called \it{separable}, if the natural transformation $\vareps\circ\widetilde{\eta}: \Id_\mcB \Ra \Id_\mcB$ is invertible (here $\widetilde{\eta}: \Id_\mcB \Ra XY$ is the unit of the adjunction $Y\dashv X$ and $\vareps:XY \Ra \Id_\mcB$ is the counit of the adjunction $X \dashv Y$).
Suppose now that the category $\mcA$ is finitely semisimple, $\mcB$ is idempotent complete, and that there exists a separable biadjunction between $\mcA$ and $\mcB$.
Then it is shown in Proposition \ref{thm:ssi_cond} that $\mcB$ is finitely semisimple as well.

Consider the following commuting square of forgetful functors:
\begin{equation}
\label{eq:U_functs}
\begin{tikzcd}
\mcACA
& \mcD_1\arrow[l, "U_1",swap]\\
\mcD_2\arrow[u, "U_2",swap]
& \mcC_\opA\arrow[l, "U_{12}",swap]\arrow[u, "U_{21}",swap]
\end{tikzcd}.
\end{equation}
We have:
\begin{prp}
\label{prp:biadjunctions}
$(H_1,U_1)$, $(H_{12}, U_{12})$, $(H_2,U_2)$, $(H_{21}, U_{21})$ are pairs of biadjoint functors and in each case the biadjunction is separable.
\end{prp}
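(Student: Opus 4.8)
The plan is to exhibit, for each of the four pairs, explicit unit and counit natural transformations and verify the zig-zag identities, then check that the resulting biadjunction is separable in the sense of Appendix~\ref{App:sep_biadj}. All four cases follow the same pattern: the functors $H_1, H_2, H_{12}, H_{21}$ are built by wrapping a bimodule with $T$ and $T^*$ on either side and taking the image of an idempotent; hence each right adjoint $U$ simply forgets the extra $T$-crossing data. The natural candidates for unit and counit are, up to $\psi$- and $\phi$-insertions, the evaluation and coevaluation morphisms for $T$ (as a bimodule) combined with the projection/embedding $\pi$, $\iota$ that split the defining idempotent. Concretely, for the pair $(H_1, U_1)$ the unit $\eta_M : M \ra U_1 H_1(M)$ and counit $\vareps_L : H_1 U_1(L) \ra L$ are read off from the $T$-loop created by $\coev_T$ and $\ev_T$ in $\mcC$; the $T$-crossing $\tau_1^{H_1(M)}$ in~\eqref{eq:HM_Tcross} is exactly what makes $\vareps_L$ into a morphism in $\mcD_1$, i.e.\ compatible with \eqref{eq:M}. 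The same morphisms serve as counit and unit for the opposite adjunction $U_1 \dashv H_1$ (with the roles of $\ev$/$\coev$ and $\evt$/$\coevt$ swapped), using that $\mcC$ — and hence $\mcACA$ by Proposition~\ref{prp:ACA_is_ssi} — is pivotal, so $T$ has a two-sided dual.

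First I would treat $(H_1, U_1)$ in detail. The four zig-zag identities reduce, after discarding the grey $\psi$- and $\phi$-insertions, to the zig-zag identities for $T$ in $\mcACA$ together with the fact that $\pi \iota = \id$ and $\iota \pi$ is the idempotent defining $H_1(M)$; the bookkeeping of the $\psi$'s is governed by \eqrefO{2}--\eqrefO{3} and the definitions of $\psi_0$, $\psi_i$, $\omega_i$, while the $\phi$'s cancel because $H_1$ involves one $T$-cap and one $T$-cup. Separability amounts to showing that $\vareps \circ \widetilde\eta : \Id_{\mcD_1} \Ra \Id_{\mcD_1}$ is invertible; evaluated on an object $(M,\tau_1,\overline{\tau_1})$ this natural transformation is, diagrammatically, a $T$-bubble on the $M$-strand, which by \eqrefO{8} (the $\D$-separability / bubble-popping axiom for the orbifold datum, relating a closed $T$-loop to $\phi^{-1}$ times $\psi$-insertions) equals an invertible bimodule endomorphism of $M$ — indeed a power of $\psi_l \psi_r$ times a nonzero scalar — which is automatically a morphism in $\mcD_1$. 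This is where the hypotheses that $A$ is $\D$-separable and that $\psi$ is invertible are essential.

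The pairs $(H_2, U_2)$, $(H_{12}, U_{12})$, $(H_{21}, U_{21})$ are handled by the same argument, using the analogue of~\eqref{eq:HM_Tcross} for $\tau_2$ in the second case, and for the two ``pipe'' functors the additional $T$-crossing built from $\a$, $\abar$ (as in the definitions of $H_{12}$, $H_{21}$): there one additionally invokes \eqrefO{4}--\eqrefO{7} to see that the extra $\tau_2$ (resp.\ $\tau_1$) data is compatible with the adjunction morphisms, and \eqrefO{1} to see that the resulting object genuinely lies in $\mcC_\opA$ rather than merely in $\mcD_1$ or $\mcD_2$. The zig-zags and separability again come down to a $T$-bubble computation closed up by \eqrefO{8}.

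\emph{Main obstacle.} The conceptual content is light — it is all ``a $T$-loop is invertible by \eqrefO{8}'' — so the real work, and the main obstacle, is the diagrammatic bookkeeping of the grey $\psi_0, \psi_i, \omega_i$ insertions and the $\phi$-powers: one must check that in every zig-zag the $\psi$'s reorganise (via \eqrefO{2}, \eqrefO{3} and the braid-type moves for $\a$, $\abar$) into precisely the combinations demanded by the definitions of the $T$-crossings $\tau_i^{H(M)}$ and their pseudo-inverses, and that the net power of $\phi$ is zero. I expect this to be routine but lengthy, and for the write-up I would do $(H_1, U_1)$ completely and indicate the others as ``entirely analogous'', pointing to the specific axioms that replace \eqrefO{2}--\eqrefO{3} in each case.
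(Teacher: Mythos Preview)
Your approach is correct and essentially the same as the paper's. The paper also treats $(H_1,U_1)$ in detail and declares the others analogous; it packages the adjunction via explicit hom-set bijections $\varphi_{M,N}$ and $\chi_{K,L}$ rather than units/counits, but this is only a presentational difference.

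One small correction: the zig-zag verification for $(H_1,U_1)$ is not purely ``$T$-duality in $\mcACA$ plus $\pi\iota=\id$''. The counit $\vareps_L$ must use the $T$-crossing $\tau_1^L$, so when $L=H_1M$ it brings in $\abar$ via the definition~\eqref{eq:HM_Tcross}. Consequently the non-trivial zig-zag already requires the orbifold axioms \eqrefO{3}, \eqrefO{4} (and \eqrefT{4}, \eqref{eq:T1bp}) to cancel the resulting $\a$--$\abar$ pair, not only \eqrefO{2}--\eqrefO{3} as you suggest; \eqrefO{8} then closes the remaining $T$-loop. Your identification of separability as ``a $T$-bubble popped by \eqrefO{8}'' is exactly what the paper does.
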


\begin{proof}
We show this for $(H_1,U_1)$ only, the proofs for other cases are similar.
The biadjunction is given by maps
\begin{equation*}
\begin{array}{rcl}
 \mcD_1(H_1M,N) &\ra &\mcACA(M,U_1N)\\
 \varphi_{M,N}: \pic[1.5]{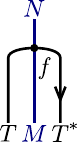} & \mapsto   & \pic[1.5]{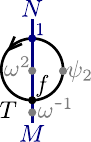} \cdot \phi\\
 \varphi^{-1}_{M,N}:\pic[1.5]{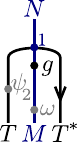} & \mapsfrom & \pic[1.5]{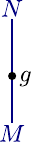}
\end{array}, \quad
\begin{array}{rcl}
 \mcD_1(K,H_1L) &\ra &\mcACA(U_1K,L)\\
 \chi_{K,L}: \pic[1.5]{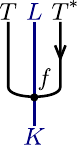} & \mapsto   & \pic[1.5]{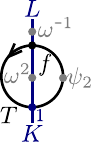} \cdot \phi\\
 \chi^{-1}_{K,L}:\pic[1.5]{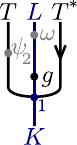} & \mapsfrom & \pic[1.5]{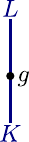}
\end{array}
\end{equation*}
for all $M,L\in\mcACA$, $N,K\in\mcD_1$.
As an example, we will check that $\varphi_{M,N}$ and $\varphi_{M,N}^{-1}$ are indeed inverses of each other: Let $f\in\mcD_1(H_1M, N)$, $g\in\mcACA(M,U_1N)$.
Then $\varphi_{M,N}\circ\varphi_{M,N}^{-1}(g) = g$ follows by applying
\eqrefT{5} and \eqrefO{8} to remove the $T$-loop
while the other composition needs the following more elaborate computation:
\begin{align*}
& \varphi_{M,N}^{-1}\circ\varphi_{M,N}(f) = 
 \pic[1.5]{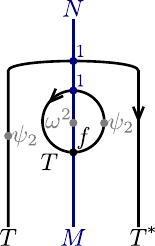}
 \cdot \phi
   \stackrel{\eqrefO{4}}{=}  
   \pic[1.5]{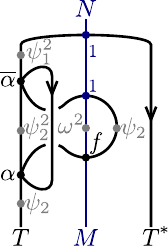}
   \cdot \phi
\\
&
\underset{\eqrefT{4}}{\stackrel{\eqref{eq:T1bp}}{=}} 
\pic[1.5]{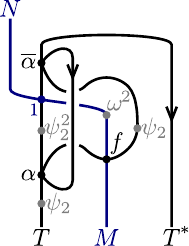}
\cdot \phi
\overset{\text{deform}}=
\pic[1.5]{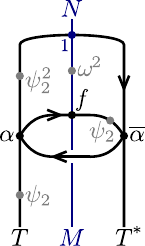}
\cdot\phi
\stackrel{\eqref{eq:HM_Tcross_inv}}{=}
 \pic[1.5]{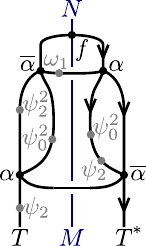}
 \cdot\phi
\\
&
 \stackrel{\eqrefO{3}}{=}
 \pic[1.5]{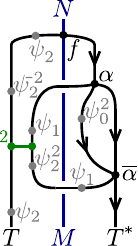}
 \cdot\phi
  \stackrel{\eqrefO{3}}{=}
 \pic[1.5]{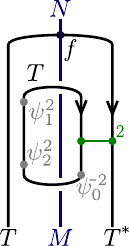}
 \cdot\phi
 \stackrel{\eqrefO{8}}{=} f \ .
\end{align*}
Similarly, $\chi_{K,L}$ and $\chi_{K,L}^{-1}$ are inverses of each other.
One also checks that $\varphi^{-1}_{M,N}(g)$, $\chi^{-1}_{K,L}(g)$ are morphisms in $\mcD_1$.

As always, the counit $\vareps: H_1 U_1 \Ra \Id_{\mcD_1}$ is given by $\{\vareps_N = \varphi^{-1}_{U_1N,N}(\id_{U_1N})\}_{N\in\mcD_1}$ and the unit $\widetilde{\eta}: \Id_{\mcD_1} \Ra H_1 U_1$ by $\{\widetilde{\eta_N} = \chi^{-1}_{N,U_1N} (\id_{U_1N})\}_{N\in\mcD_1}$.
For all $N\in\mcD_1$ one has:
\begin{equation*}
\vareps_N \circ \widetilde{\eta_N}
= \pic[1.5]{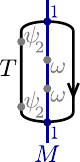} 
\stackrel{\eqrefT{4}}{=} \pic[1.5]{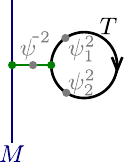} 
\stackrel{\eqrefO{8}}{=} \pic[1.5]{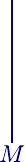} \cdot \phi^{-1},
\end{equation*}
i.e.\ $\vareps_N \circ \widetilde{\eta_N}$ is invertible and hence the biadjunction is separable.
\end{proof}

\begin{rem}
\label{rem:pipe_funct}
Note that the diagram in~\eqref{eq:U_functs} commutes with identity natural isomorphism, $U := U_1 \circ U_{21} = U_2 \circ U_{12} : \mcC_\opA \to \mcACA$, as each path sends an object in $\mcC_\opA$ to its underlying bimodule. By Proposition~\ref{prp:biadjunctions}, both $H_{21} \circ H_1$ and $H_{12} \circ H_2$ are biadjoint to $U$, and hence in particular naturally isomorphic. Thus the diagram in \eqref{eq:H_functs} commutes as well.
In view of the stratifications in Figure~\ref{fig:H_strats} this is not surprising, and an explicit natural isomorphism can be build from $\a$ and $\abar$.
Below we will work exclusively with the the composition
\begin{equation}
	P := H_{12} \circ H_2 : \mcACA \to \mcC_\opA  \ ,
\end{equation}
where `$P$' stands for `pipe functor'.
\end{rem}

\begin{prp}\label{prp:CA-fin-ssi}
The categories $\mcD_1$, $\mcD_2$ and $\mcC_\opA$ are finitely semisimple.
\end{prp}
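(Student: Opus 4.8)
The plan is to deduce finite semisimplicity of all three categories from the separable biadjunctions established in Proposition~\ref{prp:biadjunctions}, together with the general transfer result in Proposition~\ref{thm:ssi_cond}, which says that if $\mcA$ is finitely semisimple, $\mcB$ is idempotent complete, and there is a separable biadjunction $\mcA \rightleftarrows \mcB$, then $\mcB$ is finitely semisimple. The base case is Proposition~\ref{prp:ACA_is_ssi}: since $A$ is a $\D$-separable Frobenius algebra, $\mcACA$ is finitely semisimple. From there one climbs the commuting square~\eqref{eq:U_functs} one edge at a time.

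First I would verify the idempotent completeness hypothesis for $\mcD_1$, $\mcD_2$ and $\mcC_\opA$. This is routine: an idempotent $e$ on an object $M$ of $\mcC_\opA$ (or $\mcD_i$) is in particular an idempotent bimodule endomorphism, which splits in $\mcACA$ since that category is semisimple hence idempotent complete; one then checks that the image bimodule inherits $T$-crossings $\tau_i$ by conjugating those of $M$ with the splitting maps (using that $e$ is a morphism in $\mcC_\opA$, i.e.\ commutes with the $\tau_i$ in the sense of~\eqref{eq:M}), and that the axioms \eqrefT{1}--\eqrefT{7} and~\eqref{eq:M} are preserved under this conjugation. Next, apply Proposition~\ref{thm:ssi_cond} to the separable biadjunction $(H_1, U_1)$ between the finitely semisimple $\mcACA$ and the idempotent-complete $\mcD_1$: this gives that $\mcD_1$ is finitely semisimple. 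Symmetrically, $(H_2,U_2)$ gives that $\mcD_2$ is finitely semisimple. Finally, apply Proposition~\ref{thm:ssi_cond} once more, to either of the separable biadjunctions $(H_{21},U_{21})$ between the now-finitely-semisimple $\mcD_1$ and the idempotent-complete $\mcC_\opA$, or $(H_{12},U_{12})$ between $\mcD_2$ and $\mcC_\opA$; either way $\mcC_\opA$ comes out finitely semisimple.

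I expect the main obstacle to be the idempotent completeness check rather than the transfer argument, which is a black box once Proposition~\ref{thm:ssi_cond} is granted. Specifically, one must be careful that the $T$-crossing data descends to the image of an idempotent: given $e = e^2 : M \to M$ in $\mcC_\opA$ with splitting $M \xrightarrow{p} E \xrightarrow{s} M$ in $\mcACA$, the candidate crossing on $E$ is $(\id_T \otimes_i p) \circ \tau_i^M \circ (s \otimes_0 \id_T)$, and one needs $p s = \id_E$, $s p = e$, plus the compatibility $\tau_i^M \circ (e \otimes_0 \id_T) = (\id_T \otimes_i e)\circ\tau_i^M$ to see that the axioms transport. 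The pseudo-inverses $\overline{\tau_i}$ transport likewise, and the verification of \eqrefT{4},\eqrefT{5} on $E$ reduces to those on $M$ after inserting $s p = e$ and cancelling. None of this is deep, but it is the one place where one actually manipulates the defining axioms rather than invoking a prior proposition; everything else is a direct citation.

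Note that full (not merely finite) semisimplicity in the sense of footnote~\ref{fn:finitely-ssi} is exactly what Proposition~\ref{thm:ssi_cond} delivers, so no separate argument for arbitrary objects being finite direct sums of simples is needed. It is also worth remarking, though not strictly necessary for this proposition, that by Remark~\ref{rem:pipe_funct} the two routes up the square agree, so the resulting semisimple structure on $\mcC_\opA$ is canonical; we will not need this here, as the statement only asserts the existence of the semisimple structure.
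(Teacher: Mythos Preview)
Your proposal is correct and follows essentially the same approach as the paper: reduce to idempotent completeness of $\mcD_1$, $\mcD_2$, $\mcC_\opA$, then invoke the separable biadjunctions of Proposition~\ref{prp:biadjunctions} together with Proposition~\ref{thm:ssi_cond} to transfer finite semisimplicity from $\mcACA$. The paper also verifies idempotent completeness by splitting in $\mcACA$ and conjugating the $T$-crossings with the retract maps, exactly as you describe.
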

\begin{proof}
From Proposition \ref{prp:ACA_is_ssi} we already know that $\mcACA$ is finitely semisimple.
Therefore by Proposition \ref{prp:biadjunctions} and the
argument preceding it, it is enough to show that $\mcD_1$, $\mcD_2$ and $\mcC_\opA$ are idempotent complete.
We show this for $\mcD_1$ only, since the other cases are analogous.
Let $p:M \ra M$ be an idempotent in $\mcD_1$.
Then it is also an idempotent in $\mcACA$ and hence has a retract $(S,e,r)$ in $\mcACA$.
Equip $S$ with the morphisms:
\begin{equation}
\tau_1^S := \pic[1.5]{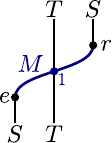}, \qquad
\overline{\tau_1^S} := \pic[1.5]{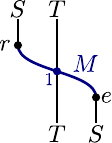}.
\end{equation}
They satisfy the axioms of $T$-crossings, e.g.
\begin{align*}
\pic[1.5]{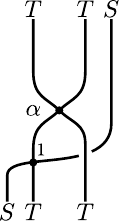} &= \pic[1.5]{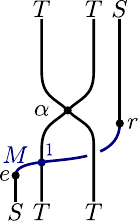} = \pic[1.5]{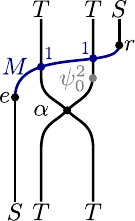} = \pic[1.5]{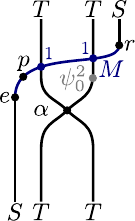} \overset{(*)}= \pic[1.5]{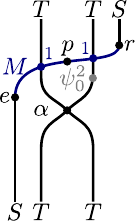}\\
&= \pic[1.5]{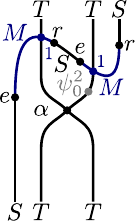} = \pic[1.5]{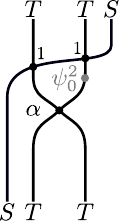} \ ,
\end{align*}
where in step $(*)$ we used that $p$ is a morphism in $\mcD_1$.
The argument that $e$ and $r$ are morphisms in $\mcD_1$ is similar.
$(S,e,r)$ is therefore a retract in $\mcD_1$.
\end{proof}

Combining Propositions~\ref{prp:D_is_spherical} and~\ref{prp:CA-fin-ssi} with Remark~\ref{rem:mutifusion-to-ribbon}, we get:

\begin{cor}\label{cor:CA-multifusion-ribbon}
$\mcC_\opA$ is a ribbon multifusion  category.
\end{cor}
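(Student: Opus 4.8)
The plan is to assemble the structural results already established in the preceding sections; the corollary is essentially a bookkeeping statement. First I would verify that $\mcC_\opA$ meets the abstract definition of a multifusion category (as in \cite{EGNO}). It is $\opk$-linear with finite-dimensional morphism spaces, since by definition $\Hom_{\mcC_\opA}(M,N)$ is a linear subspace of $\Hom_{\mcACA}(M,N) \subseteq \Hom_\mcC(M,N)$, and the latter is finite-dimensional because $\mcC$ is fusion. By Proposition~\ref{prp:CA-fin-ssi} it has finitely many isomorphism classes of simple objects and every object is a finite direct sum of these, so it is a finite semisimple (hence additive, idempotent complete, abelian) $\opk$-linear category. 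Finally, the construction in Section~\ref{subsec:CA_defined} and the paragraphs following it equip $\mcC_\opA$ with a $\opk$-bilinear monoidal product (the underlying bimodule of $M \otimes N$ is $M \otimes_A N$, and $\otimes_A$ is additive and bilinear on morphisms) together with simultaneous left/right duals, making it rigid. This is precisely the definition of a multifusion category; if moreover $\opA$ is simple then $\End_{\mcC_\opA}(\opid_{\mcC_\opA}) = \opk$ and $\mcC_\opA$ is fusion.

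Next I would upgrade this to \emph{ribbon} multifusion using the braiding and the spherical structure. By Proposition~\ref{prop:c_MN-is-a-braiding} the family $\{c_{M,N}\}$ is a braiding on $\mcC_\opA$, and by Proposition~\ref{prp:D_is_spherical} the pivotal structure \eqref{eq:evs_evts_in_D} is spherical. A braided spherical fusion category is automatically ribbon, with twist obtained from braiding and dualities as in \eqref{eq:twist_def} (see Section~\ref{subsec:conventions}, following \cite[Lem.~4.5]{TuVi}); since this argument uses only the hexagon identities and sphericity, it applies verbatim in the multifusion case (equivalently, a braided spherical multifusion category is a finite direct sum of braided spherical fusion categories, to each of which the fusion statement applies). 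Hence $\theta_M$ as computed in \eqref{eq:twist} is a ribbon twist, and $\mcC_\opA$ is a ribbon multifusion category.

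I do not expect a genuinely hard step: the corollary packages Propositions~\ref{prp:D_is_spherical}, \ref{prp:CA-fin-ssi} and~\ref{prop:c_MN-is-a-braiding} together with the monoidal and rigid structure of Section~\ref{subsec:CA_defined} and the standard implication braided $+$ spherical $\Rightarrow$ ribbon. The only point I would spell out explicitly is that finite semisimplicity together with rigidity already suffices for the category-theoretic axioms of ``multifusion'' — in particular $\End_{\mcC_\opA}(\opid_{\mcC_\opA})$ is then automatically a finite product of copies of $\opk$ — which is a general fact about semisimple rigid monoidal categories and requires nothing specific about the orbifold datum $\opA$.
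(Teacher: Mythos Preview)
Your proposal is correct and follows exactly the paper's approach: the corollary is stated as an immediate consequence of Propositions~\ref{prp:D_is_spherical} and~\ref{prp:CA-fin-ssi} together with Remark~\ref{rem:mutifusion-to-ribbon}, which records the braided $+$ spherical $\Rightarrow$ ribbon implication. You simply unpack these ingredients in slightly more detail than the paper does.
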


\subsection{Modularity}
In this section we will in addition assume that $\opA$ is a simple orbifold datum (see Definition \ref{def:simple_orb}), so that by Corollary~\ref{cor:CA-multifusion-ribbon}, $\mcC_\opA$ is a ribbon fusion category. We will show that $\mcC_\opA$ is in fact modular.

Let $\Ind_A: \mcC \ra \mcACA$, $X \mapsto A \otimes X \otimes A$ be the induced bimodule functor.
It is biadjoint to the forgetful functor $U_{AA}: \mcACA \ra \mcC$ (e.g.\ apply the adjunction for left modules in \cite[Prop.\ 4.10, 4.11]{FS} to the algebra $A \otimes A^\opp$).

We will use the pipe functor $P = H_{12} \circ H_2: \mcACA \ra \mcC_\opA$, which is biadjoint to the forgetful functor $U : \mcC_\opA \ra \mcACA$, cf.\ Remark~\ref{rem:pipe_funct}.
It will prove useful to note that e.g. for $M\in\mcACA$, the braiding of $P(M)$ with any object $N\in\mcC_\opA$ can be simplified as follows:
\begin{equation}
\label{eq:braidings_w_FM}
c_{PM,N} = \pic[1.5]{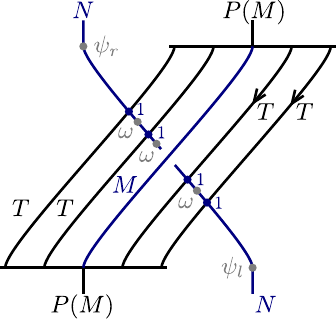}, \quad
c^{-1}_{N,PM} = \pic[1.5]{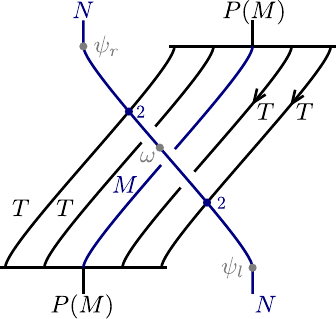}.
\end{equation}

Let $M, N \in \mcC_\opA$ and $f:M \ra N$ be a morphism in $\mcACA$.
Define the averaged morphism $\overline{f}: M \ra N$ to be the $A$-$A$-bimodule morphism
\begin{equation}
\label{eq:average}
\overline{f} := \pic[1.5]{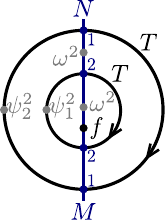} \cdot \phi^2.
\end{equation}
One can check that $\overline{f}$ is a morphism in $\mcC_\opA$.
Moreover, if $f\in\mcC_\opA$, then $f = \overline{f}$, 
i.e.\ averaging is an idempotent on the morphism spaces of $\mcACA$, projecting onto the morphism spaces of $\mcC_\opA$.

Recall the notations $|X|_\mcC$, $\tr_\mcC f$, $\Irr_\mcC$ and $\Dim \mcC$ from Section \ref{subsec:conventions}.
For the remainder of the section, a thick loop (red in the online version) in a string diagram $D$ will mean then sum $\sum_{k\in\Irr_\mcC}|k|_\mcC D_k$, where $D_k$ denotes the string diagram in which 
the red loop is labelled by $k\in\Irr_\mcC$.
\begin{lem}
\label{prp:modularity_cond}
(cf. \cite{KO}, Lemma 4.6)
A premodular category $\mathcal{E}$ is modular iff for all $i\in\Irr_{\mathcal{E}}$
\begin{equation}
\label{eq:modularity_cond}
\pic[1.5]{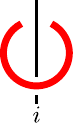}
=
c \cdot \d_{i,\opid} \cdot \pic[1.5]{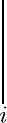}
\end{equation}
for some $c\neq 0$. Moreover, in this case one necessarily has $c = \Dim\mathcal{E}$.
\end{lem}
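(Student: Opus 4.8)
The plan is to prove the characterisation of modularity via the "killing ring" property stated in the lemma, following the classical argument (cf.\ \cite{KO,BK}) but phrased for a general premodular category $\mathcal{E}$. First I would fix the notation: for $i\in\Irr_{\mathcal{E}}$, the left-hand side of \eqref{eq:modularity_cond} is the morphism $\omega_i := \sum_{k\in\Irr_{\mathcal{E}}} |k|_{\mathcal{E}} \, (\text{$k$-labelled loop encircling the $i$-strand}) \in \End_{\mathcal{E}}(i)$. Since $i$ is simple, $\omega_i = \lambda_i \cdot \id_i$ for a scalar $\lambda_i\in\opk$, and the content of the lemma is: $\mathcal{E}$ is modular $\Leftrightarrow$ $\lambda_i = 0$ for all $i\neq\opid$ and $\lambda_\opid\neq 0$; and in that case $\lambda_\opid = \Dim\mathcal{E}$.

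The key computation is to express $\lambda_i$ through the $S$-matrix. Closing up the $i$-strand (taking the trace in $\mathcal{E}$) turns the encircling loop into the Hopf-link invariant, giving $|i|_{\mathcal{E}} \cdot \lambda_i = \sum_{k} |k|_{\mathcal{E}} \, s_{k,i}$, where $s$ is the $S$-matrix of Section~\ref{subsec:conventions}. Since $|i|_{\mathcal{E}}\neq 0$ in a (pre)modular fusion category (\cite[Prop.\ 4.8.4]{EGNO}, as recalled in Section~\ref{subsec:conventions}), we get $\lambda_i = |i|_{\mathcal{E}}^{-1}\sum_k |k|_{\mathcal{E}}\, s_{k,i}$. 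Note $|k|_{\mathcal{E}} = s_{\opid,k} = s_{k,\opid}$. So the vector $(\lambda_i)_{i}$ is, up to the nonzero diagonal rescaling by $|i|_{\mathcal{E}}^{-1}$, the product of the column $(s_{k,\opid})_k$ with the matrix $s$. Now I would argue both directions. If $\mathcal{E}$ is modular, $s$ is invertible; one uses the standard balancing/symmetry relations of the $S$-matrix — in particular $\sum_k s_{i,k} s_{k,j} = \big(\sum_k |k|_{\mathcal{E}}^2\big)\, \delta_{i,j^*}$ in the normalisation where $s_{i,j}=\tr_{\mathcal{E}}(c_{j,i}c_{i,j})$ — applied with $j=\opid$ (so $j^* = \opid$) to conclude $\sum_k s_{i,k}s_{k,\opid} = \Dim\mathcal{E}\cdot\delta_{i,\opid}$, hence $\lambda_i = \Dim\mathcal{E}\cdot\delta_{i,\opid}$ (using $|\opid|_{\mathcal{E}}=1$); in particular $\Dim\mathcal{E}\neq 0$ since $s$ is invertible, which also pins down $c=\Dim\mathcal{E}$. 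Conversely, if \eqref{eq:modularity_cond} holds with $c\neq 0$, then $\sum_k s_{i,k} s_{k,\opid} = c\cdot |i|_{\mathcal{E}}\,\delta_{i,\opid}$, i.e.\ the column vector $(s_{k,\opid})_k$ is sent by $s$ to a nonzero multiple of the standard basis vector $e_\opid$; I would then need to promote this single-column statement to invertibility of all of $s$.

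That promotion is the step I expect to be the main obstacle, and the cleanest way around it is to invoke the general structure theory: a premodular category in which the only "transparent" (centralising-everything) simple object is $\opid$ is modular. Concretely, if $\lambda_i = 0$ for some $i\neq\opid$ whose class is, say, that of an object centralising all of $\mathcal{E}$, one derives a contradiction; more robustly, I would cite the equivalence (e.g.\ \cite[Prop.\ 2.3.1 or Appendix]{BK}, or the corresponding statement in \cite{EGNO}) that the $S$-matrix of a premodular category is non-degenerate if and only if the Müger centre is trivial, and show that \eqref{eq:modularity_cond} with $c\neq0$ forces triviality of the Müger centre: any simple transparent $X$ would have its $k$-loop act as $|k|_{\mathcal{E}}\cdot\id$ inside the ring around a strand colored $X$... — alternatively, and perhaps most economically for a self-contained argument, observe that \eqref{eq:modularity_cond} says precisely that the idempotent $p := \tfrac{1}{c}\,\omega_{(-)}$ (the "$\Omega$-ring" projector) projects onto the $\opid$-isotypic part, which by a standard argument (the ring being the image of $\sum_k |k|_{\mathcal{E}}\,\id_k$ under the coend/canonical Hopf-algebra structure of $\mathcal{E}$) is equivalent to non-degeneracy of $s$; I would spell this out using naturality of the ring and the fact that in a fusion category the regular representation decomposes over $\Irr_{\mathcal{E}}$. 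Finally, for the uniqueness clause $c=\Dim\mathcal{E}$: apply \eqref{eq:modularity_cond} with $i=\opid$ and take the trace in $\mathcal{E}$, turning the left side into $\sum_k |k|_{\mathcal{E}}\cdot\tr_{\mathcal{E}}(\text{$k$-loop around trivial strand}) = \sum_k |k|_{\mathcal{E}}^2 = \Dim\mathcal{E}$ and the right side into $c\cdot\tr_{\mathcal{E}}(\id_\opid) = c$, so $c=\Dim\mathcal{E}$, which in particular is then nonzero.
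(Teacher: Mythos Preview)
The paper does not give its own proof of this lemma; it is stated with a reference to \cite{KO}, Lemma~4.6, and used as a black box. So there is nothing in the paper to compare against, and your task reduces to whether the sketch is sound.

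Your computation of $\lambda_i = |i|_{\mathcal{E}}^{-1}\sum_k s_{k,\opid}\,s_{k,i}$ and the deduction of $c=\Dim\mathcal{E}$ by setting $i=\opid$ are correct and clean. The forward direction is fine once you are willing to invoke the Verlinde-type identity $\sum_k s_{i,k}s_{k,j} = (\Dim\mathcal{E})\,\delta_{i,j^*}$ for modular $\mathcal{E}$; that is standard and consistent with the level of the citation.

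The converse is where your sketch wobbles: you correctly identify that the single-column statement $\sum_k s_{i,k}s_{k,\opid} = c\,|i|_{\mathcal{E}}\,\delta_{i,\opid}$ does not by itself give invertibility of $s$, and your proposed workaround via the M\"uger centre is the right idea, but you should make it precise rather than leave it as a list of alternatives. Concretely: from your last paragraph you already have $c=\Dim\mathcal{E}$, so the hypothesis forces $\Dim\mathcal{E}\neq 0$. If $X\in\Irr_{\mathcal{E}}$ is transparent then $s_{X,k}=|X|_{\mathcal{E}}\,|k|_{\mathcal{E}}$ for all $k$, hence $\lambda_X=\sum_k|k|_{\mathcal{E}}^2=\Dim\mathcal{E}\neq 0$, which by \eqref{eq:modularity_cond} forces $X\cong\opid$. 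Thus the symmetric centre of $\mathcal{E}$ is trivial, and you then cite the Brugui\`eres--M\"uger theorem (available in \cite{EGNO} over an algebraically closed field) that this is equivalent to non-degeneracy of $s$. Written this way the argument is complete; as it stands, your ``alternatively'' clauses about $\Omega$-ring projectors and coends are more heuristic than the centre argument and I would drop them.
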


A useful corollary of Lemma \ref{prp:modularity_cond} is the following identity, which holds for any modular fusion category $\mcC$ and an object $X\in\mcC$:
\begin{equation}
\label{eq:id_for_mod_cats}
\pic[1.5]{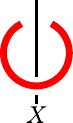} = \Dim \mcC \cdot \sum_{\a} \pic[1.5]{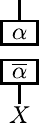},
\end{equation}
where $\a$ and $\abar$ run over a basis of $\mcC(\opid,X)$ and its dual (with respect to the composition pairing $\mcC(\opid,X)\otimes_\opk\mcC(X,\opid) \ra \mcC(\opid,\opid)\cong\opk$).
\begin{lem}
\label{lem:tr_inD}
Let $\opA$ be a simple orbifold datum in $\mcC$.
For $M \in \mcC_\opA$ and $f \in \End_{\mcC_\opA}(M)$ one has:
\begin{equation}\label{trace-formula-CA}
\tr_{\mcC_\opA} f \cdot \tr_\mcC \psi^4 = \tr_\mcC (\omega_M^2 \circ f).
\end{equation}
In particular, $|M|_{\mcC_\opA} \cdot \tr_\mcC \psi^4 = \tr_\mcC \omega_M^2$.
\end{lem}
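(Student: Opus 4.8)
The plan is to compute the categorical trace $\tr_{\mcC_\opA} f = \evt_M \circ (f \otimes \id_{M^*}) \circ \coev_M$ by unpacking the definitions \eqref{eq:evs_evts_in_D} of the dualities in $\mcC_\opA$ and the fact that these string diagrams are really diagrams in $\mcC$ (after passing through the projections/inclusions for $\otimes_A$). The result $\tr_{\mcC_\opA} f$ is a priori an element of $\End_{\mcC_\opA}(\opid_{\mcC_\opA}) = \End_{\mcC_\opA}(A)$, which by simplicity of $\opA$ is one-dimensional; the isomorphism $\End_{\mcC_\opA}(A) \cong \opk$ is implemented by applying $\tr_\mcC$ to the underlying endomorphism of $A$ in $\mcC$ together with a normalisation. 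So the strategy is: (i) express $\tr_{\mcC_\opA} f$, as a morphism $A \to A$ in $\mcC$, in terms of $\omega_M$, $f$, $\psi$ and the structure maps of $A$ and $T$; (ii) take $\tr_\mcC$ of both sides; (iii) identify the scalar prefactor as $\tr_\mcC \psi^4$ by specialising $f = \id_{\opid_{\mcC_\opA}}$ (or $M = A$ with one of the objects from Example \ref{ex:obj_A}), since in that case $\tr_{\mcC_\opA} \id = |\opid_{\mcC_\opA}|_{\mcC_\opA} = 1$ by simplicity.

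First I would write out $\coev_M: A \to M \otimes_A M^*$ and $\evt_M: M \otimes_A M^* \to A$ from \eqref{eq:evs_evts_in_D}, noting the $\psi_l$- and $\psi_r$-insertions that come with them, and compose with $f \otimes \id_{M^*}$; the ambient diagram lives in $\mcC$ and closes the $M$-strand into a loop with the prescribed $\psi$'s attached. Using the zig-zag identities in $\mcACA$ and the pivotal structure of $\mcC$, this loop simplifies. The key move is to recognise that the combination of $\psi$-insertions around the $M$-loop, together with the left/right $A$-actions on $M$, reorganises into exactly the operator $\omega_M$ (defined via $\pic[1.5]{31_omegaM_lhs.pdf}$) applied twice — this is where the $\omega_M^2$ on the right-hand side of \eqref{trace-formula-CA} comes from, and it parallels how $\omega_M$ and $\psi_0^2$ appeared when checking \eqrefT{1} for tensor products. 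Concretely I expect $\tr_{\mcC_\opA} f$ to equal, as an endomorphism of $A$ in $\mcC$, a fixed scalar multiple of $\eta \mapsto$ "close up the $M$-loop around $\omega_M^2 \circ f$", and then $\tr_\mcC$ of that closed loop is literally $\tr_\mcC(\omega_M^2 \circ f)$ up to that same scalar.

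To pin down the scalar I would evaluate both sides on the tensor unit. For $M = A$ with the $T$-crossing $\tau_i^1$ of Example \ref{ex:obj_A} and $f = \id_A$, the left-hand side is $\tr_{\mcC_\opA}\id_{\opid_{\mcC_\opA}} \cdot \tr_\mcC \psi^4 = 1 \cdot \tr_\mcC\psi^4$ (using simplicity for $|\opid_{\mcC_\opA}|_{\mcC_\opA}=1$), while the right-hand side is $\tr_\mcC \omega_A^2$; so consistency forces $\tr_\mcC \omega_A^2 = \tr_\mcC \psi^4$, which one checks directly from the definition of $\omega_A$ (it unwinds to $\psi^2$ composed with the relevant $A$-structure, whose $\mcC$-trace is $\tr_\mcC\psi^4$ by $\D$-separability and symmetry of $A$). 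Having fixed the normalisation this way, the general identity \eqref{trace-formula-CA} follows, and the "in particular" statement is the case $f = \id_M$, giving $|M|_{\mcC_\opA}\cdot\tr_\mcC\psi^4 = \tr_\mcC\omega_M^2$.

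The main obstacle I anticipate is the bookkeeping of the $\psi$- and $\phi$-insertions: the dualities in $\mcC_\opA$ carry $\psi_l^{M}$, $\psi_r^{M}$ factors, the pivotal structure in $\mcC$ contributes its own conventions, and one must verify that all of these combine precisely into $\omega_M^2$ with no leftover scalar — in particular that no stray powers of $\phi$ survive (the $\coev_M/\evt_M$ in \eqref{eq:evs_evts_in_D} are written without explicit $\phi$'s, so this should work out, but it needs care). As flagged in the paper, the right approach is to first ignore all $\psi$'s and $\phi$'s, verify that the underlying diagrammatic identity in $\mcC$ reduces $\tr_{\mcC_\opA}f$ to a closed $M$-loop around $f$, and only then reinstate the grey $\psi$-insertions and track that they assemble into $\omega_M^2$.
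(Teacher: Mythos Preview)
Your proposal is essentially correct and follows the same strategy as the paper: write out the $\mcC_\opA$-trace $\evt_M \circ (f\otimes\id)\circ\coev_M$ explicitly as an endomorphism of $A$ in $\mcC$ using the dualities \eqref{eq:evs_evts_in_D}, invoke simplicity of $\opid_{\mcC_\opA}=A$ to identify this with $\tr_{\mcC_\opA}(f)\cdot\id_A$, and then close up in $\mcC$.

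The paper's proof is more direct than yours: it is a single two-step picture equation with no undetermined constant and no need for your step~(iii). Your phrase ``take $\tr_\mcC$ of both sides'' is the one imprecise point. If you literally apply $\tr_\mcC$ to the equality $g=\tr_{\mcC_\opA}(f)\cdot\id_A$ you obtain $\tr_{\mcC_\opA}(f)\cdot|A|_\mcC$ on the right, not $\tr_{\mcC_\opA}(f)\cdot\tr_\mcC\psi^4$, and these differ in general (e.g.\ in the Drinfeld-centre example $|A|_\mcC=|\mcI|$ while $\tr_\mcC\psi^4=\Dim\mcS$). What the paper actually does in the ``$\Rightarrow$'' step is to close the open $A$-strand \emph{together with} the $\psi$-insertions already present in the explicit diagram coming from \eqref{eq:evs_evts_in_D}; on the $M$-side these assemble into $\omega_M^2$ as you anticipate, and on the $A$-side (after using that $g$ is proportional to $\id_A$) the same $\psi$'s collapse to $\psi^4$ on the closed $A$-loop, yielding $\tr_\mcC\psi^4$ directly. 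Your consistency check with $M=A$, $f=\id_A$ (using $\omega_A=\psi^2$) is correct and would indeed recover the right constant, so your argument closes; it is just a detour around a bookkeeping step that can be read off straight from the diagram.
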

\begin{proof}
From expressions \eqref{eq:evs_evts_in_D} one gets:
\begin{equation*}
\pic[1.5]{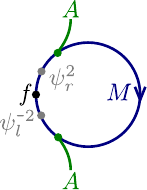} 
\overset{(*)}= 
\tr_{\mcC_\opA}f \cdot \pic[1.5]{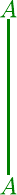}
\quad\Ra\quad
\pic[1.5]{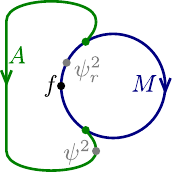} = \tr_{\mcC_\opA}f \pic[1.5]{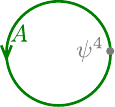}
~,
\end{equation*}
where in $(*)$ we used that $A$ is simple in $\mcC_\opA$.	
\end{proof}
\begin{thm}
\label{thm:CA_is_modular}
Let $\opA$ be a simple orbifold datum in $\mcC$.
Then
\begin{enumerate}[i)]
\item
$\mcC_\opA$ is a modular fusion category;
\item
$\tr_\mcC \psi^4 \neq 0$ and $\Dim \mcC_\opA = \frac{\Dim\mcC}{\phi^4 \cdot (\tr_\mcC\psi^4)^2}$.
\end{enumerate}
\end{thm}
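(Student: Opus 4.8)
The plan is to establish modularity by verifying the criterion of Lemma~\ref{prp:modularity_cond}, and then to read off the dimension formula from the value of the constant $c$ appearing there. Concretely, for a simple object $i \in \Irr_{\mcC_\opA}$ one has to compute the trace over $\mcC_\opA$ of the double braiding $c_{N,i} \circ c_{i,N}$ summed against quantum dimensions of all simple $N$, and show it vanishes unless $i \cong \opid_{\mcC_\opA} = A$. The natural way to make such a sum computable is to realise ``summing over all simples of $\mcC_\opA$ with their dimensions'' through the pipe functor $P = H_{12}\circ H_2 : \mcACA \to \mcC_\opA$ and its biadjoint $U$: since $P$ is dominant (its image generates $\mcC_\opA$ because the separable biadjunction makes every object a retract of something in the image of $P$), one can replace a sum over $\Irr_{\mcC_\opA}$ by an expression built from $P$, $U$, and the averaging idempotent~\eqref{eq:average}. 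The simplified braiding formulas~\eqref{eq:braidings_w_FM} for $c_{PM,N}$ and $c^{-1}_{N,PM}$ are exactly what is needed to turn the resulting string diagrams into diagrams in $\mcC$ involving only $A$, $T$, $\alpha$, $\abar$, $\psi$, $\phi$ and a ``red loop'' summing over $\Irr_\mcC$.

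The key steps, in order, are as follows. First, use Lemma~\ref{lem:tr_inD} to rewrite $\tr_{\mcC_\opA}$ of any endomorphism as $(\tr_\mcC\psi^4)^{-1}\tr_\mcC(\omega_M^2 \circ (-))$, reducing everything to traces in $\mcC$; in particular this already forces $\tr_\mcC\psi^4 \neq 0$ once we know $\mcC_\opA$ has at least one object of nonzero dimension (the unit $A$, whose dimension in $\mcC_\opA$ is nonzero since $\mcC_\opA$ is fusion). Second, express the modularity sum $\sum_{N\in\Irr_{\mcC_\opA}} |N|_{\mcC_\opA}\,(\text{double braiding with }i)$ via the pipe functor: write $\Id_{\mcC_\opA}$, up to the invertible normalisation coming from separability of $(P,U)$, as a retract of $PU$, so the sum over $N$ becomes a single diagram with a ``$U$-loop'' that, after applying~\eqref{eq:braidings_w_FM}, becomes a genuine $\mcC$-diagram carrying the red $\Irr_\mcC$-loop around a $T$-strand. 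Third, apply the modularity of $\mcC$ itself — in the form of identity~\eqref{eq:id_for_mod_cats} — to collapse the red loop; this produces a factor $\Dim\mcC$ together with a sum over a basis of $\mcC(\opid, T\otimes\cdots)$ or an endomorphism projector that, by the orbifold axioms \eqrefO{1}--\eqrefO{8} (especially \eqrefO{6}, \eqrefO{8}) and the object axioms \eqrefT{1}--\eqrefT{7}, forces the configuration to factor through $\opid_{\mcC_\opA}$. Fourth, keep careful track of all $\phi$- and $\psi$-insertions: each use of the braiding~\eqref{eq:D_braiding} and each 3-cell contributes a $\phi$, each $\omega$ contributes $\psi^2$'s, and one expects exactly four $\phi$'s and the two $\tr_\mcC\psi^4$ factors (one from Lemma~\ref{lem:tr_inD} on the $\mcC_\opA$-side, one from the bulk of the diagram) to assemble into $c = \Dim\mcC \cdot \phi^{-4}\cdot(\tr_\mcC\psi^4)^{-2}$ — wait, rather $c$ will come out so that, compared with the forced $c = \Dim\mcC_\opA$ of Lemma~\ref{prp:modularity_cond}, one reads off $\Dim\mcC_\opA = \Dim\mcC/(\phi^4(\tr_\mcC\psi^4)^2)$. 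Finally, conclude: the criterion~\eqref{eq:modularity_cond} holds with $c\neq 0$ (as $\Dim\mcC\neq 0$ would need justification — but in fact $c\neq 0$ follows because $\mcC_\opA$ being fusion guarantees $\Dim\mcC_\opA\neq 0$ whenever the field has characteristic zero, and in general one argues $c\neq0$ directly from the nondegeneracy transported through the adjunction), so $\mcC_\opA$ is modular, and the dimension formula drops out of the computation of $c$.

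I expect the main obstacle to be Step three: correctly identifying which diagram in $\mcC$ results after pushing the $\Irr_{\mcC_\opA}$-sum through the pipe functor, and then showing that applying $\mcC$-modularity to it genuinely isolates the $\delta_{i,\opid}$ term. The delicate point is that ``summing over $\Irr_{\mcC_\opA}$ weighted by dimension'' is not literally $PU$ but $PU$ corrected by the averaging idempotent and the separability normalisation; getting the bookkeeping right so that the residual $T$-network, after \eqref{eq:id_for_mod_cats}, simplifies via \eqrefO{6}/\eqrefO{8} to something proportional to $\id_A$ precisely when $i\cong A$ is where the real content of the orbifold axioms is used. Tracking the scalar $c$ through this — all the $\phi$'s from braidings and 3-cells, all the $\psi^4$'s — is the routine-but-error-prone part that I would do last, after the structural argument is in place.
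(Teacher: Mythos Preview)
Your plan is essentially the paper's proof: verify Lemma~\ref{prp:modularity_cond} by converting the $\Irr_{\mcC_\opA}$-sum through the pipe functor $P$ (and then further through $\Ind_A$, which you leave implicit) into an $\Irr_\mcC$-sum, use the simplified braidings~\eqref{eq:braidings_w_FM}, apply $\mcC$-modularity via~\eqref{eq:id_for_mod_cats}, and simplify with the orbifold axioms until the averaging projector~\eqref{eq:average} onto $\mcC_\opA(A,\D)$ emerges.

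One point does not work as you state it: your argument for $\tr_\mcC\psi^4\neq 0$. Applying Lemma~\ref{lem:tr_inD} to the tensor unit gives $|A|_{\mcC_\opA}\cdot\tr_\mcC\psi^4 = \tr_\mcC\omega_A^2 = \tr_\mcC\psi^4$, which is vacuous if $\tr_\mcC\psi^4=0$; and in positive characteristic a fusion category can have vanishing global dimension, so the fallback via $\Dim\mcC_\opA\neq 0$ is not available either. The paper handles this by never dividing prematurely: it multiplies $L_\D$ by $\tr_\mcC\psi^4$ at the outset, carries that factor through the whole computation, and arrives at $\tr_\mcC(\omega^2_\D\circ L_\D)\cdot\tr_\mcC\psi^4 = (\Dim\mcC/\phi^4)\cdot\delta_{A,\D}$. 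Since $\Dim\mcC\neq 0$ for a modular fusion category, setting $\D=A$ forces $\tr_\mcC\psi^4\neq 0$ as a \emph{consequence} of the calculation, and only then does one divide to obtain $L_\D$ and read off $c$. You should reorganise your first step accordingly.
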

\begin{proof}
Let $i\in\mcC$, $\mu\in\mcACA$, $\D\in\mcC_\opA$ be simple objects. 
One has the following decompositions (the label over the isomorphism sign indicates the category it holds in)
\begin{align}
&\mathrm{a)}~~ \mu \overset{\mcC}{\cong} \bigoplus_{k\in\Irr_\mcC} k \otimes \mcC(k,\mu) \ ,
&&\mathrm{b)}~~ \D \overset{\mcACA}{\cong} \bigoplus_{\nu\in\Irr_{\mcACA}} \nu \otimes \mcACA(\nu,\D) \ , \nonumber\\
&\mathrm{c)}~~AiA \overset{\mcACA}{\cong} \bigoplus_{\nu\in\Irr_{\mcACA}} \nu \otimes \mcC(i,\nu) \ ,
&&\mathrm{d)}~~ P(\mu) \overset{\mcC_\opA}{\cong} \bigoplus_{\Lambda\in\Irr_{\mcC_\opA}} \Lambda \otimes \mcACA(\mu, \Lambda) \ ,
\label{main-proof-object-decomp}
\end{align}
Here, the forgetful functors $U : \mcC_\opA \to \mcACA$ and $U_{AA} : \mcACA \to \mcC$ are not written out.
The isomorphisms in the second row follow from the biadjunctions $\Ind_A\dashv U_{AA}$ and $P \dashv U$, respectively.
For a simple $\mu\in\mcACA$ and $f\in\End_{AA}(\mu)$, let $\langle f \rangle\in\opk$ be such that $f = \langle f \rangle \cdot \id_\mu$.
For a fixed simple $\D\in\mcC_\opA$, let $L_\D \in \End_{\mcC_\opA}(\D)$ be the morphism as on the left hand side of \eqref{eq:modularity_cond} (now understood as a string diagram in $\mcC_\opA$).
Use Lemma \ref{lem:tr_inD} and the decompositions above to obtain the equalities (in this computation, all string diagrams are written in $\mcC_\opA$)
\begin{align}
&L_\D \cdot \tr_\mcC \psi^4
= \sum_{\Lambda\in\Irr_{\mcC_\opA}} |\Lambda|_{\mcC_\opA} \cdot \tr_\mcC \psi^4                \pic[1.5]{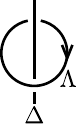}
 \overset{\eqref{trace-formula-CA}}= \sum_{\Lambda\in\Irr_{\mcC_\opA}} \tr_\mcC \omega^2_\Lambda                                  \pic[1.5]{34_mod_proof_01.pdf}
 \nonumber\\
&\overset{\text{(\ref{main-proof-object-decomp}\,b)}}= \sum_{\Lambda,\nu} \tr_\mcC \omega^2_\nu \cdot \dim\mcACA(\nu,\Lambda)                       \pic[1.5]{34_mod_proof_01.pdf}
 \overset{\text{(\ref{main-proof-object-decomp}\,d)}}= \sum_{\nu\in\Irr_{\mcACA}} \tr_\mcC \omega^2_\nu                                             \pic[1.5]{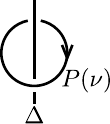}
  \nonumber\\
&= \sum_{\nu\in\Irr_{\mcACA}} |\nu|_\mcC \cdot \langle\omega^2_\nu\rangle                                   \pic[1.5]{34_mod_proof_02.pdf}
 = \sum_{\nu\in\Irr_{\mcACA}} |\nu|_\mcC                                                        \pic[1.5]{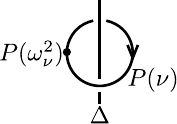}
  \nonumber\\
&\overset{\text{(\ref{main-proof-object-decomp}\,a)}}= \sum_{\nu,k} |k|_\mcC \cdot \dim\mcC(k,\nu)                                                  \pic[1.5]{34_mod_proof_03.pdf}
 \overset{\text{(\ref{main-proof-object-decomp}\,c)}}= \sum_{k\in\Irr_\mcC} |k|_\mcC                                                                \pic[1.5]{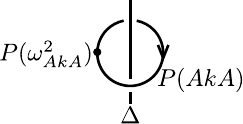} \ .
 \label{eq:LDelta-psi4}
\end{align}
Next, use the expressions \eqref{eq:evs_evts_in_D} for (co-)evaluation maps and \eqref{eq:braidings_w_FM} for braidings to compute (in the following the string diagrams are again in $\mcC$):
\begin{align}
&\tr_\mcC \big(\omega^2_\D \circ L_\D \cdot \tr_\mcC \psi^4 \big)
 \overset{\eqref{eq:LDelta-psi4}}= \sum_{k\in\Irr_\mcC} |k|_\mcC \pic[1.5]{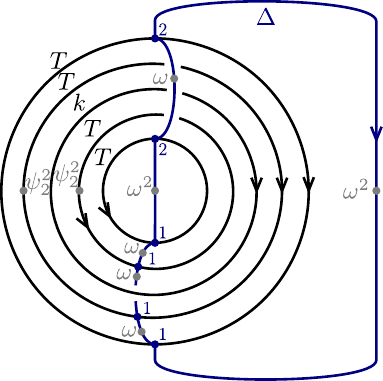}
\nonumber \\
&
\overset{\text{deform}}= 
\pic[1.5]{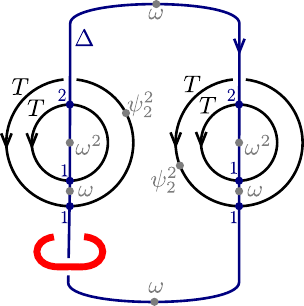}
\stackrel{\eqref{eq:id_for_mod_cats}}{=} 
 \Dim\mcC\sum_\a \pic[1.5]{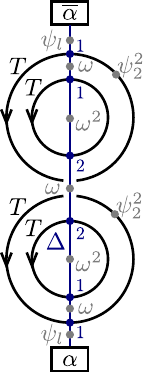}
\nonumber \\
&\stackrel{(*)}{=}
  \frac{\Dim\mcC}{\phi^2} \sum_\a \pic[1.5]{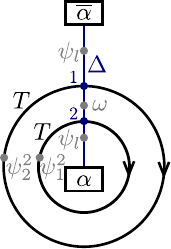}
\overset{(**)}=
\frac{\Dim\mcC}{\phi^2} \sum_\a \pic[1.5]{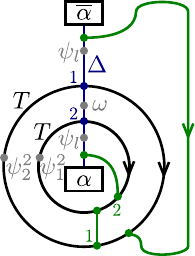}
\nonumber \\
&\stackrel{\eqref{eq:unit_T-cross}}{=} 
 \frac{\Dim\mcC}{\phi^2} \sum_\a \pic[1.5]{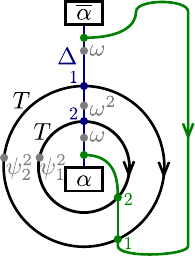}
 = \frac{\Dim\mcC}{\phi^4} \sum_\a \phi^2 \pic[1.5]{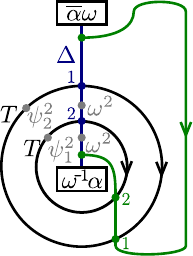}
 ~.
\label{eq:modular-aux-1} 
\end{align}
Step $(**)$ is best checked in reverse:
the $A$-strings can be removed using the intertwining properties of $\tau_i$ and $\Delta$-separability.
Step $(*)$ consists of two computations, each of which combines two of the four $T$-loops into one. We will only show the first:
\begin{align*}
										   \pic[1.5]{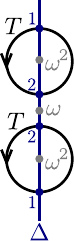} 
&\stackrel{\eqrefO{5}}{=}                  \pic[1.5]{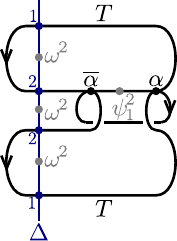}
 \underset{\eqrefT{6}}{\stackrel{\eqref{eq:T3bp}}{=}}
        \pic[1.5]{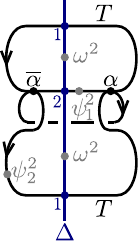}
\overset{\text{deform}}=
\pic[1.5]{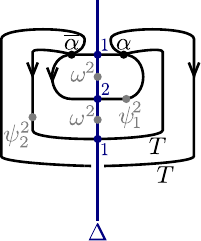}\\ 
&
\underset{\eqrefT{5}}{\stackrel{\eqrefT{2}}{=}}
\pic[1.5]{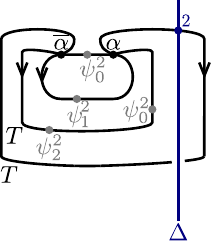}
 \stackrel{\eqrefO{7}}{=} 			       \pic[1.5]{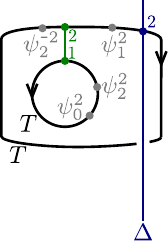}
 \stackrel{\eqrefO{8}}{=}                  \pic[1.5]{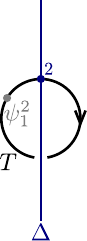} \cdot \phi^{-1}.
\end{align*}
The last term in \eqref{eq:modular-aux-1} contains the average of a morphism as defined in~\eqref{eq:average} which projects onto $\mcC_\opA(A,\D)$. The sum over $\alpha$ therefore computes the trace of this projection and one has
\begin{equation}\label{eq:CA-modular-aux2}
\tr_\mcC \big(\omega^2_\D \circ L_\D \cdot \tr_\mcC \psi^4 \big) =
\frac{\Dim \mcC}{\phi^4} \cdot \dim\mcC_\opA(A,\D) =
\frac{\Dim \mcC}{\phi^4} \cdot \d_{A,\D}
~.
\end{equation}
It follows that $\tr_\mcC \psi^4 \neq 0$, as the right hand side is non-zero for $A=\D$.
This proves the first claim in part (ii) of the theorem.

Recall from Section~\ref{subsec:conventions} that since $\mcC_\opA$ is fusion, $|\D|_{\mcC_\opA} \neq 0$ for all simple $\D$.
Using this, we finally get
\begin{align}
L_\D &= 
\frac{\tr_{\mcC_\opA} L_\D \cdot \tr_\mcC \psi^4}{|\D|_{\mcC_\opA} \cdot \tr_\mcC \psi^4} \cdot \id_\D
\overset{\eqref{trace-formula-CA}}= 
\frac{\tr_\mcC (\omega^2_\D \circ L_\D)}{|\D|_{\mcC_\opA} \cdot \tr_\mcC \psi^4} \cdot \id_\D 
\\
&\overset{\eqref{eq:CA-modular-aux2}}=
\frac{\Dim \mcC}{|\D|_{\mcC_\opA} \cdot \phi^4 \cdot (\tr_\mcC \psi^4)^2}\cdot \d_{A,\D} \cdot \id_\D 
\ .
\end{align}
Lemma~\ref{prp:modularity_cond} now implies part (i) and the remaining claim in part (ii).
\end{proof}
\section{Examples} \label{sec:examples}
In this section we look into examples \ref{eg1} and \ref{eg2} in the introduction, that is, the cases of an orbifold datum obtained from a commutative simple $\D$-separable Frobenius algebra, and from a spherical fusion category.
\subsection{Local modules}\label{sec:locmod}
Let $A\in\mcC$ be a commutative $\D$-separable Frobenius algebra.
We call an $A$-module $M$ \it{local} (or \it{dyslectic}) if
\begin{equation}
\label{eq:loc_cond}
\pic[1.5]{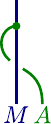} = \pic[1.5]{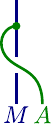}.
\end{equation}
The category of local modules will be denoted by $\mcC_A^\loc$ (see \cite{FFRS} for more details and further references on local modules).

Note that since $A$ is commutative, for any $A$-module $M$ the morphisms on both sides of \eqref{eq:loc_cond} define right $A$-actions on $M$, which yields two bimodules $M_+$ and $M_-$.
Local modules are precisely those for which one has $M_+ = M_-$.
One uses the tensor product of bimodules to equip $\mcC_A^\loc$ with tensor product and duals.
Furthermore, $\mcC$ induces the braiding and the twists on $\mcC_A^\loc$.
It was proven in \cite{KO} that if $A$ is \it{haploid}
(i.e.\ $\dim \mcC(\opid, A) = 1$, cf.~\cite{FS}), 
then $\mcC_A^\loc$ is in fact a modular fusion category.

For the remainder of the section, let $A$ be a haploid $\D$-separable commutative Frobenius algebra in a modular fusion category $\mcC$.
Then it is automatically symmetric (see \cite[Cor.\,3.10]{FRS1}), and as shown in \cite[Sec.\,3.4]{CRS3}, it gives an orbifold datum
\begin{equation}
\label{eq:loc_mods_orb_data}
\opA ~=~ \big(\, A
\,,\, 
T = A
\,,\,  
\a = \abar = \D\circ\mu
\,,\, 
\psi = \id_A
\,,\,  
\phi = 1
\,\big) \ ,
\end{equation}
where one uses commutativity of $A = T$ to treat it as $A$-$AA$-bimodule.
The rest of the section is dedicated to proving the following
\begin{thm}
\label{thm:CA_equiv_CAloc}
Let $A$ be a haploid $\D$-separable
commutative Frobenius algebra in $\mcC$, and let 
$\opA$ be the orbifold datum in \eqref{eq:loc_mods_orb_data}.
Then  $\opA$ is simple and
one has $\mcC_\opA \cong \mcC_A^\loc$ as $\opk$-linear ribbon categories.
\end{thm}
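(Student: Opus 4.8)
The plan is to construct a pair of mutually inverse $\opk$-linear functors between $\mcC_A^\loc$ and $\mcC_\opA$, then check that one of them respects tensor product, braiding, twist and duality, reading off simplicity of $\opA$ along the way. First I would unpack what an object of $\mcC_\opA$ amounts to for the orbifold datum \eqref{eq:loc_mods_orb_data}. Since $T=A$ and $A$ is commutative, the relevant partial tensor products collapse canonically, $M\otimes_0 T\cong M$ via the right action of $M$ and $T\otimes_i M\cong M$ via the left action of $M$ for $i=1,2$, so a $T$-crossing $\tau_i$ becomes a bimodule endomorphism of $M$ (with relabelled actions); and because $\psi=\id_A$, $\phi=1$, all the $\psi$- and $\omega_i$-insertions in Figures~\ref{fig:orb_ids} and~\ref{fig:obj_ids} are trivial. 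The upshot of \eqrefT{1}--\eqrefT{7} in this situation is that the datum $(M,(\tau_i,\overline{\tau_i})_{i=1,2})$ carries exactly the information of a local $A$-module: \eqrefT{4},\eqrefT{5} force $\overline{\tau_i}=\tau_i^{-1}$, \eqrefT{6},\eqrefT{7} tie $\tau_1$ (resp.\ $\tau_2$) to the bimodule structure of $M$ and the braiding of $\mcC$, recovering the right action from the left action via $c_{M,A}$ (resp.\ $c_{A,M}^{-1}$) — i.e.\ the bimodules $M_+$, $M_-$ of Section~\ref{sec:locmod} — so that no genuine new data is present; \eqrefT{1} (on $\tau_1$) and \eqrefT{3} (on $\tau_2$), using $\a=\abar=\D\circ\mu$, reduce to the hexagon identities for $c$ and hold automatically; and \eqrefT{2}, which couples $\tau_1$ with $\tau_2$, becomes the equality $M_+=M_-$, i.e.\ the statement that $M$ is local.

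With this dictionary I would define $G:\mcC_\opA\ra\mcC_A^\loc$ by sending $(M,\tau_1,\tau_2,\overline{\tau_1},\overline{\tau_2})$ to the underlying left $A$-module $M$, which is local by the previous step, and acting as the identity on morphisms (a bimodule map intertwining the $\tau_i$ is in particular a map of left modules, and conversely any $\mcC_A^\loc$-morphism intertwines the braiding-prescribed $\tau_i$ by naturality of $c$). In the other direction, $F:\mcC_A^\loc\ra\mcC_\opA$ sends a local module $M$ to the bimodule $M_+=M_-$ equipped with the $T$-crossings prescribed above, with $\overline{\tau_i}:=\tau_i^{-1}$; verifying \eqrefT{1}--\eqrefT{7} for $F(M)$ uses only naturality and the hexagons of $c$ together with locality of $M$. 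By construction $G\circ F=\Id$, and $F\circ G=\Id$ since the $\tau_i$ of any object of $\mcC_\opA$ are forced to be the braiding formulas, so $F$ and $G$ are mutually inverse $\opk$-linear equivalences. In particular $\End_{\mcC_\opA}(\opid_{\mcC_\opA})\cong\End_{\mcC_A^\loc}(A)\cong\mcC(\opid,A)=\opk$ because $A$ is haploid, so $\opA$ is simple.

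Finally I would transport the ribbon structure along $F$. The tensor product of $\mcC_\opA$ has underlying bimodule $M\otimes_A N$, which is exactly the tensor product of $\mcC_A^\loc$; since $\psi=\id$ and $\a=\D\circ\mu$, one checks that the $T$-crossings \eqref{eq:tau^MN_i} of $M\otimes_A N$ are again the braiding formulas, so $F$ is monoidal with $F(A)=\opid_{\mcC_\opA}$. The duality morphisms \eqref{eq:evs_evts_in_D} reduce, again because $\psi=\id$, to the induced dualities on $A$-modules, so $F$ is pivotal. The one genuine computation is the braiding: the surface-defect bubble defining $c_{M,N}$ in \eqref{eq:D_braiding}, built from $T=A$, $\a=\abar=\D\circ\mu$ and $\phi=1$, should be contracted — using \eqrefO{8} and $\D$-separability of $A$ — to the braiding $c_{M,N}$ of $\mcC$ descended to local modules, after which the twist \eqref{eq:twist} reduces to $\theta_M$ of $\mcC$. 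This shows $F$ is a ribbon equivalence, which together with the simplicity of $\opA$ proves the statement; as a byproduct, Theorem~\ref{thm:CA_is_modular} re-derives the modularity of $\mcC_A^\loc$ originally due to \cite{KO}.

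The main obstacle I expect is the braiding computation in the last step — reducing the $A$-bubble in \eqref{eq:D_braiding} to the honest braiding of $\mcC$, where the precise shape $\a=\abar=\D\circ\mu$ and condition \eqrefO{8} must be used carefully — and, more subtly, organising the identifications in the first step so that the axioms \eqrefT{1}--\eqrefT{7} transparently become ``$M$ is a local $A$-module'', in particular so that $G$ is manifestly valued in $\mcC_A^\loc$ rather than merely in $\mcACA$. Everything else amounts to routine diagram chasing, simplified throughout by $\psi=\id$ and $\phi=1$.
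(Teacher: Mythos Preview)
Your overall strategy matches the paper's: build an explicit functor $F:\mcC_A^\loc\to\mcC_\opA$ by equipping a local module with canonical $T$-crossings, check it is fully faithful (whence simplicity of $\opA$ via haploidness), verify it respects the ribbon structure, and then prove essential surjectivity by showing that any object of $\mcC_\opA$ is forced to have the canonical $T$-crossings. The paper also treats the ribbon compatibilities as routine, so your emphasis on the braiding bubble as the ``one genuine computation'' is somewhat misplaced, but harmless.

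Where your plan has a real gap is in the dictionary of which axiom does which work, and this affects the core step (essential surjectivity of $F$, equivalently well-definedness of your $G$). You claim that \eqrefT{6}, \eqrefT{7} tie the $\tau_i$ to the braiding so that ``no genuine new data is present,'' that \eqrefT{1}, \eqrefT{3} then ``reduce to hexagon identities and hold automatically,'' and that \eqrefT{2} encodes locality. In the paper's argument the logic runs quite differently. The relation between the left and right $A$-actions on $M$ via the braiding of $\mcC$ --- and hence locality --- comes from the requirement that $\tau_1,\tau_2$ be $A$-$AA$-bimodule morphisms; this is a structural condition built into the definition of $\mcC_\opA$, not one of \eqrefT{1}--\eqrefT{7}, and \eqrefT{6}, \eqrefT{7} are not used here. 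After collapsing the relative tensor products, each $\tau_i$ becomes an invertible endomorphism $\widehat{\tau_i}:M\to M$, and it is precisely \eqrefT{1} that forces $\widehat{\tau_1}=\widehat{\tau_1}\circ\widehat{\tau_1}$, hence $\widehat{\tau_1}=\id_M$; likewise \eqrefT{3} forces $\widehat{\tau_2}=\id_M$. So \eqrefT{1} and \eqrefT{3} are exactly the constraints that eliminate the extra data --- they do not hold automatically. If you followed your roadmap as written you would not find anything in \eqrefT{6}, \eqrefT{7} that pins down $\tau_i$, and the claim $F\circ G=\Id$ would remain unproved.
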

\begin{proof}
Define a functor $F:\mcC_A^\loc \ra \mcC_\opA$ as follows:
Given a local module $M$, equip it with the canonical bimodule structure and define the $T$-crossings to be
\begin{equation}
\label{eq:Aloc_T_cross}
\pic[1.5]{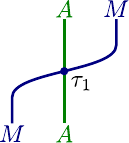} = \pic[1.5]{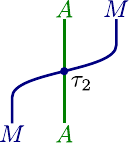} := \pic[1.5]{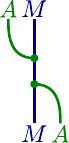}.
\end{equation}
All of the axioms then hold and are easy to check, e.g.
\begin{align*}
\pic[1.5]{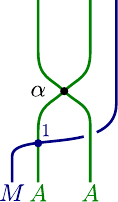}
= \pic[1.5]{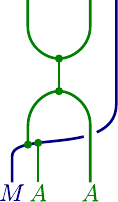}
= \pic[1.5]{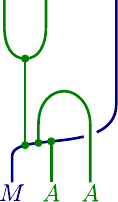}
\overset{(*)}= \pic[1.5]{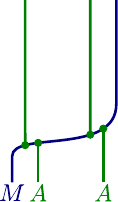}
= \pic[1.5]{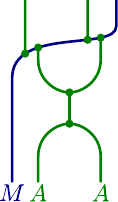}
= \pic[1.5]{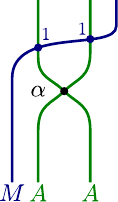} \ .
\end{align*}
In $(*)$ one uses the fact that the right action of $M$ comes from \eqref{eq:loc_cond}.
A morphism in $\mcC_\opA$ is precisely an $A$-module morphism, i.e.\ $F$ is fully faithful.
Since $A$ is simple as a left module over itself (because $A$ is haploid), this shows that the orbifold datum $\opA$ is simple.

It is easy to check that $F$ preserves tensor products, braidings and twists, hence it only remains to check that it is an equivalence.
We show that $F$ is essentially surjective.

Let $(M,\tau_1,\tau_2, \overline{\tau_1}, \overline{\tau_2}) \in \mcC_\opA$.
Since $\tau_1$, $\tau_2$ are $A$-$AA$-bimodule morphisms, one has:
\begin{equation}
\label{eq:Aloc-A-AA-bimod_conds}
\pic[1.5]{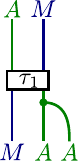} = \pic[1.5]{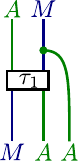} = \pic[1.5]{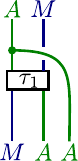}, \qquad
\pic[1.5]{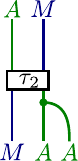} = \pic[1.5]{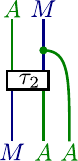} = \pic[1.5]{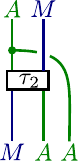}.
\end{equation}
For example in the first equality for $\tau_1$ we think of the right $A$ action as the action of the first tensor factor of $A \otimes A$ and in the second equality as the action of the second tensor factor.
Since $M \cong A \otimes_A M \cong M \otimes_A A$, the $T$-crossings $\tau_1$, $\tau_2$ can be recovered from the following invertible $A$-module morphisms $\widehat{\tau_1}, \widehat{\tau_2}: M \ra M$:
\begin{equation}
\widehat{\tau_i} := \pic[1.5]{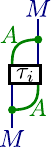}, \qquad i=1,2.
\end{equation}
We can then relate the left and right action on $M$ as follows:
\begin{equation*}
\pic[1.5]{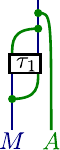} 
\overset{\eqref{eq:Aloc-A-AA-bimod_conds}}= 
\pic[1.5]{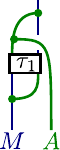}  = \pic[1.5]{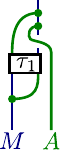}  = \pic[1.5]{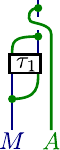} 
\quad\Ra\quad
\pic[1.5]{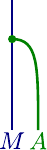}  = \pic[1.5]{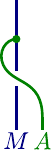} .
\end{equation*}
Similarly, the identities for $\tau_2$ in \eqref{eq:Aloc-A-AA-bimod_conds} imply
\begin{equation*}
\pic[1.5]{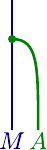} = \pic[1.5]{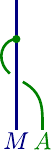}.
\end{equation*}
Hence $M$ is a local module with the canonical bimodule structure.
It remains to show that the $T$-crossings are as in \eqref{eq:Aloc_T_cross}.
Using the identity \eqrefT{1} one has
\begin{equation*}
\pic[1.5]{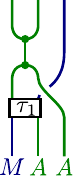} = \pic[1.5]{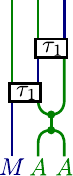} \quad\Lra\quad
\pic[1.5]{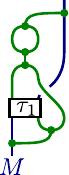} = \pic[1.5]{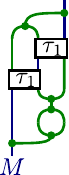}.
\end{equation*}

Examining both sides of the last equality gives:
\begin{align*}
&\text{left hand side: }
\pic[1.5]{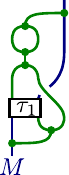} = \pic[1.5]{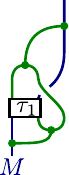} = \pic[1.5]{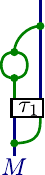} = \widehat{\tau_1},\\
&\text{right hand side: }
\pic[1.5]{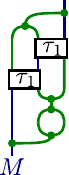} = \pic[1.5]{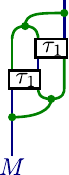} = \pic[1.5]{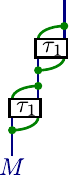} = \widehat{\tau_1} \circ \widehat{\tau_1}.
\end{align*}
Hence one has $\widehat{\tau_1} = \widehat{\tau_1} \circ \widehat{\tau_1}$ and since it is invertible, $\widehat{\tau_1} = \id_M$, which in turn implies that $\tau_1$ is precisely as in \eqref{eq:Aloc_T_cross}.
The identity \eqrefT{3} implies the same for $\tau_2$.
\end{proof}
Combining the above result with Theorem~\ref{thm:CA_is_modular} gives an independent proof that $\mcC_A^\loc$ is modular.
For the orbifold datum \eqref{eq:loc_mods_orb_data} one has $\tr_\mcC \psi^4 = |A|_\mcC$ and $\phi = 1$, so that the second part of Theorem \ref{thm:CA_is_modular} yields
\begin{equation}
\Dim \mcC_A^\loc = \frac{\Dim \mcC}{|A|_\mcC^2} \ .
\end{equation}
Both, modularity and the above dimension formula are already known from \cite{KO}.
\subsection{Drinfeld centre}\label{Drinfeld-example}
In this section, fix a spherical fusion category $\mcS$ with $\Dim\mcS\neq 0$ (this condition is only relevant if $\operatorname{char} \opk \neq 0$ see Section~\ref{subsec:conventions}).
We will not assume $\mcS$ to be strict; its associator and unitors will be denoted by $a_{X,Y,Z}: (XY)Z \ra X(YZ)$, $l_X:\opid X \ra X$, $r_X: X\opid \ra X$, for all $X,Y,Z\in\mcS$.

Recall that the Drinfeld centre $\mcZ(\mcS)$ consists of pairs $(X,\g)$, where $X\in\mcS$ and $\g: X \otimes - \Ra -\otimes X$ is a natural transformation, satisfying the hexagon identity,
i.e.\ $(\id_U \otimes \g_V)\circ a_{UXV}\circ(\g_U \otimes \id_V) = a_{U,V,X}\circ \g_{UV}\circ a_{X,U,V}$ for all $U,V \in \mcS$.
A morphism $f:(X,\g) \ra (Y,\d)$ is a morphism $f:X\ra Y$ in $\mcS$, such that $(\id_U \otimes f)\circ\g_U = \d_U\circ(f\otimes \id_U)$ for all $U\in\mcS$.

$\mcZ(\mcS)$ is a ribbon category with monoidal product
\begin{equation}
(X,\g) \otimes (Y,\d) ~:=~ ( X \otimes Y, ~\Gamma^{XY} ).
\end{equation}
where, for all $U\in\mcS$,
\begin{equation}
\Gamma^{XY}_U :=
\left[
\begin{array}{rl}
(XY)U &\xra{a_{X,Y,U}} X(YU) \xra{\id_X\otimes\d} X(UY) \xra{a^{-1}_{X,U,Y}}(XU)Y\\
	  &\xra{\g_U \otimes \id_Y} (UX)Y \xra{a_{U,X,Y}} U(XY)
\end{array}
\right] \ .
\end{equation}
The braiding and the twist are given by
\begin{align}
\label{eq:ZS_br_tw}
&\big[c_{(X,\g),(Y,\d)}:(X\otimes Y, \Gamma^{XY}) \ra (Y\otimes X, \Gamma^{YX})\big]
:= \big[ X\otimes Y \xra{\g_Y} Y \otimes X  \big] ,\\ \nonumber
& \theta_{(X,\g)}
:= \left[ 
\begin{array}{l}
X \xra{r_X^{-1}} X\opid \xra{\id_X\otimes \coev_X} X(XX^*) \xra{a_{X,X,X^*}^{-1}} (XX)X^*\\
~~~\xra{\g_X\otimes \id_{X^*}} (XX)X^* \xra{a_{X,X,X^*}} X(XX^*)  \xra{\id_X \otimes \evt_X} X\opid \xra{r_X} X
\end{array}
\right].
\end{align}

Let us recall from \cite[Sec.\,4]{CRS3} how one can associate to $\mcS$ an orbifold datum $\opA^\mcS$ in the trivial modular fusion category $\Vect_\opk$ of finite dimensional $\opk$-vector spaces.\footnote{
	Our conventions here differ from those in \cite{CRS3}. For example, instead of $T = \bigoplus_{i,j,l} \mcS(l,ij)$ as in \eqref{eq:sph_cat_orb_data}, in \cite{CRS3} the bimodule $\bigoplus_{i,j,l} \mcS(ij,l)$ is used.
	The convention used here is better suited for the equivalence proof.
}
For brevity, denote $\mcI := \Irr_\mcS$ and $|X| := |X|_\mcS$
(recall the conventions in Section~\ref{subsec:conventions}).
For each $i\in\mcI$, fix square roots $|i|^{1/2}$ and define the natural transformation $\psi:\Id_\mcS \Ra \Id_\mcS$ by taking for each $X\in\mcS$
\begin{equation}
\label{eq:psi_nat_transf}
\psi_X := \sum_{i,\pi} |i|^{1/2} \, [X \xra{\pi} i \xra{\pibar} X] \ .
\end{equation}
Here, $i$ in the sum ranges over $\mcI$, $\pi$ over a basis of $\mcS(X,i)$, and
$\pibar$ is the corresponding element of the dual basis of $\mcS(i,X)$ with 
respect to the composition pairing $\mcS(i,X) \otimes_\opk \mcS(X,i) \ra \mcS(i,i) \cong \opk$.
Now define $\opA^\mcS = (A, T, \a, \abar, \psi, \phi)$ with:
\begin{align}
& A = \bigoplus_{i\in\mcI} \mcS(i,i) \cong \bigoplus_{i\in\mcI} \opk, \qquad
T = \bigoplus_{l,i,j\in\mcI} \mcS(l,ij),\nonumber \\ 
&\begin{array}{rll}
\a: \underbrace{\bigoplus_{l,a,i,j,k} \mcS(l,ia) \otimes_\opk \mcS(a,jk)}_{\cong \bigoplus_{l,i,j,k} \mcS(l,~i(jk))} &\longrightarrow
&\underbrace{\bigoplus_{l,b,i,j,k} \mcS(l,bk) \otimes_\opk \mcS(b,ij)}_{\cong \bigoplus_{l,i,j,k} \mcS(l,~(ij)k)},\\
\left[ l \xra{f} i(jk)\right] &\longmapsto & \left[l \xra{f} i(jk) \xra{a^{-1}_{i,j,k}} (ij)k \xra{\psi_{ij}^{-2} \otimes \id_k} (ij)k \right],
\end{array}
\nonumber \\
&\begin{array}{rll}
\abar: \underbrace{\bigoplus_{l,b,i,j,k} \mcS(l,bk) \otimes_\opk \mcS(b,ij)}_{\cong \bigoplus_{l,i,j,k} \mcS(l,~(ij)k)} &\longrightarrow
&\underbrace{\bigoplus_{l,a,i,j,k} \mcS(l,ia) \otimes_\opk \mcS(a,jk)}_{\cong \bigoplus_{l,i,j,k} \mcS(l,~i(jk))},\\
\left[ l \xra{g} (ij)k \right] &\longmapsto & \left[l \xra{g} (ij)k \xra{a_{i,j,k}} i(jk) \xra{\id_i \otimes \psi_{jk}^{-2}} i(jk) \right],
\end{array}
\nonumber
\\
&\psi: \left[i \xra{f} i\right] \,\longmapsto\, \left[i \xra{f} i \xra{\psi_i} i\right]
~~, \quad
\phi = \frac{1}{\Dim \mcS} = \left( \sum_{i\in\mcI} |i|^2 \right)^{-1}
\ .
\label{eq:sph_cat_orb_data}
\end{align}
Here, we abuse notation by denoting the morphism $\psi:A\ra A$ in the orbifold datum and the natural transformation $\psi:\Id_\mcS \Ra \id_\mcS$ from \eqref{eq:psi_nat_transf} with the same symbol.
The left action of $[f:k\ra k] \in A$ on $[m:l\ra ij]\in T$, $i,j,k,l \in \mcI$ is precomposition and the first (resp.\ second) right action is postcomposition with $(f \otimes \id_j)$ (resp.\ $(\id_i \otimes f)$) (if the composition is undefined, the corresponding action is by $0$).
The isomorphisms in the definitions of $\a$, $\abar$ given by composition.
For example, in the source object of $\alpha$, the explicit form of the isomorphism is $f \otimes_\opk g \mapsto (\id_i \otimes g) \circ f$.

Our goal is to prove the following
\begin{thm}
\label{thm:D_equiv_centre}
Let $\mcS$ be a spherical fusion category, $\opA^\mcS$ the orbifold datum as in \eqref{eq:sph_cat_orb_data} and $\mcC = \Vect_\opk$.
Then $\opA^\mcS$ is simple and $\mcC_{\opA^\mcS} \cong \mcZ(\mcS)$ as $\opk$-linear ribbon categories.
\end{thm}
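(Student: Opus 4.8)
The plan is to construct a $\opk$-linear ribbon functor $G:\mcZ(\mcS)\ra\mcC_{\opA^\mcS}$ and show it is an equivalence, proceeding in close analogy with the proof of Theorem~\ref{thm:CA_equiv_CAloc} but with substantially more bookkeeping because $\mcS$ is not assumed strict and because $A=\bigoplus_{i\in\mcI}\opk$ is genuinely non-simple as a bimodule over itself. First I would unpack what an $A$-$A$-bimodule in $\Vect_\opk$ is: since $A\cong\bigoplus_{i\in\mcI}\opk$ is commutative with a basis of orthogonal idempotents $e_i$, a bimodule $M$ is just an $\mcI\times\mcI$-graded vector space $M=\bigoplus_{i,j}M_{ij}$. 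I would then read off what extra data a $T$-crossing $\tau_1,\tau_2$ on $M$ amounts to: because $T=\bigoplus_{l,i,j}\mcS(l,ij)$ and the partial tensor products over $A$ only pick out matching gradings, the maps $\tau_1,\tau_2$ become, for each $X\in\mcS$ and each homogeneous component, a family of linear isomorphisms between $\Hom$-spaces built from $M$ and $\mcS$, and the intertwining conditions force $M$ to be (the Hom-space incarnation of) a single object $X_M\in\mcS$ with $M_{ij}=\mcS(j,\,i\otimes X_M)$ or similar. The $T$-crossings should then encode precisely a half-braiding $\g:X_M\otimes-\Ra-\otimes X_M$.

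Concretely, I expect the functor $G$ to send $(X,\g)\in\mcZ(\mcS)$ to the bimodule $M(X):=\bigoplus_{i,j\in\mcI}\mcS(j,i\otimes X)$ (with $A$ acting by pre/post-composition on the $i$ and $j$ indices), and the $T$-crossing $\tau_1$ (resp.\ $\tau_2$) to be built out of the associator of $\mcS$ together with $\g$ and the normalisation $\psi$ from \eqref{eq:psi_nat_transf} — mirroring the way $\a,\abar$ in \eqref{eq:sph_cat_orb_data} are themselves built from $a$ and $\psi$. On morphisms $G$ is the obvious map $\mcS(X,Y)\ra\bigoplus_{i,j}\Hom(\mcS(j,iX),\mcS(j,iY))$ by post-composition. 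The key steps are then: (1) check that $G(X,\g)$ satisfies the object axioms \eqrefT{1}--\eqrefT{7}, tracking the $\psi$- and $\phi$-insertions carefully — this is where the hexagon identity for $\g$ and the pentagon/triangle identities for $\mcS$ get used, and where the powers of $\psi_{ij}$ appearing in $\a,\abar$ conspire with those appearing in the definition of the $\tau_i$; (2) verify $G$ is full and faithful, which by the description of morphisms in both categories reduces to the statement that a bimodule map $M(X)\ra M(Y)$ commuting with the $\tau_i$'s is exactly a morphism $(X,\g)\ra(Y,\d)$ in $\mcZ(\mcS)$ — in particular $\End_{\mcC_{\opA^\mcS}}(\opid)\cong\End_{\mcZ(\mcS)}(\opid_{\mcZ(\mcS)})\cong\opk$, which gives simplicity of $\opA^\mcS$; (3) show $G$ is essentially surjective, i.e.\ given an arbitrary $(M,\tau_1,\tau_2,\overline{\tau_1},\overline{\tau_2})\in\mcC_{\opA^\mcS}$, use \eqrefT{1}, \eqrefT{3} together with the intertwining properties to reconstruct an object $X\in\mcS$ and a half-braiding $\g$ with $G(X,\g)\cong M$ — the analogue of the ``$\widehat{\tau_i}=\id$'' argument at the end of the proof of Theorem~\ref{thm:CA_equiv_CAloc}, now producing a genuinely new object rather than just recovering the module structure; and (4) check compatibility of $G$ with the tensor product, the braiding \eqref{eq:D_braiding} (which should reduce, after cancelling the bubble and the $\phi$, to $\g_Y$ as in \eqref{eq:ZS_br_tw}), the duality, and the twist \eqref{eq:twist}.

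The main obstacle I anticipate is step (1) together with the non-strictness of $\mcS$: keeping track of the associators $a_{X,Y,Z}$ and unitors through the definitions of $\otimes_1,\otimes_2$, $\tau_i$, and the idempotents \eqref{eq:MxAN_idemp} — and simultaneously verifying that all the grey $\psi$-insertions and the $\phi$'s match the orbifold/$T$-crossing axioms — is a long and error-prone computation; the footnote remarking that the conventions here differ from \cite{CRS3} precisely ``because this is better suited for the equivalence proof'' suggests the authors found this delicate. A secondary subtlety is proving essential surjectivity cleanly: one must show that the $\mcI\times\mcI$-graded vector space underlying an arbitrary object of $\mcC_{\opA^\mcS}$ really is of the form $\mcS(j,iX)$ for a single $X$, which should follow from $T$ containing the ``fusion'' data and from \eqrefT{4}--\eqrefT{5} forcing $\overline{\tau_i}$ to invert $\tau_i$, but making this precise requires identifying the right object $X$ (e.g.\ as $X=M_{\opid,\opid}$ with its induced structure, using that $\opid\in\mcI$) and then exhibiting the isomorphism $M\cong M(X)$ explicitly. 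Once $G$ is established as a fully faithful, essentially surjective, tensor-compatible functor respecting braiding and twist, it is automatically a ribbon equivalence, completing the proof.
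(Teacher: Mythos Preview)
Your outline is correct and matches the paper's overall strategy, but the paper organises the argument differently in one substantive way: rather than building a single functor $G:\mcZ(\mcS)\to\mcC_{\opA^\mcS}$ and checking everything at once, it introduces an auxiliary category $\mcA(\mcS)$ whose objects are triples $(X,t,b)$ with $t:X(-\,-)\Rightarrow(X-)-$ and $b:X(-\,-)\Rightarrow-(X-)$ satisfying three pentagon-type diagrams, and then factors the equivalence as $\mcZ(\mcS)\xrightarrow{E}\mcA(\mcS)\xrightarrow{D}\mcC_{\opA^\mcS}$. The functor $D$ does exactly what you describe (Remark~\ref{rem:homs_to_morphs} converts graded linear maps between Hom-spaces into natural transformations, so that \eqrefT{1}--\eqrefT{3} become the three pentagons and conversely), while $E$ sends $(X,\gamma)$ to the pair with $t=a^{-1}$ and $b=a\circ(\gamma\otimes\id)\circ a^{-1}$. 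The payoff of this two-step factorisation is in essential surjectivity: for an arbitrary object of $\mcC_{\opA^\mcS}$ the $\tau_1$-crossing need not literally be the associator, so one first extracts $(X,t,b)\in\mcA(\mcS)$ and then, inside $\mcA(\mcS)$, produces an isomorphism $\eta_U:=\widehat{t_{\opid,U}}$ to an object in the image of $E$. This is the precise analogue of your ``$\widehat{\tau_i}=\id$'' step, but it does not reduce to an idempotent argument; the normalising isomorphism $\eta$ is genuinely nontrivial, and isolating it in $\mcA(\mcS)$ keeps it from getting entangled with the $\psi$-bookkeeping.

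Two small corrections to your sketch. First, the paper's bimodule convention is $M_{lk}=\mcS(l,Xk)$ (the object $X$ sits on the left of the \emph{target}), not $\mcS(j,iX)$; either works, but the chosen one makes the $\sigma$-isomorphisms in \eqref{eq:sigma_i-def} line up with the definition of $\alpha,\bar\alpha$ in \eqref{eq:sph_cat_orb_data}. Second, your proposed reconstruction $X=M_{\opid,\opid}$ is only a vector space; the paper instead specialises $\tau_1$ at $i=\opid$ to get $M_{lj}\cong\bigoplus_b\mcS(l,bj)\otimes_\opk M_{b\opid}$ and then sets $X:=\bigoplus_{b\in\mcI} b\otimes M_{b\opid}\in\mcS$, which is the correct object-level incarnation. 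After the linear equivalence is in place, the monoidal structure, braiding, and twist are checked for the composite $F=D\circ E$ exactly along the lines you anticipate in step~(4).
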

Together with Theorem~\ref{thm:CA_is_modular} this gives an alternative proof that $\mcZ(\mcS)$ is modular. Furthermore, 
for the orbifold datum \eqref{eq:sph_cat_orb_data} one has $\tr_\mcC \psi^4 = \Dim\mcS$, so the second part of Theorem~\ref{thm:CA_is_modular} yields
\begin{equation}
\Dim \mcZ(\mcS) = (\Dim \mcS)^2 \ .
\end{equation}
Modularity and the dimension of $\mcZ(\mcS)$ are of course already known from \cite{Mu}.

\medskip

The proof of Theorem \ref{thm:D_equiv_centre} is somewhat lengthy and technical and is organised as follows: 
In Section \ref{subsubsec:aux_cat_AS} we define an auxiliary category $\mcA(\mcS)$ which is proved to be equivalent to the centre $\mcZ(\mcS)$ as a linear category.
Then in Sections \ref{subsubsec:functorD} and \ref{subsubsec:taus-satisfy-Tcond} we show that $\mcC_{\opA^\mcS} \cong \mcA(\mcS)$ as linear categories, and that the orbifold datum $\opA^\mcS$ is simple. Composing the two equivalences gives a linear equivalence $F:\mcZ(\mcS) \ra \mcC_{\opA^\mcS}$.
In Section \ref{subsubsec:F-ribb} we equip $F$ with a monoidal structure  and show that it preserves braidings and twists.
\subsubsection{Auxiliary category $\boldsymbol{\mcA(\mcS)}$ and equivalence to $\boldsymbol{\mcZ(\mcS)}$}
\label{subsubsec:aux_cat_AS}
\begin{defn}
Define the category $\mcA(\mcS)$ to have
\begin{itemize}[wide, labelwidth=!, labelindent=0pt]
\item \it{objects}: triples $(X, t^X, b^X)$, where $X \in \mcS$ and
$t^X:X \otimes (- \otimes -) \Ra (X \otimes -) \otimes -$,
$b^X:X \otimes (- \otimes -) \Ra - \otimes (X \otimes -)$
are natural transformations between endofunctors of $\mcS\times\mcS$, such that the following diagrams commute for all $U,V,W\in\mcS$:
\begin{equation}
\label{eq:AS_defn_diag1}
\begin{tikzpicture}[baseline={([yshift=-.5ex]current bounding box.center)}]
\node (P1) at (-5,0) {$X(U(VW))$};
\node (P2) at (-3,2) {$(XU)(VW)$};
\node (P3) at (-1,0) {$((XU)V)W$};
\node (P4) at (-4.8,-2) {$X((UV)W)$};
\node (P5) at (-1.2,-2) {$(X(UV))W$};

\path[commutative diagrams/.cd,every arrow,every label]
(P1) edge node {$t^X_{U,VW}$} (P2)
(P2) edge node {$a^{-1}_{XU,V,W}$} (P3)
(P1) edge node[swap] {$\id_X \otimes a^{-1}_{U,V,W}$} (P4)
(P4) edge node[swap] {$t^X_{UV,W}$} (P5)
(P5) edge node[swap] {$t^X_{U,V} \otimes \id_W$} (P3);
\end{tikzpicture},
\end{equation}
\begin{equation}
\label{eq:AS_defn_diags23}
\begin{tikzpicture}[baseline={([yshift=-.5ex]current bounding box.center)}]
\node (P1) at (-5,0) {$X(U(VW))$};
\node (P2) at (-4.8,2) {$U(X(VW))$};
\node (P3) at (-1.2,2) {$U((XV)W)$};
\node (P4) at (-1,0) {$(U(XV))W$};
\node (P5) at (-4.8,-2) {$X((UV)W)$};
\node (P6) at (-1.2,-2) {$(X(UV))W$};

\path[commutative diagrams/.cd,every arrow,every label]
(P1) edge node {$b^X_{U,VW}$} (P2)
(P2) edge node[above=4pt] {$\id_U \otimes t^X_{V,W}$} (P3)
(P3) edge node {$a^{-1}_{U,XV,W}$} (P4)
(P1) edge node[swap] {$\id_X \otimes a^{-1}_{U,V,W}$} (P5)
(P5) edge node[swap] {$t^X_{UV,W}$} (P6)
(P6) edge node[swap] {$b^X_{U,V}\otimes \id_W$} (P4);
\end{tikzpicture}
\quad  , \quad
\begin{tikzpicture}[baseline={([yshift=-.5ex]current bounding box.center)}]
\node (P1) at (-5,0) {$X(U(VW))$};
\node (P2) at (-4.8,2) {$U(X(VW))$};
\node (P3) at (-1.2,2) {$U(V(XW))$};
\node (P4) at (-1,0) {$(UV)(XW)$};
\node (P5) at (-3,-2) {$X((UV)W)$};

\path[commutative diagrams/.cd,every arrow,every label]
(P1) edge node {$b^X_{U,VW}$} (P2)
(P2) edge node[above=4pt] {$\id_U\otimes b^X_{V,W}$} (P3)
(P3) edge node {$a^{-1}_{U,V,XW}$} (P4)
(P1) edge node[swap] {$\id_X \otimes a^{-1}_{U,V,W}$} (P5)
(P5) edge node[swap] {$b^X_{UV,W}$}(P4);
\end{tikzpicture};
\end{equation}
\item
\it{morphisms:}
$\varphi:(X,t^X,b^X) \ra (Y,t^Y,b^Y)$ is a natural transformation\\ $\varphi:X \otimes - \Ra Y \otimes -$, such that the following diagrams commute for all $U,V\in\mcS$
\begin{equation}
\label{eq:AS_defn_morphs}
\begin{tikzpicture}[baseline={([yshift=-.5ex]current bounding box.center)}]
\node (P1) at (-3,0) {$X(UV)$};
\node (P2) at (-1.5,1) {$Y(UV)$};
\node (P3) at (0,0) {$(YU)V$};
\node (P4) at (-1.5,-1) {$(XU)V$};

\path[commutative diagrams/.cd,every arrow,every label]
(P1) edge node {$\varphi_{UV}$} (P2)
(P2) edge node {$t^Y_{U,V}$} (P3)
(P1) edge node[swap] {$t^X_{U,V}$} (P4)
(P4) edge node[swap] {$\varphi_{U} \otimes \id_V$} (P3);
\end{tikzpicture}, \quad
\begin{tikzpicture}[baseline={([yshift=-.5ex]current bounding box.center)}]
\node (P1) at (-3,0) {$X(UV)$};
\node (P2) at (-1.5,1) {$Y(UV)$};
\node (P3) at (0,0) {$U(YV)$};
\node (P4) at (-1.5,-1) {$U(XV)$};

\path[commutative diagrams/.cd,every arrow,every label]
(P1) edge node {$\varphi_{UV}$} (P2)
(P2) edge node {$b^Y_{U,V}$} (P3)
(P1) edge node[swap] {$b^X_{U,V}$} (P4)
(P4) edge node[swap] {$\id_U \otimes \varphi_V$} (P3);
\end{tikzpicture}.
\end{equation}
\end{itemize}
\end{defn}

\begin{prp}
\label{prp:ZS_equiv_AS}
The functor $E:\mcZ(\mcS) \ra \mcA(\mcS)$, acting
\begin{itemize}
\item
\it{on objects:}
$E(X,\g) := (X, t^X , b^X)$, where for all $U,V\in\mcS$
\begin{align}
& t^X_{U,V} := \left[ X(UV) \xra{a^{-1}_{X,U,V}} (XU)V \right] \ ,
\nonumber\\
& b^X_{U,V} := \left[ X(UV) \xra{a^{-1}_{X,U,V}} (XU)V \xra{\g_U \otimes \id_V} (UX)V \xra{a_{U,X,V}} U(XV) \right] \ ;
\label{eq:Functor-E-on-obj}
\end{align}
\item
\it{on morphisms:}
$E\big( ~ [(X,\g) \xra{f} (Y,\d)] ~ \big) :=
\{X\otimes U \xra{f \otimes \id_U} Y \otimes U\}_{U\in\mcS}$.
\end{itemize}
is a linear equivalence.
\end{prp}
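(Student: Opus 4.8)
The plan is to exhibit an explicit inverse functor to $E$ and check that the two composites are the identity; this is usually the cleanest way to prove a linear equivalence of this kind, since both categories have ``the same'' underlying objects of $\mcS$ and the content lies entirely in the structure morphisms. So first I would construct a candidate quasi-inverse $E' : \mcA(\mcS) \ra \mcZ(\mcS)$. Given $(X, t^X, b^X)$, the natural transformation $\g : X \otimes - \Ra - \otimes X$ should be recovered by evaluating $b^X$ with the tensor unit inserted in one of the slots and using the unitors to cancel the spurious factor: concretely, set
\begin{equation*}
\g_U := \big[ X \otimes U \xra{\id_X \otimes l_U^{-1}} X(\opid U) \xra{b^X_{\opid, U}} \opid(XU) \xra{l_{XU}} XU \xra{\text{(reassociate)}} \dots \big]
\end{equation*}
— more precisely one evaluates $b^X$ at $(U, \opid)$ or $(\opid, U)$ as dictated by the source/target, and strips unitors. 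One then has to verify that this $\g$ is a half-braiding, i.e.\ satisfies the hexagon; this should follow from diagram~\eqref{eq:AS_defn_diags23} (the two $b^X$-compatibility squares) specialised by plugging in $\opid$ for appropriate objects and using naturality and the triangle/pentagon axioms of $\mcS$. On morphisms, $E'$ takes a natural transformation $\varphi : X \otimes - \Ra Y \otimes -$ to its component $\varphi_\opid : X\opid \ra Y\opid$ composed with unitors, i.e.\ the underlying morphism $X \ra Y$; compatibility with half-braidings follows from the right-hand square in~\eqref{eq:AS_defn_morphs} evaluated at $\opid$.

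The bulk of the work is then the two verifications $E' \circ E \cong \Id$ and $E \circ E' \cong \Id$. The composite $E' \circ E$ is easy: starting from $(X, \g)$, the formula for $b^X$ in~\eqref{eq:Functor-E-on-obj} evaluated at $(\opid, U)$ or $(U,\opid)$ reduces, via the triangle axiom, to $\g_U$ up to unitors, so $E' \circ E = \Id_{\mcZ(\mcS)}$ essentially on the nose (or with a canonical unitor-built natural isomorphism if $\mcS$ is non-strict). The other direction $E \circ E' \cong \Id_{\mcA(\mcS)}$ is where the real content sits: given $(X, t^X, b^X)$, applying $E'$ then $E$ produces new structure transformations $\tilde t^X, \tilde b^X$ built purely from associators and the recovered $\g$, and one must show $\tilde t^X_{U,V} = t^X_{U,V}$ and $\tilde b^X_{U,V} = b^X_{U,V}$ for all $U, V$. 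For $\tilde t^X$ this is forced: $\tilde t^X_{U,V} = a^{-1}_{X,U,V}$, and one shows $t^X_{U,V} = a^{-1}_{X,U,V}$ by using diagram~\eqref{eq:AS_defn_diag1} with $V$ or $W$ set to $\opid$, together with naturality of $t^X$ and the triangle axiom — i.e.\ the first axiom rigidifies $t^X$ to be nothing but reassociation. For $\tilde b^X$ one then uses diagrams~\eqref{eq:AS_defn_diags23} with an object set to $\opid$ to pin down $b^X_{U,V}$ in terms of $b^X_{\opid, -}$ (hence in terms of $\g$) and associators, matching the formula defining $E$.

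The main obstacle, as usual with centre-type arguments, is bookkeeping the associators and unitors in the non-strict setting: every ``evaluate at $\opid$ and cancel unitors'' step requires an invocation of the triangle axiom or of naturality, and the three hexagon-like diagrams~\eqref{eq:AS_defn_diag1}--\eqref{eq:AS_defn_diags23} must be specialised carefully so that the degenerate instances genuinely deliver the rigidity statements ``$t^X$ is $a^{-1}$'' and ``$b^X$ is determined by $\g$''. A cleaner alternative, if one prefers to avoid this, is to invoke the coherence theorem to assume $\mcS$ strict (as is done elsewhere in the paper for $\mcC$), in which case $l, r, a$ all disappear, $\tilde t^X = \id$, and the computations above collapse to a few lines; then $E$ and $E'$ are mutually inverse on the nose and full faithfulness plus essential surjectivity are immediate. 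Either way, once the two composites are identified with the identity (up to canonical iso), $E$ is an equivalence, and linearity is clear since all the assignments are $\opk$-linear on hom-spaces.
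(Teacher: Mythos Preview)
Your overall strategy --- recover a half-braiding from $b^X$ by plugging in the unit, then check the two composites --- is close in spirit to the paper's proof, but there is a genuine gap in the key step.

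The claim ``the first axiom rigidifies $t^X$ to be nothing but reassociation'' is false. Specialising diagram~\eqref{eq:AS_defn_diag1} by setting various objects to $\opid$ does \emph{not} force $t^X_{U,V}=a^{-1}_{X,U,V}$. What it yields (as the paper works out) is
\[
t^X_{U,V} \;=\; (\eta_U^{-1}\otimes\id_V)\circ a^{-1}_{X,U,V}\circ\eta_{UV},
\qquad \eta_U:=\widehat{t^X_{\opid,U}}\,,
\]
and while one can show $\widehat{t^X_{U,\opid}}=\id$, the endomorphism $\eta_U=\widehat{t^X_{\opid,U}}$ is \emph{not} forced to be the identity --- even if $\mcS$ is strict. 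So $E\circ E'$ does not equal $\Id_{\mcA(\mcS)}$ on the nose; $(X,t^X,b^X)$ is only \emph{isomorphic} to $E(X,\g)$, via $\eta$. Your ``cleaner alternative'' of passing to a strict $\mcS$ therefore does not collapse the argument either.

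A second, related problem: your displayed formula extracts $\g$ from $b^X_{\opid,U}$, but after stripping unitors this is a map $XU\to XU$, not $XU\to UX$, and in fact one can show $\widehat{b^X_{\opid,U}}=\id$. The half-braiding lives in the other slot, $\widehat{b^X_{U,\opid}}:XU\to UX$, and moreover must be corrected by $\eta_U^{-1}$ to actually satisfy the hexagon. The paper sets $\g_U:=\widehat{b^X_{U,\opid}}\circ\eta_U^{-1}$ and then checks that $\eta$ is an isomorphism $(X,t^X,b^X)\to E(X,\g)$ in $\mcA(\mcS)$, which gives essential surjectivity; full faithfulness is handled separately. Your outline becomes correct once you replace ``$E\circ E'=\Id$'' by ``$\eta$ furnishes a natural isomorphism $\Id\Rightarrow E\circ E'$'', which is essentially the paper's argument.
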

\begin{proof}
It is easy to see that $E(X,\gamma)$ is indeed an object in $\mcA(\mcS)$ and that $E(f)$ is a morphism in $\mcA(\mcS)$.
In the remainder of the proof we show that $E$ is essentially surjective and fully faithful.

As a preparation, given an object $(X,t,b)\in \mcA(\mcS) \in \mcA(\mcS)$ we derive some properties of $t$ and $b$. For $V,W\in\mcS$, consider the following diagram, whose ingredients we proceed to explain:
\begin{equation}
\label{eq:E-proof_pentagon_diag}
\begin{tikzpicture}[baseline={([yshift=-.5ex]current bounding box.center)}]
\node (P1) at (-6,0) {$X(\opid(VW))$};
\node (P2) at (-3,1) {$(X\opid)(VW)$};
\node (P3) at (0,0) {$((X\opid)V)W$};
\node (P4) at (-4.5,-1) {$X((\opid V)W)$};
\node (P5) at (-1.5,-1) {$(X(\opid V))W$};

\node (A1) at (-8.5,0) {$X(VW)$};
\node (A2) at (-3,2.5) {$X(VW)$};
\node (A3) at (2.5,0) {$(XV)W$};
\node (A4) at (-4.5,-2.5) {$X(VW)$};
\node (A5) at (-1.5,-2.5) {$(XV)W$};

\path[commutative diagrams/.cd,every arrow,every label]
(P1) edge node {$t_{\opid,VW}$} (P2)
(P2) edge node {$a^{-1}_{X\opid,V,W}$} (P3)
(P1) edge node[swap,pos=0.0] {$\id_X \otimes a^{-1}_{\opid,V,W}$} (P4)
(P4) edge node[swap] {$t_{\opid V,W}$} (P5)
(P5) edge node[swap,pos=1.0] {$t_{\opid, V} \otimes \id_W$} (P3);

\path[commutative diagrams/.cd,every arrow,every label]
(A1) edge node {$\widehat{t_{\opid,VW}}$} (A2)
(A2) edge node {$a^{-1}_{X,V,W}$} (A3)
(A1) edge node[swap] {$\id$} (A4)
(A4) edge node[swap] {$t_{V,W}$} (A5)
(A5) edge node[swap] {$\widehat{t_{\opid, V}}\otimes \id_W$} (A3);

\path[commutative diagrams/.cd,every label]
(A1) edge node {$\sim$} (P1)
(A2) edge node[rotate=90,above=-10pt] {$\sim$} (P2)
(A3) edge node[swap] {$\sim$} (P3)
(A4) edge node[rotate=90,above=-10pt] {$\sim$}(P4)
(A5) edge node[rotate=90,above=-10pt] {$\sim$} (P5);
\end{tikzpicture}.
\end{equation}
We abbreviate $t^X$ by $t$, and we use the following notation for all $U\in\mcS$:
\begin{align}\nonumber
& \widehat{t_{\opid, U}} := \left[ XU \xra{\id_X\otimes l^{-1}_U} X(\opid U) \xra{t_{\opid, U}} (X\opid)U \xra{r_X \otimes \id_U} XU \right],\\
& \widehat{t_{U,\opid}}  := \left[ XU \xra{\id_X \otimes r_U^{-1}} X(U\opid) \xra{t_{U,\opid}} (XU)\opid \xra{r_{XU}} XU \right],\\ \nonumber
& \widehat{b_{\opid,U}}  := \left[ XU \xra{\id_X\otimes l^{-1}_U} X(\opid U) \xra{b_{\opid,U}} \opid (XU) \xra{l_{XU}} XU \right],\\ \nonumber
& \widehat{b_{U,\opid}}  := \left[ XU \xra{\id_X \otimes r_U^{-1}} X(U\opid) \xra{b_{U,\opid}} U(X\opid) \xra{\id_U\otimes r_X} UX \right].
\end{align}
This notation will be used in the remainder of this section, too.

By taking $U=\opid$ in \eqref{eq:AS_defn_diag1}, the inner pentagon	in \eqref{eq:E-proof_pentagon_diag} 
commutes and all squares commute by definition of $\widehat{t_{\opid,U}}$, by naturality or by monoidal coherence, and hence the outer pentagon commutes as well.
Leaving out the identity edge, we get the following commutative diagram for all $V,W\in\mcS$:
\begin{equation}
\label{eq:E-proof_diag1}
\begin{tikzpicture}[baseline={([yshift=-.5ex]current bounding box.center)}]
\node (P1) at (-3,0) {$X(VW)$};
\node (P2) at (-1.5,1) {$X(VW)$};
\node (P3) at (0,0) {$(XV)W$};
\node (P4) at (-1.5,-1) {$(XV)W$};

\path[commutative diagrams/.cd,every arrow,every label]
(P1) edge node {$\widehat{t_{\opid,VW}}$} (P2)
(P2) edge node {$a^{-1}_{X,V,W}$} (P3)
(P1) edge node[swap] {$t_{V,W}$} (P4)
(P4) edge node[swap] {$\widehat{t_{\opid, V}}\otimes \id_W$} (P3);
\end{tikzpicture}.
\end{equation}
Similarly, by taking $V=\opid$ and $W=\opid$ in \eqref{eq:AS_defn_diag1} one in the end gets the following two commuting diagrams:
\begin{equation}
\label{eq:E-proof_aux_diags}
\forall U,W\in\mcS:
\begin{tikzpicture}[baseline={([yshift=-.5ex]current bounding box.center)}]
\node (P1) at (0,0)  {$X(UW)$};
\node (P2) at (2.5,0)  {$(XU)W$};
\node (P3) at (0,-2) {$(XU)W$};

\path[commutative diagrams/.cd,every arrow,every label]
(P1) edge node {$t_{U,W}$} (P2)
(P1) edge node[swap] {$t_{U,W}$} (P3)
(P2) edge node {$\widehat{t_{U,\opid}}\otimes\id_W$} (P3);
\end{tikzpicture},
\forall U,V\in\mcS:
\begin{tikzpicture}[baseline={([yshift=-.5ex]current bounding box.center)}]
\node (P1) at (0,0)  {$X(UV)$};
\node (P2) at (2.5,0)  {$(XU)V$};
\node (P3) at (2.5,-2) {$X(UV)$};

\path[commutative diagrams/.cd,every arrow,every label]
(P1) edge node {$t_{U,V}$} (P2)
(P1) edge node[swap] {$\widehat{t_{UV,\opid}}$} (P3)
(P3) edge node {$t_{U,V}$} (P2);
\end{tikzpicture}.
\end{equation}
These diagrams imply that for all $U\in\mcS$ one has $\widehat{t_{U,1}} = \id_U$.

Repeating the above procedure of setting individual objects to $\opid$ also for two diagrams in \eqref{eq:AS_defn_diags23} yields three more conditions.
Namely, for all $U,V,W\in\mcS$ one has $\widehat{b_{\opid, U}} = \id_U$ and the following diagrams commute:
\begin{equation}
\label{eq:E-proof_diags23}
\begin{tikzpicture}[baseline={([yshift=-.5ex]current bounding box.center)}]
\node (P1) at (-6,0) {$X(UW)$};
\node (P2) at (-4.5,1) {$U(XW)$};
\node (P3) at (-1.5,1) {$U(XW)$};
\node (P4) at (0,0) {$(UX)W$};
\node (P5) at (-3,-1) {$(XU)W$};

\path[commutative diagrams/.cd,every arrow,every label]
(P1) edge node[pos=0.2] {$b_{U,W}$} (P2)
(P2) edge node[above=3pt] {$\id_U \otimes \widehat{t_{\opid, W}}$} (P3)
(P3) edge node[pos=0.7] {$a^{-1}_{U,X,W}$} (P4)
(P1) edge node[swap] {$t_{U,W}$} (P5)
(P5) edge node[swap] {$\widehat{b_{U, \opid}}\otimes\id_W$}(P4);
\end{tikzpicture},
\begin{tikzpicture}[baseline={([yshift=-.5ex]current bounding box.center)}]
\node (P1) at (-2.5,1.5) {$X(UV)$};
\node (P2) at (0,1.5) {$U(XV)$};
\node (P3) at (0,0) {$U(VX)$};
\node (P4) at (-2.5,0) {$(UV)X$};

\path[commutative diagrams/.cd,every arrow,every label]
(P1) edge node {$b_{U,V}$} (P2)
(P2) edge node {$\id_U\otimes \widehat{b_{V,\opid}}$} (P3)
(P3) edge node {$a^{-1}_{U,V,X}$} (P4)
(P1) edge node {$\widehat{b_{UV,\opid}}$} (P4);
\end{tikzpicture}.
\end{equation}

Let $\eta_U := [XU \xra{\widehat{t_{\opid, U}}} XU]$ and $\g_U := [XU \xra{\eta_U^{-1}} XU \xra{\widehat{b_{U,\opid}}} UX]$ for all $U\in\mcS.$
We claim that $\g$ is a half-braiding for $X$ and that $\eta$ is an isomorphism $(X,t,b) \ra E(X,\g)$ in $A(\mcS)$.

We start by showing that $\eta$ is indeed a morphism in $A(\mcS)$.
First note that \eqref{eq:E-proof_diag1} now reads
\begin{equation}
\label{eq:t_UV-and-a^-1}
t_{U,V} = \left[X(UV) \xra{\eta_{UV}} X(UV) \xra{a^{-1}_{X,U,V}} (XU)V \xra{\eta^{-1}_U \otimes \id_V} (XU)V\right] \ .
\end{equation}
Together with the definition of $E(X,\gamma)$ in \eqref{eq:Functor-E-on-obj} we see that this is precisely the first condition in \eqref{eq:AS_defn_morphs}. 
Plugging \eqref{eq:t_UV-and-a^-1} into the first diagram in \eqref{eq:E-proof_diags23} results in
\begin{equation}
\label{eq:b_UV-and-gamma}
b_{U,V} = \left[
\begin{array}{rl}
X(UV) &\xra{\eta_{UV}} X(UV) \xra{a^{-1}_{X,U,V}} (XU)V \xra{\g_U\otimes\id_V} (UX)V\\
&\xra{a_{U,X,V}} U(XV) \xra{\id_U\otimes\eta^{-1}_V} U(XV)
\end{array}
\right] \ .
\end{equation}
This is precisely the second condition in \eqref{eq:AS_defn_morphs}.

Checking that $\g$ satisfies the hexagon condition is now a direct consequence of plugging \eqref{eq:b_UV-and-gamma} into the second diagram in \eqref{eq:E-proof_diags23}.

Altogether, this shows that $E$ is essentially surjective. 

To get that $E$ is fully faithful, let $\varphi : E(X,\g)\ra E(Y,\d)$ be a morphism in $A(\mcS)$. Setting $U = \opid$ in the first condition in \eqref{eq:AS_defn_morphs} yields that for all $V\in\mcS$ one has $\varphi_V = \widehat{\varphi_\opid} \otimes \id_V$, where
\begin{equation}
\widehat{\varphi_\opid} := \left[ X \xra{r_X^{-1}} X\opid \xra{\varphi_\opid} Y\opid \xra{r_Y} Y \right] \ .
\end{equation}
Setting $V = \opid$ in the second condition in \eqref{eq:AS_defn_morphs} 
shows that $\widehat{\varphi_\opid}$ commutes with the half-braidings $\g$ and $\d$.
Altogether, $\varphi$ is in the image of $E$.
\end{proof}
\subsubsection{The functor $\boldsymbol{D}$ from $\boldsymbol{\mcA(\mcS)}$ to $\boldsymbol{\mcC_{\opA^\mcS}}$}
\label{subsubsec:functorD}

In this subsection we define a functor $D : \mcA(\mcS) \to \mcC_{\opA^\mcS}$. We start by defining $D$ on objects. Let $(X,t,b) \in \mcA(\mcS)$ and denote the components of $D(X,t,b)$ by
\begin{equation}\label{eq:Ddef-value-on-obj}
D(X,t,b) =: (M, \tau_1, \tau_2, \overline{\tau_1}, \overline{\tau_2}) \ .
\end{equation}
We will go through the definition of the constituents step by step, starting with the $A$-$A$-bimodule $M$.

\medskip

For $n\ge 1$, an $A^{\otimes n}$-module is an $\mcI^{\times n}$-graded vector space and a morphism between modules is a grade-preserving linear map.
In particular, an $A$-$A$-bimodule $M$ is a vector space with a decomposition $M = \bigoplus_{i,j\in\mcI} M_{ij}$, where for $M_{ij}$ only the $\mcS(i,i)$-$\mcS(j,j)$ action is non-trivial.
For the bimodule $M$ in \eqref{eq:Ddef-value-on-obj} we set
\begin{equation}
\label{eq:Mlk_is_SlXk}
M = \bigoplus_{i,j\in\mcI} M_{ij}
\quad \text{with} \quad
M_{ij} = \mcS(i, Xj) \ ,
\end{equation}
with action of $\mcS(i,i)$ (from the left) and $\mcS(j,j)$ (from the right) given by pre- and post-composition, respectively.

Next we turn to defining $\tau_i$ and $\overline{\tau_i}$. We will need two ingredients. The first are certain $A$-$AA$-bimodule isomorphisms $\sigma_x$, $x=0,1,2$, which are defined as
\begin{align}
M \otimes_0 T
=
\bigoplus_{l,i,j,a\in\mcI} \mcS(l, Xa) \otimes_{\opk} \mcS(a, ij)
&~\xrightarrow{~\sigma_0~}~
\bigoplus_{l,i,j\in\mcI} \mcS(l, X(ij))
\nonumber \\
[l \xra{f} Xa] \otimes_\opk [a \xra{g} ij] 
&~\longmapsto~
[l \xra{(\id_X \otimes g) \circ f} X(ij)] \ ,
\nonumber \\[1em]
T \otimes_1 M
=
\bigoplus_{l,i,j,a\in\mcI} \mcS(l, aj) \otimes_{\opk} \mcS(a, Xi)
&~\xrightarrow{~\sigma_1~}~
\bigoplus_{l,i,j\in\mcI} \mcS(l, (Xi)j)
\nonumber \\
[l \xra{f} aj] \otimes_\opk [a \xra{g} Xi] 
&~\longmapsto~
[l \xra{(g \otimes \id_j) \circ f} (Xi)j)] \ ,
\nonumber \\[1em]
T \otimes_2 M
=
\bigoplus_{l,i,j,a\in\mcI} \mcS(l, ia) \otimes_{\opk} \mcS(a, Xj)
&~\xrightarrow{~\sigma_2~}~
\bigoplus_{l,i,j\in\mcI} \mcS(l, i(Xj))
\nonumber \\
[l \xra{f} ia] \otimes_\opk [a \xra{g} Xj] 
&~\longmapsto~
[l \xra{(\id_i \otimes g) \circ f} i(Xj))] \ .
\label{eq:sigma_i-def}
\end{align}

To describe the second ingredient, it will be useful to relate linear maps between morphism spaces in $\mcS$ to actual morphisms in $\mcS$ as described in the following remark.

\begin{rem}
	\label{rem:homs_to_morphs}
Let $\euT^n_B:\mcS \times \cdots \times \mcS \ra \mcS$ be the functor which takes the $n$-fold tensor product with a given bracketing $B$.
Consider the $\mcI^{\times (n+1)}$-graded vector space $V_B := \bigoplus_{l,i_1,\dots,i_n} \mcS(l, \euT^n_B(i_1, \dots, i_n))$.
For two bracketings $B$, $B'$, one has a linear isomorphism
\begin{equation*}
\big\{ \text{natural transformations } \euT^n_B \Ra \euT^n_{B'} \big\}
	~\xrightarrow{~\sim~}~
\big\{ \text{graded linear maps } V_B \ra V_{B'} \big\} \ ,
\end{equation*}
given by post-composition. That is, it takes a natural transformation $\varphi$ to the graded linear map
\begin{equation}
	\left[ l \xra{f} \euT^n_B(i_1, \dots, i_n) \right]
	~\longmapsto~
	\left[ l \xra{f} \euT^n_B(i_1, \dots, i_n) \xra{\varphi_{i_1,\dots,i_n}} \euT^n_{B'}(i_1, \dots, i_n) \right] \ .
\end{equation}
This is easily generalised for functors obtained from $\euT^n_B$ by fixing some of the arguments.
\end{rem}

Recall the natural transformations $t$, $b$ that form part of the object $(X,t,b)$ on which we are defining the functor $D$.
The second ingredient needed to define $\tau_i$, $\overline{\tau_i}$ are four families of morphisms $(\tau_i')_{UV}$, $(\overline{\tau_i}')_{UV}$ in $\mcS$ which are natural in $U,V \in \mcS$:
\begin{align}
\big[ X(UV) \xra{(\tau_1')_{UV}} (XU)V \big] 
&~:=~ 
\big[ X(UV) \xra{t_{UV}} (XU)V \xra{\psi^{-2}_{XU}\otimes\id_V} (XU)V \big]
\ , 
\nonumber\\ 
\big[ X(UV) \xra{(\tau_2')_{UV}} U(XV) \big] 
&~:=~ 
\big[ X(UV) \xra{b_{UV}} U(XV) \xra{\id_U\otimes \psi^{-2}_{XV}} U(XV) \big]
\ ,
\nonumber\\ 
\big[ (XU)V \xra{(\overline{\tau_1}')_{UV}} X(UV) \big]
&~:=~ 
\big[ (XU)V \xra{t^{-1}_{UV}} X(UV) \xra{\id_X \otimes \psi^{-2}_{UV}} X(UV) \big] 
\ ,
\nonumber\\ 
\big[U(XV) \xra{(\overline{\tau_2}')_{UV}} X(UV)\big]
&~:=~ 
\big[U(XV) \xra{b^{-1}_{UV}} X(UV) \xra{\id_X \otimes \psi^{-2}_{UV}} X(UV)\big] 
\ .
\label{eq:tau'-from-t-b}
\end{align}
Combining these two ingredients, we define $\tau_i$, $\overline{\tau_i}$ in \eqref{eq:Ddef-value-on-obj} to be:
\begin{align}
\tau_1 &:= \big[\, M \otimes_0 T 
\xrightarrow{\sigma_0} \mcS(l, X(ij))
\xrightarrow{(\tau_1')_{ij}\circ (-)} \mcS(l, (Xi)j)
\xrightarrow{\sigma_1^{-1}} T \otimes_1 M \big] \ ,
\nonumber \\
\tau_2 &:= \big[\, M \otimes_0 T 
\xrightarrow{\sigma_0} \mcS(l, X(ij))
\xrightarrow{(\tau_2')_{ij}\circ (-)} \mcS(l, i(Xj))
\xrightarrow{\sigma_2^{-1}} T \otimes_2 M \big] \ ,
\nonumber \\
\overline{\tau_1} &:= \big[\, T \otimes_1 M 
\xrightarrow{\sigma_1} \mcS(l, (Xi)j)
\xrightarrow{(\overline{\tau_1}')_{ij}\circ (-)} \mcS(l, X(ij))
\xrightarrow{\sigma_0^{-1}} M \otimes_0 T \big] \ ,
\nonumber \\
\overline{\tau_2} &:= \big[\, T \otimes_2 M 
\xrightarrow{\sigma_2} \mcS(l, i(Xj))
\xrightarrow{(\overline{\tau_2}')_{ij}\circ (-)} \mcS(l, X(ij))
\xrightarrow{\sigma_0^{-1}} M \otimes_0 T \big] \ .
\label{eq:tau-from-tau'}
\end{align}
The verification that these morphisms satisfy \eqrefT{1}-\eqrefT{7} will be part of the proof of Proposition~\ref{prp:AS_equiv_CAS} below.

The action of $D$ on a morphism $\varphi:(X,t^X,b^X) \ra (Y,t^Y,b^Y)$ in $\mcA(\mcS)$ is
\begin{equation}\label{eq:Ddef-morph}
D(\varphi) := \Big[
D(X,t^X,b^X) = \bigoplus_{i,j\in\mcI} \mcS(i, Xj) 
\xrightarrow{~\varphi_j \circ (-)~} \bigoplus_{i,j\in\mcI} \mcS(i, Yj) = D(Y,t^Y,b^Y) \Big] \ .
\end{equation} 

\begin{prp}
\label{prp:AS_equiv_CAS}
The functor $D:\mcA(\mcS) \ra \mcC_{\opA^\mcS}$ is well-defined and a linear equivalence.
\end{prp}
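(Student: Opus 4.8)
The plan is to establish three things: that $D$ is well-defined (i.e.\ $D(X,t,b)$ really lies in $\mcC_{\opA^\mcS}$ and $D(\varphi)$ really is a morphism there), that $D$ is fully faithful, and that $D$ is essentially surjective; simplicity of $\opA^\mcS$ will follow along the way since $D$ composed with $E$ sends the tensor unit of $\mcZ(\mcS)$ to $\opid_{\mcC_{\opA^\mcS}}$ and $\End_{\mcZ(\mcS)}(\opid)=\opk$. Throughout, the key technical device is Remark~\ref{rem:homs_to_morphs}: it lets us translate each of the conditions \eqrefT{1}--\eqrefT{7}, which a priori are identities between graded linear maps on multiple tensor factors of $A$ and $T$, into commuting diagrams of natural transformations built from $t$, $b$, the associator $a$, and the $\psi_X$'s. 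Under the isomorphisms $\sigma_x$ of \eqref{eq:sigma_i-def}, the $T$-crossings $\tau_i$, $\overline{\tau_i}$ are literally post-composition with $(\tau_i')_{UV}$, $(\overline{\tau_i}')_{UV}$, so the whole verification becomes bookkeeping of bracketings and of $\psi^{\pm 2}$ insertions.

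First I would spell out the dictionary once and for all: a morphism $M\otimes_0 T\otimes_0 T \to \cdots$ in $\mcC_{\opA^\mcS}$ unravels, via Remark~\ref{rem:homs_to_morphs}, into a natural transformation between the two triple-tensor-product functors $X((UV)W)$-type and $(X(UV))W$-type, etc. Then \eqrefT{4} and \eqrefT{5} become exactly the statements that $t$ and $b$ are isomorphisms (which they are, by the same argument as in the proof of Proposition~\ref{prp:ZS_equiv_AS}, or simply because $(\overline{\tau_i}')$ in \eqref{eq:tau'-from-t-b} are visibly two-sided inverses of $(\tau_i')$ up to the central $\psi^{-2}$-factors, which cancel against the $\omega$- and $\psi$-insertions in \eqrefT{4}--\eqrefT{5}); \eqrefT{6}, \eqrefT{7} become the naturality of $t$, $b$ together with compatibility with $\psi$; \eqrefT{1} becomes precisely the pentagon-type diagram \eqref{eq:AS_defn_diag1}, with the $\psi^{-2}_{XU}$ bookkeeping matching the $\omega_1$-insertions in (O1) via \eqref{eq:psi_nat_transf}; \eqrefT{2}, \eqrefT{3} become the two diagrams in \eqref{eq:AS_defn_diags23}. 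The upshot is that the conditions defining an object of $\mcA(\mcS)$ are in bijection with the conditions \eqrefT{1}--\eqrefT{7}, once one accounts for the $\alpha$, $\abar$ of \eqref{eq:sph_cat_orb_data}, which are themselves nothing but $a^{-1}$ resp.\ $a$ decorated with $\psi^{-2}$. Checking that $D(\varphi)$ is a morphism in $\mcC_{\opA^\mcS}$ is similarly a restatement of the two diagrams \eqref{eq:AS_defn_morphs}.

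For fully faithfulness: a morphism $f:D(X,t,b)\to D(Y,t',b')$ in $\mcC_{\opA^\mcS}$ is first of all a bimodule map $\bigoplus_{ij}\mcS(i,Xj)\to\bigoplus_{ij}\mcS(i,Yj)$, hence by Remark~\ref{rem:homs_to_morphs} a natural transformation $\varphi:X\otimes-\Ra Y\otimes-$; the compatibility condition \eqref{eq:M} with $\tau_1$ and $\tau_2$ translates, again via the dictionary, exactly into the two squares \eqref{eq:AS_defn_morphs}, so $f=D(\varphi)$ for a unique $\varphi\in\mcA(\mcS)(X,Y)$. For essential surjectivity: given $(M,\tau_1,\tau_2,\overline{\tau_1},\overline{\tau_2})\in\mcC_{\opA^\mcS}$, the bimodule $M$ is a grade $\mcI\times\mcI$-vector space $M=\bigoplus_{ij}M_{ij}$ with nontrivial action only of $\mcS(i,i)$-$\mcS(j,j)$; I would recover an object $X\in\mcS$ with $M_{ij}\cong\mcS(i,Xj)$ naturally in $i,j$ by a Yoneda/Eilenberg--Watts argument (the functor $j\mapsto M_{-j}$ is representable in an appropriate sense because $\mcS$ is fusion and $\mcS(i,i)\cong\opk$), and then transport $\tau_i$ across the $\sigma_x$ to obtain $t$, $b$ with the required properties; one checks these satisfy \eqref{eq:AS_defn_diag1}--\eqref{eq:AS_defn_diags23} by reading \eqrefT{1}--\eqrefT{3} backwards through the dictionary. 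Simplicity of $\opA^\mcS$ then follows: $\opid_{\mcC_{\opA^\mcS}}=A$ corresponds under $D$ to $E(\opid_{\mcZ(\mcS)})$, so $\End_{\mcC_{\opA^\mcS}}(A)\cong\End_{\mcZ(\mcS)}(\opid)=\opk$.

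The main obstacle I anticipate is not conceptual but organisational: keeping the $\psi^{\pm 2}$-insertions straight. Because $\psi$ in \eqref{eq:psi_nat_transf} is a \emph{central} natural transformation (a scalar $|i|^{1/2}$ on each simple $i$), every $\psi^{-2}_{XU}$ is literally a scalar once evaluated, so in principle it can be slid around freely; but in the structural conditions one must verify that the scalars introduced by (O1)--(O8) and by the $\omega_i$, $\psi_0$, $\psi_i$ abbreviations in Definition~\ref{def:orb_data} precisely match those produced by the $(\tau_i')$-prescription \eqref{eq:tau'-from-t-b}. The cleanest route, which I would follow, is exactly the one advertised in Figure~\ref{fig:orb_ids}: first verify every \eqrefT{}-condition \emph{ignoring} all $\psi$'s (where it reduces to associativity/pentagon/naturality of $t$ and $b$, i.e.\ to \eqref{eq:AS_defn_diag1}--\eqref{eq:AS_defn_diags23}), and only afterwards do a separate pass checking that the $\psi$-powers on both sides agree, using that $\psi_{ij}=\psi_i\otimes\psi_j$ up to the fixed square roots and that $\psi$ is multiplicative under $\otimes$ on simple objects. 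With that two-step strategy the proof reduces to a finite, if tedious, list of diagram identifications, each of which is already essentially recorded in the definition of $\mcA(\mcS)$.
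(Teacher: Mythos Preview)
Your overall strategy matches the paper's: use Remark~\ref{rem:homs_to_morphs} as a dictionary to translate \eqrefT{1}--\eqrefT{7} and \eqref{eq:M} into the defining diagrams of $\mcA(\mcS)$, and run the translation both ways for well-definedness and essential surjectivity. Two points, however, are genuine gaps rather than bookkeeping.

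\textbf{Essential surjectivity.} Your claim that ``the functor $j\mapsto M_{-j}$ is representable \dots\ because $\mcS$ is fusion and $\mcS(i,i)\cong\opk$'' is false as stated. An arbitrary $A$-$A$-bimodule is just an $\mcI\times\mcI$-graded vector space $(M_{ij})$, and such a thing is \emph{not} in general of the form $\mcS(i,Xj)$ for a single $X$: take for instance $M_{ij}=\d_{i,\opid}\d_{j,k}\opk$ for some simple $k\neq\opid$. What forces the correct shape is the $T$-crossing itself. The paper specialises the isomorphism $\tau_1:M\otimes_0 T\xrightarrow{\sim}T\otimes_1 M$ to $i=\opid$ to obtain $M_{lj}\cong\bigoplus_b \mcS(l,bj)\otimes_\opk M_{b\opid}$, and then sets $X:=\bigoplus_b b\otimes M_{b\opid}$. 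Only after this step can one transport $\tau_1,\tau_2$ to natural transformations $t,b$ and read \eqrefT{1}--\eqrefT{3} backwards as \eqref{eq:AS_defn_diag1}--\eqref{eq:AS_defn_diags23}. Your Yoneda/Eilenberg--Watts sentence needs to be replaced by this concrete use of $\tau_1$.

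\textbf{Conditions \eqrefT{6}, \eqrefT{7}.} These are not ``naturality of $t,b$ together with compatibility with $\psi$''; they involve the dual $T^*$ and hence, after translation, the dual bimodule $M^*$. The paper handles them by first identifying $M^*$ with $M^\vee:=\bigoplus_{l,a}\mcS(l,X^*a)$ via an explicit isomorphism $\zeta$ (which carries a dimension factor $|a|/|l|$), rewriting the (co)evaluations accordingly, and only then reducing \eqrefT{6}, \eqrefT{7} to diagrams of the same flavour as \eqrefT{1}--\eqrefT{3}. This step is not hard but it is not naturality, and it is where the sphericality of $\mcS$ is actually used.

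A smaller point: your proposed $\psi$-bookkeeping relies on ``$\psi_{ij}=\psi_i\otimes\psi_j$'', which is false. From \eqref{eq:psi_nat_transf}, $\psi_{ij}$ acts as $|k|^{1/2}$ on the $k$-isotypic summand of $ij$, not as $|i|^{1/2}|j|^{1/2}\,\id_{ij}$. The cancellations in \eqrefT{1}--\eqrefT{3} still go through, but they do so because $\psi$ is a \emph{natural} transformation (so it slides past $t$, $b$, $a^{\pm1}$), not because of any multiplicativity. Your two-pass strategy (structure first, $\psi$'s second) is exactly what the paper does; just replace the multiplicativity argument by naturality of $\psi$.
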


\begin{proof}
The proof that $D(X,t,b)$ is indeed an object in $\mcC_{\opA^\mcS}$ is a little tedious and will be given in Subsection~\ref{subsubsec:taus-satisfy-Tcond} below. For now we assume that this has been done and continue with the remaining points.

To see that $D(\varphi) : D(X,t^X,b^X) \ra D(Y,t^Y,b^Y)$ is a morphism in $\mcC_{\opA^\mcS}$ we have to verify the identities in \eqref{eq:M}. We will demonstrate this for $\tau_1$ as an example. Denote the underlying $A$-$A$-bimodules of $D(X,t^X,b^X)$ and $D(Y,t^Y,b^Y)$ as $M$ and $N$, respectively, and consider the following diagram:
\begin{equation}\label{eq:condition-M-vs-cond-for-t}
\begin{tikzpicture}[baseline={([yshift=-.5ex]current bounding box.center)}]
\node (O1) at (0,4.5) {$M \otimes_0 T$};
\node (O2) at (12,4.5) {$T \otimes_1 M$};
\node (O3) at (0,0) {$N \otimes_0 T$};
\node (O4) at (12,0) {$T \otimes_1 N$};
\node (I1) at (2,3) {\small $\oplus \mcS(l,X(ij))$};
\node (I2) at (6,3) {\small $\oplus \mcS(l,(Xi)j)$};
\node (I3) at (10,3) {\small $\oplus \mcS(l,(Xi)j)$};
\node (I4) at (2,1.5) {\small $\oplus \mcS(l,Y(ij))$};
\node (I5) at (6,1.5) {\small $\oplus \mcS(l,(Yi)j)$};
\node (I6) at (10,1.5) {\small $\oplus \mcS(l,(Yi)j)$};

\path[commutative diagrams/.cd,every arrow,every label]
(I1) edge node {\small $\oplus (t^X_{ij})_*$} (I2)
(I2) edge node {\small $\oplus (\psi_{Xi}^{-2} \otimes \id)_*$} (I3)
(I4) edge node[swap] {\small $\oplus (t^Y_{ij})_*$} (I5)
(I5) edge node[swap] {\small $\oplus (\psi_{Yi}^{-2} \otimes \id)_*$} (I6)
(I1) edge node[swap] {\small $\oplus (\varphi_{ij})_*$} (I4)
(I2) edge node[swap] {\small $\oplus (\varphi_{i}\otimes \id)_*$} (I5)
(I3) edge node[swap] {\small $\oplus (\varphi_{i}\otimes \id)_*$} (I6);

\path[commutative diagrams/.cd,every arrow,every label]
(O1) edge node[swap] {\small $\sigma_0$} (I1)
(O3) edge node {\small $\sigma_0$} (I4)
(O2) edge node {\small $\sigma_1$} (I3)
(O4) edge node[swap] {\small $\sigma_1$} (I6);

\path[commutative diagrams/.cd,every arrow,every label]
(O1) edge node[swap] {\small $\tau_1^M$} (O2)
(O3) edge node {\small $\tau_1^N$} (O4)
(O1) edge node[rotate=90,above] {\small $D(\varphi)\otimes_0\id$} (O3)
(O2) edge node[rotate=90,above] {\small $\id \otimes_1 D(\varphi)$} (O4);
\end{tikzpicture}
\end{equation}
Here, all direct sums run over $i,j,l \in \mcI$. The notation $(-)_*$ stands for post-composition with the corresponding morphism. 
The left innermost square commutes by \eqref{eq:AS_defn_morphs}, and the right innermost square commutes by naturality of $\psi$. The top and bottom squares are just the definition of $\tau_1$ in \eqref{eq:tau'-from-t-b} and \eqref{eq:tau-from-tau'}. 
That the rightmost square commutes is immediate from the definition of $\sigma_1$ in \eqref{eq:sigma_i-def}, while for the leftmost square one needs to invoke in addition the naturality of $\varphi$.

\medskip

So far we have shown that the functor $D$ is well-defined. We now check that it is essentially surjective and fully faithful.

\medskip

Let $(M, \tau_1, \tau_2, \overline{\tau_1}, \overline{\tau_2})$ be an arbitrary object in $\mcC_{\opA^\mcS}$.
As above, we decompose $M = \bigoplus_{i,j\in\mcI} M_{ij}$, where for $M_{ij}$ only the $\mcS(i,i)$-$\mcS(j,j)$ action is non-trivial.
Since $\tau_1$ is an $A$-$AA$-bimodule isomorphism $M \otimes_0 T \xra{\sim} T \otimes_1 M$, we have a graded linear isomorphism
\begin{equation}
\label{eq:TV_tau1_form}
\tau_1: \bigoplus_{i,j,l,a\in\mcI} M_{la} \otimes_\opk \mcS(a,ij) 
\,\xra{~\sim~}\, 
\bigoplus_{i,j,l,b\in\mcI} \mcS(l,bj) \otimes_\opk M_{bi} \ ,
\end{equation}
Specialising to $i=\opid$ gives linear isomorphisms, for all $l,j\in \mcI$,
\begin{equation}
\label{eq:tau1_impl1}
 \underbrace{\bigoplus_{a\in\mcI} M_{la} \otimes_\opk \mcS(a, \opid j)}_{\cong M_{lj}} 
\,\xra{~\sim~}\, 
\bigoplus_{b\in\mcI} \mcS(l,bj) \otimes_\opk M_{b\opid} \ , \\
\end{equation}
Setting $X = \bigoplus_{b\in\mcI} b \otimes M_{b\opid} \in \mcS$, we see that this implies $M \cong \bigoplus_{l,j} \mcS(l, Xj)$ as $A$-$A$-bimodules. We may thus assume without loss of generality that in fact $M = \bigoplus_{l,j} \mcS(l, Xj)$ for some $X \in \mcS$.

Define $t,b$ by inverting the first two defining relations in each of \eqref{eq:tau'-from-t-b} and \eqref{eq:tau-from-tau'} (this is possible by Remark~\ref{rem:homs_to_morphs}). We need to verify that $t,b$ satisfy the conditions in \eqref{eq:AS_defn_diag1} and \eqref{eq:AS_defn_diags23}.

Consider condition \eqrefT{1} satisfied by $\tau_1$. Along the same lines as was done in \eqref{eq:condition-M-vs-cond-for-t}, one can translate \eqrefT{1} into an equality of two graded linear maps $\bigoplus \mcS(l,X(i(jk)))$ $\ra$ $\bigoplus \mcS(l, ((Xi)j)k)$. Both of these maps are given by post-composition, resulting in a commuting diagram of morphisms in $\mcS$, for all $i,j,k$:
\begin{equation}\label{eq:D-functor-T1-cond}
\begin{tikzpicture}[baseline={([yshift=-.5ex]current bounding box.center)}]
\node (P1) at (-5,1) {$X(i(jk))$};
\node (P2) at (0,4) {$(Xi)(jk)$};
\node (P3) at (5,1) {$((Xi)j)k$};
\node (P4) at (-4,-3) {$X((ij)k)$};
\node (P5) at (4,-3) {$(X(ij))k$};

\node (P12) at (-2.5,2.5) {$(Xi)(jk)$};
\node (P23) at (2.5,2.5) {$((Xi)j)k$};
\node (P14) at (-4.5,-1) {$X((ij)k)$};
\node (P53) at (4.5,-1) {$((Xi)j)k$};

\node (P451) at (-1.3,-3) {$X((ij)k)$};
\node (P452) at (1.3,-3) {$(X(ij))k$};

\path[commutative diagrams/.cd,every arrow,every label] 
(P1)  edge node {$t_{i,jk}$} (P12)
(P12) edge node {$\psi^{-2}_{Xi} \otimes \id_{jk}$} (P2) 
(P2)  edge node {$a^{-1}_{Xi,j,k}$} (P23) 
(P23) edge node {$\psi^{-2}_{(Xi)j} \otimes \id_k$} (P3);

\path[commutative diagrams/.cd,every arrow,every label]
(P1)   edge node[swap] {$\id_X \otimes a^{-1}_{ijk}$} (P14)
(P14)  edge node[swap] {$\id_X \otimes (\psi^{-2}_{ij} \otimes \id_k)$} (P4)
(P4)   edge node[rotate=60,pos=0.7] {\makebox[0pt][l]{$\id_X \otimes (\psi^2_{ij} \otimes \id_k)$}} (P451)
(P451) edge node[rotate=60,pos=0.7] {\makebox[0pt][l]{$t_{(ij),k}$}} (P452)
(P452) edge node[rotate=60,pos=0.7] {\makebox[0pt][l]{$\psi^{-2}_{X(ij)}\otimes \id_k$}} (P5)
(P5)   edge node[swap] {$t_{ij} \otimes \id_k$} (P53)
(P53)  edge node[swap] {$(\psi^{-2}_{Xi} \otimes \id_j)\otimes \id_k$} (P3);
\end{tikzpicture}
\end{equation}
Since $t$, $a$ and $\psi$ are natural transformations, one can cancel all arrows with $\psi$, which then yields precisely the diagram \eqref{eq:AS_defn_diag1}.
Similarly, \eqrefT{2}, \eqrefT{3} give the two diagrams in \eqref{eq:AS_defn_diags23}.

It remains to show that $D$ is fully faithful. Faithfulness is clear from \eqref{eq:Ddef-morph}. For fullness, let $f : D(X,t^X,b^X) \to  D(Y,t^Y,b^Y)$ be a morphism in $\mcC_{\opA^\mcS}$. By Remark~\ref{rem:homs_to_morphs}, $f$ is given by post-composition with a natural transformation $\varphi:X \otimes - \Ra Y \otimes -$. The identities \eqref{eq:M} impose that the two diagrams in \eqref{eq:AS_defn_morphs} commute. Thus $\varphi$ is a morphism in $\mcA(\mcS)$ and $f = D(\varphi)$. 
\end{proof}

\begin{cor}
The orbifold datum $\opA^\mcS$ is simple.
\end{cor}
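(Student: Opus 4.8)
The plan is to reduce the statement to a single endomorphism-space computation and then transport it through the two linear equivalences already established. Since $\opid_{\mcC_{\opA^\mcS}} = A$ carries the $T$-crossings $\tau_i^1$ of Example~\ref{ex:obj_A}, by Definition~\ref{def:simple_orb} it suffices to show $\dim\End_{\mcC_{\opA^\mcS}}(A) = 1$. The functors $E : \mcZ(\mcS) \ra \mcA(\mcS)$ of Proposition~\ref{prp:ZS_equiv_AS} and $D : \mcA(\mcS) \ra \mcC_{\opA^\mcS}$ of Proposition~\ref{prp:AS_equiv_CAS} are fully faithful, so it is enough to find an object $(X,t,b) \in \mcA(\mcS)$ with $D(X,t,b)\cong A$ whose underlying object $X\in\mcS$ is isomorphic to the tensor unit $\opid$; the computation of $\End_{\mcC_{\opA^\mcS}}(A)$ then follows formally.

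First I would produce such a preimage by running the essential-surjectivity construction from the proof of Proposition~\ref{prp:AS_equiv_CAS} on the object $A$ itself: it returns $(X,t,b)$ with $X=\bigoplus_{k\in\mcI} k\otimes A_{k\opid}$, where $A=\bigoplus_{i,j\in\mcI}A_{ij}$ is the decomposition of the bimodule $A=\bigoplus_{l\in\mcI}\mcS(l,l)$ according to the left $\mcS(i,i)$- and right $\mcS(j,j)$-action. Since the idempotent $\id_i\in\mcS(i,i)\subseteq A$ acts on the summand $\mcS(l,l)$ from either side by $\d_{il}$, one gets $A_{ij}=\d_{ij}\,\mcS(i,i)$, hence $A_{k\opid}=0$ unless $k=\opid$ and $A_{\opid\opid}=\mcS(\opid,\opid)\cong\opk$. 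Therefore $X\cong\opid$.

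It then remains to chain the equivalences. Full faithfulness of $D$ gives $\End_{\mcC_{\opA^\mcS}}(A)\cong\End_{\mcA(\mcS)}(X,t,b)$. Since $E$ is an equivalence commuting with the forgetful functors to $\mcS$ and $X\cong\opid$, the triple $(X,t,b)$ is isomorphic in $\mcA(\mcS)$ to $E(\opid,\g)$ for some half-braiding $\g$ on $\opid$, so $\End_{\mcA(\mcS)}(X,t,b)\cong\End_{\mcZ(\mcS)}(\opid,\g)$ by full faithfulness of $E$. Any endomorphism of $(\opid,\g)$ in $\mcZ(\mcS)$ is in particular an element of $\End_\mcS(\opid)$, which equals $\opk\,\id_\opid$ because $\mcS$ is fusion (so $\opid$ is simple) and $\opk$ is algebraically closed; conversely $\id_\opid$ does define an endomorphism of $(\opid,\g)$. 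Hence $\End_{\mcZ(\mcS)}(\opid,\g)=\opk\,\id_\opid$, and chaining the isomorphisms yields $\dim\End_{\mcC_{\opA^\mcS}}(A)=1$, i.e.\ $\opA^\mcS$ is simple.

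I do not expect a real obstacle here. The only mildly delicate point is the bookkeeping in the middle step, namely confirming that with the grading conventions used in the proof of Proposition~\ref{prp:AS_equiv_CAS} the object $X=\bigoplus_k k\otimes A_{k\opid}$ indeed collapses to $\opid$; once that is in place everything else is formal. Note that this route deliberately never identifies the half-braiding $\g$ explicitly, since it is immaterial: whatever $\g$ is, the object $(\opid,\g)$ has a one-dimensional endomorphism space purely because its underlying object $\opid\in\mcS$ is simple.
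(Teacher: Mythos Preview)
Your proof is correct and follows essentially the same approach as the paper: both identify a preimage of $\opid_{\mcC_{\opA^\mcS}}$ under $D$ whose underlying $\mcS$-object is $\opid$, and then use that $\opid\in\mcS$ is simple to conclude. The only minor differences are that the paper stops at $\mcA(\mcS)$ and asserts directly that any $(\opid_\mcS,t,b)$ is simple there (invoking semisimplicity of $\mcC_{\opA^\mcS}$ from Proposition~\ref{prp:CA-fin-ssi}), whereas you route one step further through $E$ to $\mcZ(\mcS)$; and the paper does not spell out the computation $A_{k\opid}\cong\delta_{k\opid}\,\opk$, simply stating that for appropriate $t,b$ one has $D(\opid_\mcS,t,b)\cong\opid_{\mcC_{\opA^\mcS}}$, which your explicit bookkeeping makes precise.
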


\begin{proof}
By Proposition~\ref{prp:AS_equiv_CAS}, the functor $D:\mcA(\mcS) \ra \mcC_{\opA^\mcS}$ is a linear equivalence. 
Since $\mcC_{\opA^\mcS}$ is semisimple (Proposition~\ref{prp:CA-fin-ssi}), so is $\mcA(\mcS)$. Any object of the form $(\opid_{\mcS},t,b)$ is simple in $\mcA(\mcS)$, as $\opid_{\mcS}$ is simple in $\mcS$. For an appropriate choice of $t,b$ we have $D(\opid_{\mcS},t,b) \cong \opid_{\mcC_{\opA^\mcS}}$, the tensor unit of $\mcC_{\opA^\mcS}$. Using once more that $D$ is an equivalence, we conclude that $\opid_{\mcC_{\opA^\mcS}}$ is simple in $\mcC_{\opA^\mcS}$.
\end{proof}

\subsubsection{Conditions on $\boldsymbol{T}$-crossings}
\label{subsubsec:taus-satisfy-Tcond}

Here we complete the proof of Proposition~\ref{prp:AS_equiv_CAS} by showing that $D(X,t,b)$ from \eqref{eq:Ddef-value-on-obj} satisfies conditions \eqrefT{1}--\eqrefT{7}.

For condition \eqrefT{1}, the computation is the same as in the proof of essential surjectivity of $D$, just in the opposite direction, i.e.\ one starts by writing \eqref{eq:AS_defn_diag1} as \eqref{eq:D-functor-T1-cond}. Analogously, \eqref{eq:AS_defn_diags23} produces \eqrefT{2}, \eqrefT{3}.

Conditions \eqrefT{4} and \eqrefT{5} are straightforward to check from the definitions \eqref{eq:tau'-from-t-b} and \eqref{eq:tau-from-tau'}.

Since \eqrefT{6} and \eqrefT{7} involve duals, it is helpful to express the (vector space) dual bimodule $M^*$ of $M = \bigoplus_{l,a} \mcS(l,Xa)$ 
in terms of the bimodule $M^\vee := \bigoplus_{l,a} \mcS(l,X^*a)$. Given a basis $\{\mu\}$ of $\mcS(l,Xa)$, we get the basis $\{\mu^*\}$ of the dual vector space $\mcS(l,Xa)^*$ and the basis  $\{\bar\mu\}$ of $\mcS(Xa,l)$, which is dual to $\{\mu\}$ with respect to the composition pairing. Let us fix an isomorphism $M^* \to M^\vee$ as follows:
\begin{equation}
\label{eq:Mstar_Mvee}
\zeta : M^* \longrightarrow M^\vee
~ , ~~
\mu^* \longmapsto 
\frac{|a|}{|l|} \big[ a \xrightarrow{\sim} \opid a
\xrightarrow{ \coev_X \otimes \id } (XX^*)a \xrightarrow{\sim} X(X^*a)
 \xrightarrow{\bar\mu} Xl \big]
\end{equation}
Using $\zeta$, one can translate the evaluation and coevaluation maps from vector space duals to the new duals $M^\vee$. For example,
\begin{align}
\coev_M &:= \big[ A \xrightarrow{~~} M \otimes_A M^* 
\xrightarrow{ \id \otimes \zeta } M  \otimes_A M^\vee \big] 
~~,
\nonumber \\
\coevt_M &:= \big[ A \xrightarrow{~~} M^* \otimes_A M 
\xrightarrow{ \zeta \otimes \id } M^\vee \otimes_A M \big] \ ,
\end{align}
where the unlabelled arrow is the canonical coevaluation in vector spaces. Explicitly, this gives the $A$-$A$-bimodule maps
\begin{align}
	\ev_M &: M^\vee \otimes_A M \to A \ ,
	&\pic[1.5]{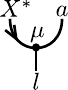} \otimes_\opk \pic[1.5]{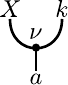} \mapsto \d_{l,k}\pic[1.5]{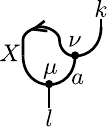},
	\nonumber\\
	\coev_M&: A \to M \otimes_A M^\vee \ ,
& \id_l \mapsto \sum_{a,\mu} \pic[1.5]{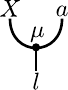} \otimes_\opk \left( \frac{|a|}{|l|} \pic[1.5]{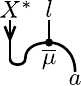} \right)
\nonumber\\
	\evt_M&: M \otimes_A M^\vee \to A \ ,
& \pic[1.5]{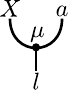} \otimes_\opk \pic[1.5]{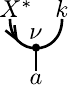} \mapsto \d_{l,k}
\frac{|l|}{|a|}
\pic[1.5]{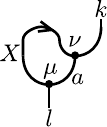}
\nonumber\\
	\coevt_M&: A \to M^\vee \otimes_A M  \ ,
&\id_l \mapsto \sum_{a,\mu} \left(\frac{|l|}{|a|} \pic[1.5]{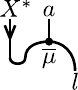}\right) \otimes_\opk \pic[1.5]{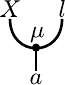}
\end{align}
The choice \eqref{eq:Mstar_Mvee} makes the expression for $\ev_M$ simpler but the other three duality maps still contain the dimension factors.
Using isomorphisms given by composition similar to those in \eqref{eq:sigma_i-def}, one can also write these maps as (by abuse of notation we keep the same names for the maps)
\begin{align}
	&\ev_M: \mcS(l,X^*(Xl)) \ra \mcS(l,l), &&
	\l \mapsto 
	\left[
	\begin{array}{l}
		l\xra{\l}X^*(Xl) \xra{a^{-1}_{X^*,X,l}}(X^*X)l \\
		\hfill \xra{\ev_X \otimes \id_l}\opid l \xra{l_l} l 
	\end{array}
	\right],\nonumber\\
	&\coev_M: \mcS(l,l) \ra \mcS(l,X(X^*l)), && 
	\mu \mapsto
	\left[
	\begin{array}{l}
		l      \xra{\mu}l\xra{l_l^{-1}}\opid l\xra{\coev_X}(XX^*)l\\
		\hfill	   \xra{a_{X,X^*,l}}X(X^*l)
	\end{array}
	\right],\nonumber\\
	&\evt_M: \mcS(l,X(X^*l)) \ra \mcS(l,l), &&
	\nu \mapsto
	\left[
	\begin{array}{l}
		l\xra{\nu}X(X^*l)\xra{\id_X\otimes\psi^{-2}_{X^* l}} X(X^* l)\\
		\hfill\xra{a^{-1}_{X,X^*,l}}(XX^*)l \xra{\evt_X \otimes \id_l}\opid l\\
		\hfill\xra{l_l} l \xra{\psi^2_l} l
	\end{array}
	\right],\nonumber\\
\label{eq:proper_ev-coevt}
	&\coevt_M: \mcS(l,l) \ra \mcS(l,X^*(Xl)), && 
	\xi \mapsto
	\left[
	\begin{array}{l}
		l\xra{\xi}l\xra{l_l^{-1}}\opid l\xra{\coevt_X}(X^*X)l\\
		\hfill\xra{a_{X^*,X,l}}X^*(Xl) \xra{\id_{X^*}\psi^{-2}_{Xl}} X^*(Xl)\\
		\hfill\xra{\id_{X^*} \otimes (\id_X \otimes \psi^2_l)} X^* (Xl)
	\end{array}
	\right].
\end{align}
For example to get the expression for $\coev_M$ one uses the identity:
\begin{equation}
\pic[1.5]{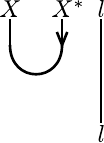} =
\sum_{a,\mu} \frac{|a|}{|l|} \pic[1.5]{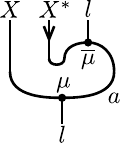}
\end{equation}

Note that these are dualities in $\mcACA$.
To obtain the dualities in $\mcC_{\opA^\mcS}$ some extra $\psi$-insertions are needed, see \eqref{eq:evs_evts_in_D}.

Given this reformulation of the duality morphisms, the verification of \eqrefT{6}, \eqrefT{7} works along the same lines as \eqrefT{1}--\eqrefT{3}.

\subsubsection{Ribbon structure of the composed functor}
\label{subsubsec:F-ribb}

Denoting the composed functor by $F := D \circ E$, we obtain the following corollary to Propositions~\ref{prp:ZS_equiv_AS} and \ref{prp:AS_equiv_CAS}.

\begin{cor}
\label{cor:ZS_equiv_D_linear}
The functor $F:\mcZ(\mcS) \ra \mcC_{\opA^\mcS}$, acting
\begin{itemize}[wide, labelwidth=!, labelindent=0pt]
\item
\it{on objects:}
$F(X,\g) := (\bigoplus_{k,l\in\mcI} \mcS(l,Xk), \tau_1, \tau_2, \overline{\tau_1}, \overline{\tau_2})$, where for all $i,j,l\in\mcI$ the $T$-crossings and their pseudo-inverses are (we omit writing out the isomorphisms $\sigma_i$ from \eqref{eq:sigma_i-def} explicitly)
\begin{align*}
&  \tau_1: \mcS(l,X(ij)) \ra \mcS(l, (Xi)j),
&&
\l \mapsto \left[l \xra{\l} X(ij) \xra{a^{-1}_{X,i,j}} (Xi)j \xra{\psi^{-2}_{Xi}\otimes\id_j} (Xi)j\right]\\
&  \tau_2: \mcS(l,X(ij)) \ra \mcS(l, i(Xj)),
&&
\mu\mapsto
\left[\begin{array}{rl}
	  l \xra{\mu} X(ij) & \xra{a^{-1}_{X,i,j}} (Xi)j \xra{\g_i\otimes\id_j} (iX)j \\
		  		        & \xra{a_{i,X,j}} i(Xj) \xra{\id_i\otimes\psi^{-2}_{Xj}} i(Xj)
\end{array}\right]\\
&  \overline{\tau_1}: \mcS(l, (Xi)j) \ra \mcS(l,X(ij)),
			&&
			\nu\mapsto \left[l \xra{\nu} (Xi)j \xra{a_{X,i,j}} X(ij) \xra{\id_X\otimes\psi^{-2}_{ij}} X(ij)\right]\\
			&  \overline{\tau_2}: \mcS(l,X(ij)) \ra \mcS(l, i(Xj)),
			&&
			\xi \mapsto
			\left[\begin{array}{rl}
				l \xra{\xi} i(Xj) & \xra{a^{-1}_{i,X,j}} (iX)j \xra{\g_i^{-1}\otimes\id_j} (Xi)j\\
				& \xra{a_{X,i,j}} X(ij) \xra{\id_X\otimes\psi^{-2}_{ij}} X(ij)
			\end{array}\right];
		\end{align*}
		\item
		\it{on morphisms:}
		$
		F([(X,\g) \xra{f} (Y,\d)]) :=
		\left[\begin{array}{rcl}
		\mcS(l,Xk) & \ra & \mcS(l,Yk) \quad \forall k,l\in\mcI\\
		{[} l \xra{g} Xk {]} &\mapsto & {[} l \xra{g}Xk \xra{f \otimes \id_k} Yk {]}
		\end{array} \right]
		$
	\end{itemize}
	is a linear equivalence.
\end{cor}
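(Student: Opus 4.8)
The plan is to obtain the corollary directly from Propositions~\ref{prp:ZS_equiv_AS} and~\ref{prp:AS_equiv_CAS}. Those results state that $E:\mcZ(\mcS)\to\mcA(\mcS)$ and $D:\mcA(\mcS)\to\mcC_{\opA^\mcS}$ are $\opk$-linear equivalences, so the composite $F := D\circ E$ is automatically a $\opk$-linear equivalence. The only remaining work is to check that the explicit formulas for $F$ on objects and morphisms displayed in the corollary really describe the composite $D\circ E$; this is pure bookkeeping, carried out by substituting the definition of $E$ into that of $D$.

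For objects, I would recall $E(X,\g) = (X,t^X,b^X)$ with $t^X_{U,V} = a^{-1}_{X,U,V}$ and $b^X_{U,V} = a_{U,X,V}\circ(\g_U\otimes\id_V)\circ a^{-1}_{X,U,V}$ from~\eqref{eq:Functor-E-on-obj}, and then plug these natural transformations into the formulas~\eqref{eq:tau'-from-t-b} defining the families $(\tau_i')_{UV}$, $(\overline{\tau_i}')_{UV}$, and into~\eqref{eq:tau-from-tau'} defining $\tau_i,\overline{\tau_i}$ on $D(X,t^X,b^X)$, with the isomorphisms $\sigma_x$ of~\eqref{eq:sigma_i-def} suppressed as announced in the statement. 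For $\tau_1$ and $\tau_2$ this is a direct substitution and immediately yields the displayed chains of morphisms. For the pseudo-inverses one additionally needs the component inverses $(t^X_{U,V})^{-1} = a_{X,U,V}$, which is immediate, and $(b^X_{U,V})^{-1} = a_{X,U,V}\circ(\g_U^{-1}\otimes\id_V)\circ a^{-1}_{U,X,V}$, which follows since $\g_U$ and the associators are isomorphisms; inserting these into~\eqref{eq:tau'-from-t-b} reproduces the stated formulas for $\overline{\tau_1}$, $\overline{\tau_2}$, including the $\psi^{-2}$-insertions. On morphisms, composing $E(f) = \{f\otimes\id_U\}_{U\in\mcS}$ with the post-composition rule~\eqref{eq:Ddef-morph} for $D$ gives exactly $[l\xra{g}Xk]\mapsto[l\xra{g}Xk\xra{f\otimes\id_k}Yk]$ on each $\mcS(l,Xk)$.

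I do not expect a genuine obstacle here; the one point requiring attention is tracking the $\psi$-insertions and the orientation of the associators when the two functors are composed. In particular one should read each $\overline{\tau_i}$ directly off~\eqref{eq:tau'-from-t-b} rather than by formally inverting $\tau_i$: the $\psi$-powers are arranged so that $\overline{\tau_i}$ is only a pseudo-inverse of $\tau_i$ in the sense of~\eqrefT{4}--\eqrefT{5}. Once the formulas are matched, the equivalence claim is immediate from the two cited propositions, and the corollary is proved.
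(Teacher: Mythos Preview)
Your proposal is correct and matches the paper's approach exactly: the paper simply states that $F := D\circ E$ and presents the corollary as an immediate consequence of Propositions~\ref{prp:ZS_equiv_AS} and~\ref{prp:AS_equiv_CAS}, with the explicit formulas obtained by substituting the definition of $E$ into that of $D$. Your additional care in tracking the $\psi$-insertions and in noting that the $\overline{\tau_i}$ must be read off~\eqref{eq:tau'-from-t-b} rather than obtained by inverting $\tau_i$ is well placed but goes beyond what the paper spells out.
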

Recall that a monoidal structure consists of an isomorphism
\begin{equation}
F_0:\opid_{\mcC_{\opA^\mcS}} \xra{\sim} F(\opid_{\mcZ(\mcS)}),
\end{equation}
in $\mcC_{\opA^\mcS}$ as well as a collection of isomorphisms
\begin{equation}
F_2((X,\g), (Y,\d)):F(X,\g) \otimes_{\mcC_{\opA^\mcS}} F(Y,\d) \ra F(X \otimes_{\mcS} Y, \Gamma^{XY}),
\end{equation}
in $\mcC_{\opA^\mcS}$, natural in $(X,\g), (Y,\d) \in \mcZ(\mcS)$, satisfying the usual compatibility conditions (see e.g. \cite{TuVi}, Sec.1.4).
We set:
\begin{equation}
F_0: \bigoplus_{i\in\mcI} \mcS(i,i) \ra \bigoplus_{i\in\mcI} \mcS(i,\opid i), \quad
\left[i \xra{f} i\right] \mapsto \left[i \xra{f} i \xra{\psi^{-1}_i} i \xra{l_i^{-1}} \opid i\right].
\end{equation}
As in Section~\ref{subsubsec:functorD} we get the isomorphisms
\begin{equation}
F(X,\g) \otimes_{\mcC_{\opA^\mcS}} F(Y,\d) \cong \bigoplus_{l,r\in\mcI} \mcS(l,X(Yr)), \quad
F(X \otimes Y, \Gamma^{XY}) \cong \bigoplus_{l,r\in\mcI} \mcS(l, (XY)r).
\end{equation}
For all $l,r\in\mcI$, set
\begin{equation}
\label{eq:TV_monoidal_struct}
F_2((X,\g), (Y,\d)):
\left[l \xra{f} X(Yr)\right] \mapsto
\left[l \xra{f} X(Yr) \xra{\id_X \otimes \psi_{Yr}} X(Yr) 
\xra{a^{-1}_{X,Y,r}} (XY)r\right].
\end{equation}
One can check that they are indeed morphisms in $\mcC_{\opA^\mcS}$ and satisfy the compatibilities.
$F = (F,F_0,F_2)$ is therefore a monoidal equivalence.

\medskip

Recall, that $F$ is a braided functor if
\begin{equation}
\label{eq:braided_funct_cond}
F_2((Y,\d),(X,\g)) \circ c_{F(X,\g), F(Y,\d)} =
F(c_{(X,\g),(Y,\d)}) \circ F_2((X,\g), (Y,\d)).
\end{equation}
For $M = \bigoplus_{l,r\in\mcI} \mcS(l,Xr)$, $N = \bigoplus_{l,r\in\mcI} \mcS(l,Yr)$ with $T$-crossings 
$\tau_i^M$, $\tau_i^N$, 
$i=1,2$,
let us calculate the braiding morphism $c_{M,N} \in \mcC_{\opA^\mcS}$ explicitly.

Recall that the braiding in $\mcC_{\opA^\mcS}$ is obtained by taking the partial trace of the morphism $\Omega$  defined in \eqref{eq:almost_braiding}.
It amounts to a family of linear maps $\mcS(l, X(Y(ij))) \ra \mcS(l, Y(X(ij)))$, $i,j,l\in\mcI$, which are post-compositions with
\begin{align*}
B_{ij} &\,:=\, \frac{1}{\Dim\mcS} \cdot
\\&
\Big[\, 
X(Y(ij))
\xra{\id_X \otimes \psi_{Y(ij)}} X(Y(ij))
\xra{\id_X \otimes ({\tau'_1}^Y)_{ij}} X((Yi)j) 
\xra{({\tau'_2}^X)_{Yi,j}} (Yi)(Xj)
\\ &
\xra{\psi_{Yi}^2 \otimes \psi_{Xj}^2} (Yi)(Xj) 
\xra{(\id_Y\otimes\psi^2_i)\otimes(\id_X\otimes\psi^2_j)} (Yi)(Xj)
\xra{({\overline{\tau}_1'}^Y)_{i,Xj}} Y(i(Xj)) 
\\ &
\xra{\id_Y \otimes ({\overline{\tau}_2'}^X)_{ij}} Y(X(ij))
\xra{\id_Y \otimes \psi_{X(ij)}} Y(X(ij)) \, \Big] \ .
\end{align*}
We now need to trace the above morphism over $T$, for which we need the dual $T^*$. Similar to Section~\ref{subsubsec:taus-satisfy-Tcond} it is useful to work with $T^\vee := \bigoplus_{i,j,r\in\mcI} \mcS(ij,r)$ instead. 
Given a basis $\{\alpha\}$ of $\mcS(r,ij)$, the basis $\{\alpha^*\}$ of the dual vector space $\mcS(r,ij)^*$ and the composition-dual basis  $\{\bar\alpha\}$ of $\mcS(ij,r)$, we fix the isomorphism $T^* \to T^\vee$, $\alpha^* \mapsto \bar\alpha$. Using this isomorphism, the relevant evaluation and coevaluation maps are
\begin{align*}
	&[\evt_T: T \otimes_{1,2} T^\vee \ra A] &&=
	\left[
	\begin{array}{cccccc}
		\bigoplus_{i,j\in\mcI} & \mcS(l, ij) &\otimes_\opk &\mcS(ij, r) &\ra &\mcS(l,l)\\
		&f &\otimes_\opk  &g & \mapsto & \delta_{k,r} \, g \circ f
	\end{array}
	\right],\\
	&[\coev_T: A \ra T \otimes_{1,2} T^\vee] &&=
	\left[
	\begin{array}{cccccc}
		\mcS(l,l) &\ra &\bigoplus_{i,j\in\mcI} &\mcS(l,ij) &\otimes_\opk &\mcS(ij,l)\\
		\id_l & \mapsto &
\sum_{i,j,\a} & \alpha & \otimes_\opk & \bar\alpha
	\end{array}
	\right].
\end{align*}
All in all, we get the braiding to be the map
\begin{equation}
\label{eq:TV_braiding}
\mcS(l, X(Yr)) \ra \mcS(l, Y(Xr)), \quad
\pic[1.5]{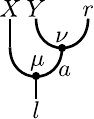} \mapsto \sum_{i,j,\a}\pic[1.5]{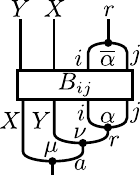}.
\end{equation}
For $M = F(X,\g)$, $N = F(Y,\d)$, the $T$-crossings are as given in Corollary~\ref{cor:ZS_equiv_D_linear}.
Using these expressions, the braiding \eqref{eq:TV_braiding} and the monoidal structure \eqref{eq:TV_monoidal_struct}, one concludes that the left hand side of \eqref{eq:braided_funct_cond} is a family of linear maps $\mcS(l,X(Yr)) \ra \mcS(l, (YX)r)$, $i,j,l\in\mcI$, obtained from post-composition with morphisms $X(Yr) \ra (YX)r$, which in graphical calculus are as shown in Figure~\ref{fig:big-diag-calc}.
 
In the last equality there we used
\begin{equation}
\sum_{i,j} |i|\cdot |j|\cdot N_{ij}^r
= \sum_{i,j} |i|\cdot |j^*| \cdot N_{ir^*}^{j^*}
= \sum_{i} |i| \cdot |i| \cdot |r|
= \dim\mcS \cdot |r|.
\end{equation}

\begin{figure}[p]
	\vspace*{-2em}	
	\begin{align*}
		&\frac{1}{\dim\mcS}\sum_{i,j,\a}\pic[1.5]{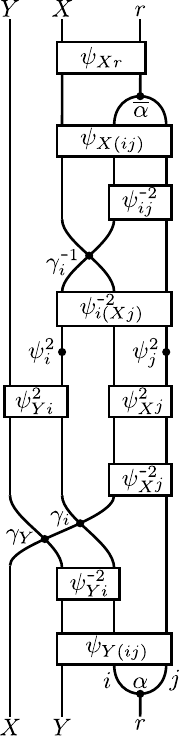}
		= \frac{1}{\dim\mcS}\sum_{i,j,\a} \pic[1.5]{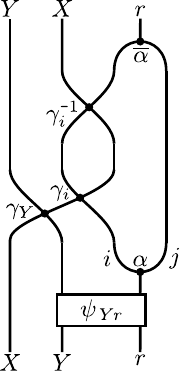} \cdot \frac{|i|\cdot |j|}{|r|}\\ \nonumber
		&= \frac{1}{\dim\mcS}\sum_{i,j,\a} \pic[1.5]{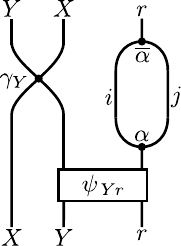} \cdot \frac{|i|\cdot |j|}{|r|}
		= \frac{1}{\dim\mcS}\sum_{i,j} \pic[1.5]{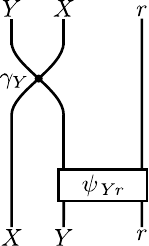} \cdot \frac{|i| \cdot |j| \cdot N_{ij}^r}{|r|}\\ \nonumber
		&= \pic[1.5]{425_braiding_4.pdf}.
	\end{align*}
	\caption{Left hand side of \eqref{eq:braided_funct_cond}.
		Here, in the first equality one uses the natural transformation property of $\psi$ and the half-braidings, in the third we abbreviate $N_{ij}^r = \dim \mcS(r,ij)$.}
	\label{fig:big-diag-calc}.
\end{figure}

\medskip

Substituting the braiding of $\mcZ(\mcS)$ as defined in \eqref{eq:ZS_br_tw}, one immediately finds the right hand side of \eqref{eq:braided_funct_cond} to be given by post-composition with the morphism in the last diagram of Figure~\ref{fig:big-diag-calc}.
The condition \eqref{eq:braided_funct_cond} then holds and hence $F$ is a braided equivalence.

Finally, recall that the twist of $M\in\mcC_{\opA^\mcS}$ is given by the morphism in \eqref{eq:twist}.
Using the calculation in Figure \ref{fig:big-diag-calc} and the expressions \eqref{eq:proper_ev-coevt} for (co-)evaluation maps one computes that the twist $\theta_{F(X,\g)}$ is a family of maps $\mcS(l,Xk) \ra \mcS(l,Xk)$, obtained from post-composition with
\begin{equation}
\pic[1.5]{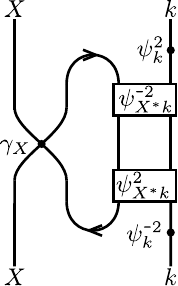} =
\pic[1.5]{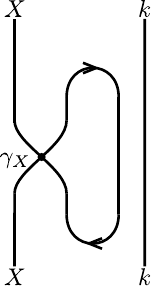} \stackrel{\eqref{eq:ZS_br_tw}}{=}
\pic[1.5]{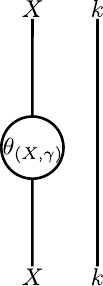},
\end{equation}
which is the same morphism as $F(\theta_{(X,g)})$.
$F$ is therefore an equivalence of ribbon fusion categories and with that the proof of Theorem \ref{thm:D_equiv_centre} is complete.

\newpage

\appendix
\section{Appendix}
\subsection{Useful identities for orbifold data and Wilson lines}
\label{app:identities}
Since $\a:T\otimes_2 T \ra T\otimes_1 T$, $\abar:T \otimes_1 T \ra T \otimes_2 T$ are $A$-$AAA$-bimodule morphisms, one has
\begin{equation}
\label{eq:a_proj_ids}
\pic[1.5]{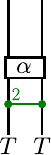}    = \pic[1.5]{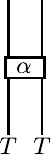}    = \pic[1.5]{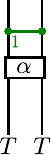}, \qquad
\pic[1.5]{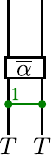} = \pic[1.5]{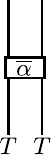} = \pic[1.5]{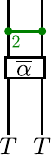},
\end{equation}
\begin{equation}
\label{eq:a_action_ids}
\pic[1.5]{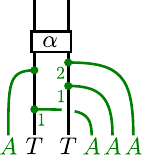}    = \pic[1.5]{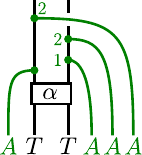}, \qquad
\pic[1.5]{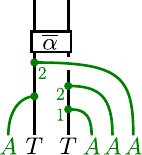} = \pic[1.5]{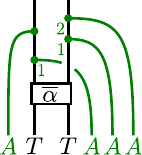}.
\end{equation}
In particular, one can commute the $\psi$-insertions with $\a$, $\abar$ as follows:
\begin{equation}
\label{eq:a_psis_proj}
\pic[1.5]{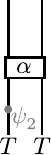} =
\pic[1.5]{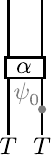} =
\pic[1.5]{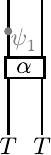} =
\pic[1.5]{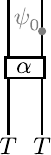}
\end{equation}
\begin{equation}
\label{eq:abar_psis_proj}
\pic[1.5]{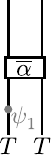} =
\pic[1.5]{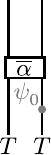} =
\pic[1.5]{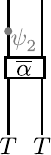} =
\pic[1.5]{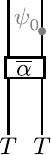}
\end{equation}
\begin{equation}
\label{eq:a_psis_action}
\pic[1.5]{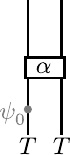} = \pic[1.5]{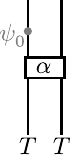}, \quad
\pic[1.5]{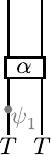} = \pic[1.5]{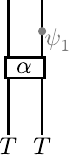}, \quad
\pic[1.5]{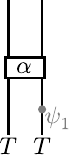} = \pic[1.5]{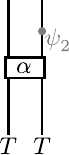}, \quad
\pic[1.5]{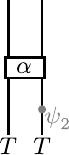} = \pic[1.5]{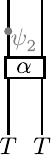},
\end{equation}
\begin{equation}
\label{eq:abar_psis_action}
\pic[1.5]{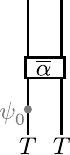} = \pic[1.5]{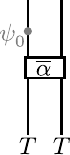}, \quad
\pic[1.5]{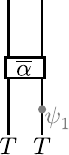} = \pic[1.5]{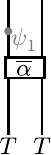}, \quad
\pic[1.5]{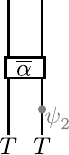} = \pic[1.5]{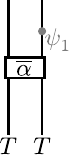}, \quad
\pic[1.5]{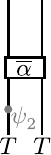} = \pic[1.5]{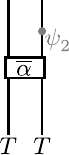},
\end{equation}
Similarly, $T$-crossings, being $A$-$AA$-bimodule morphisms $\tau_i:M \otimes_0 T \ra T \otimes_i M$, $\overline{\tau_i}:T \otimes_i M \ra M \otimes_0 T$, $i=1,2$, commute with $\psi$-insertions in the following way:
\begin{equation}
\label{eq:tau_psis}
\pic[1.5]{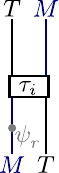} = \pic[1.5]{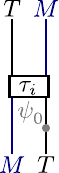}, \quad
\pic[1.5]{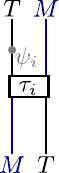} = \pic[1.5]{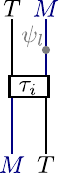}, \quad
\pic[1.5]{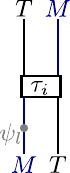} = \pic[1.5]{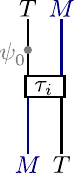}, \quad
\pic[1.5]{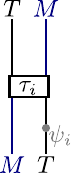} = \pic[1.5]{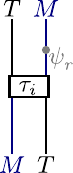},
\end{equation}
\begin{equation}
\label{eq:taubar_psis}
\pic[1.5]{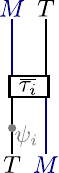} = \pic[1.5]{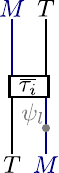}, \quad
\pic[1.5]{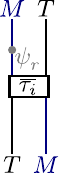} = \pic[1.5]{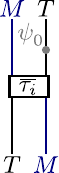}, \quad
\pic[1.5]{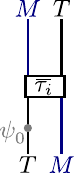} = \pic[1.5]{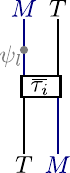}, \quad
\pic[1.5]{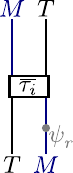} = \pic[1.5]{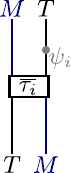},
\end{equation}

The following identities are dual versions of \eqrefO{6} and \eqrefO{7}:
\begin{equation*}
\begin{minipage}{0.4\textwidth}
\begin{equation}
\label{eq:O6-star} \tag{$\text{O9}'$}
\pic[1.5]{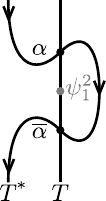} = \pic[1.5]{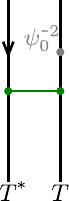}
\end{equation}
\end{minipage}\qquad
\begin{minipage}{0.4\textwidth}
\begin{equation}
\label{eq:O7-star} \tag{$\text{O10}'$}
\pic[1.5]{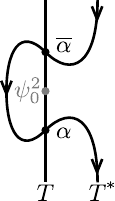} = \pic[1.5]{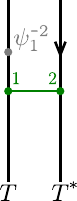}
\end{equation}
\end{minipage}
\end{equation*}

Finally, one can show that \eqrefT{1}-\eqrefT{3} imply:
\begin{gather*}
\begin{minipage}{0.33\textwidth}
\begin{equation}
\label{eq:T1p} \tag{$\text{T8}'$}
\pic[1.5]{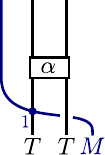} = \pic[1.5]{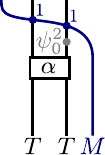}
\end{equation}
\end{minipage}
\begin{minipage}{0.33\textwidth}
\begin{equation}
\label{eq:T2p} \tag{$\text{T9}'$}
\pic[1.5]{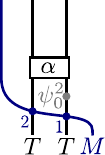} = \pic[1.5]{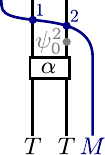}
\end{equation}
\end{minipage}
\begin{minipage}{0.33\textwidth}
\begin{equation}
\label{eq:T3p} \tag{$\text{T10}'$}
\pic[1.5]{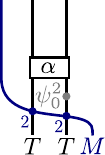} = \pic[1.5]{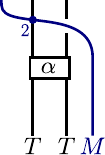}
\end{equation}
\end{minipage}\\
\begin{minipage}{0.33\textwidth}
\begin{equation}
\label{eq:T1b} \tag{$\text{T11}'$}
\pic[1.5]{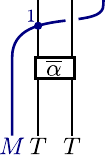} = \pic[1.5]{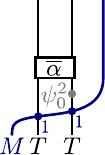}
\end{equation}
\end{minipage}
\begin{minipage}{0.33\textwidth}
\begin{equation}
\label{eq:T2b} \tag{$\text{T12}'$}
\pic[1.5]{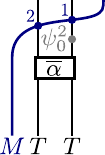} = \pic[1.5]{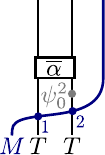}
\end{equation}
\end{minipage}
\begin{minipage}{0.33\textwidth}
\begin{equation}
\label{eq:T3b} \tag{$\text{T13}'$}
\pic[1.5]{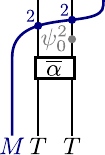} = \pic[1.5]{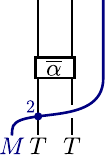}
\end{equation}
\end{minipage}\\
\begin{minipage}{0.33\textwidth}
\begin{equation}
\label{eq:T1bp} \tag{$\text{T14}'$}
\pic[1.5]{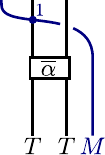} = \pic[1.5]{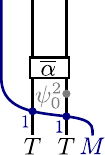}
\end{equation}
\end{minipage}
\begin{minipage}{0.33\textwidth}
\begin{equation}
\label{eq:T2bp} \tag{$\text{T15}'$}
\pic[1.5]{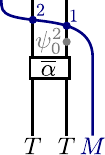} = \pic[1.5]{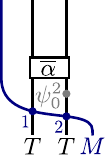}
\end{equation}
\end{minipage}
\begin{minipage}{0.33\textwidth}
\begin{equation}
\label{eq:T3bp} \tag{$\text{T16}'$}
\pic[1.5]{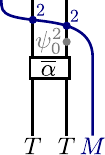} = \pic[1.5]{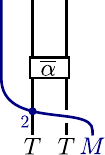}
\end{equation}
\end{minipage}
\end{gather*}
\subsection{Monadicity for separable biadjunctions}
\label{App:sep_biadj}
We will use string diagrams for $2$-categories, as reviewed in~\cite[Sec.\,6.1]{FSV}.

Let $\mcA$, $\mcB$ be categories, and let $X:\mcA \ra \mcB$, $Y: \mcB \ra \mcA$ be biadjoint functors with units and counits denoted by
\begin{equation}
\pic[1.5]{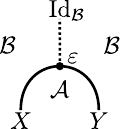},  \qquad
\pic[1.5]{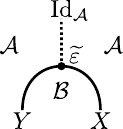}, \qquad
\pic[1.5]{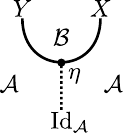},  \qquad
\pic[1.5]{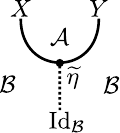}.
\end{equation}
Furthermore we will assume that this biadjunction is separable, i.e.\ the natural transformation
\begin{equation}
\pic[1.5]{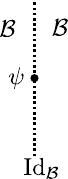} := \pic[1.5]{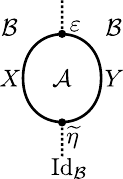}.
\end{equation}
is invertible.
The endofunctor $T:= [YX: \mcA \ra \mcA]$ becomes
a $\Delta$-separable
Frobenius algebra in the strict monoidal category $\End\mcA$ via the structure morphisms
\begin{align} \nonumber
&  \pic[1.5]{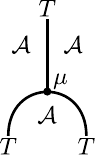}   := \pic[1.5]{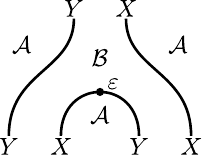},
&& \pic[1.5]{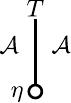}   := \pic[1.5]{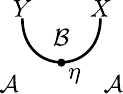},\\
&  \pic[1.5]{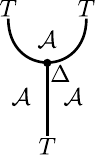} := \pic[1.5]{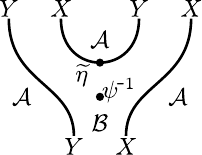},
&& \pic[1.5]{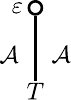} := \pic[1.5]{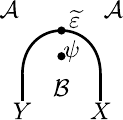}.
\end{align}
Let $\mcA^T$ be the category of \it{$T$-modules in $\mcA$}.
Its objects are pairs 
$$
	\big( \, U \in \mcA \,,\, [\rho:T(U) \ra U] \,\big)
$$ 
and a morphism $(U,\rho) \ra (U',\rho')$ is a morphism $[f:U \ra U'] \in \mcA$, such that the following diagrams commute:
\begin{equation}
\label{eq:T-mod_conds}
\begin{tikzcd}
TT(U)\arrow[r, "\mu_U"]\arrow[d, "T(\rho)"]
& T(U)\arrow[d, "\rho"] \\
T(U)\arrow[r, "\rho"]
& T
\end{tikzcd},\quad
\begin{tikzcd}
U \arrow[rd, "\id", swap] \arrow[r, "\eta_U"]
&T(U) \arrow[d, "\rho"]\\
&U
\end{tikzcd}, \quad
\begin{tikzcd}
T(U)\arrow[r, "T(f)"]\arrow[d, "\rho"]
& T(U')\arrow[d, "\rho'"] \\
U\arrow[r, "f"]
& U'
\end{tikzcd}.
\end{equation}

Let $\star$ be the category with only one object and only the identity morphism.
In what follows, it is going to be useful to identify any category $\mcA$ with the category of functors $\star \ra \mcA$ and natural transformations in the obvious way.
The conditions \eqref{eq:T-mod_conds} can then be written graphically as
\begin{align} \nonumber
\pic[1.5]{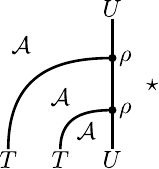}  = \pic[1.5]{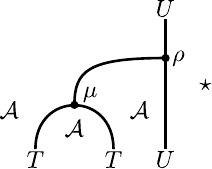}&, \quad
\pic[1.5]{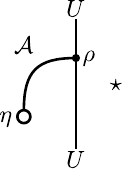}  = \pic[1.5]{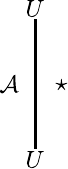},\\
\pic[1.5]{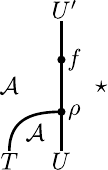} &= \pic[1.5]{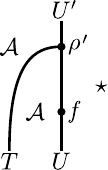}.
\end{align}

Define the functor $\widehat{Y}: \mcB \ra \mcA^T$ to be the same as $Y$, except that the image is equipped with the following $T$-action:
\begin{equation}
\pic[1.5]{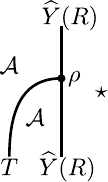} \quad := \pic[1.5]{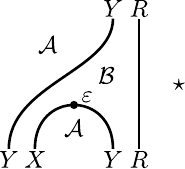}.
\end{equation}
\begin{defn}
Let $\mcA$ be a category.
\begin{itemize}
\item
An idempotent $[p:U \ra U] \in \mcA$ is called split, if it has a retract, i.e.\ a triplet $(S, e, r)$ where $S\in\mcA$, $e: S \ra U$, $r: U \ra S$, such that $e$ is mono, $r \circ e = \id_S$, $e \circ r = p$.
\item
$\mcA$ is called idempotent complete if every idempotent is split.
\end{itemize}
\end{defn}
\begin{prp}
\label{thm:idemp_compl_equiv}
If $\mcB$ is idempotent complete, then
$\widehat{Y}$ is an equivalence.
\end{prp}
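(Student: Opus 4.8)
This is the classical (barr-beck style) monadicity statement, now for a \emph{separable} biadjunction, and the plan is to construct an explicit quasi-inverse to $\widehat{Y}$ rather than verify an abstract monadicity criterion. Write $T = YX$ with its $\Delta$-separable Frobenius structure in $\End\mcA$, let $\psi := \varepsilon \circ \widetilde\eta : \Id_\mcB \Ra \Id_\mcB$ be the separability isomorphism, and recall $\widehat Y$ sends $B \in \mcB$ to $YB$ with $T$-action $\rho_B := Y(\varepsilon_B) : YXYB \to YB$. The first step is to build a candidate inverse functor $G : \mcA^T \to \mcB$. Given a $T$-module $(U,\rho)$, the morphism $\rho : YX(U) \to U$ is split epi in $\mcA$ (a section is $\eta_U$, by the unit axiom), so $X\rho : XYX(U) \to X(U)$ has a natural splitting and we can form the idempotent $e := X(\rho) \circ \widetilde\varepsilon_{X(U)} : X(U) \to X(U)$ (using the counit $\widetilde\varepsilon : XY \Ra \Id_\mcB$ of the other adjunction); one checks using the Frobenius relations and the module axioms \eqref{eq:T-mod_conds} that $e$ is idempotent. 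Since $\mcB$ is idempotent complete, set $G(U,\rho) := \operatorname{im}(e)$, with the obvious action on morphisms via the retract; the $\psi$-insertions will be needed to make the various maps well defined, exactly as in the ambient paper.

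Next I would verify the two composites are naturally isomorphic to identities. For $\widehat Y \circ G \cong \Id_{\mcA^T}$: starting from $(U,\rho)$, one applies $Y$ to the retract of $e$ and uses the Frobenius/separability relations together with the module axiom $\rho \circ \mu_U = \rho \circ T(\rho)$ to identify $Y(\operatorname{im} e)$ with $U$ as a $T$-module; the key computation is that the composite $U \xrightarrow{\eta_U} YX(U) \xrightarrow{Y(r)} Y(\operatorname{im} e) \xrightarrow{Y(i)} YX(U) \xrightarrow{\rho} U$ equals $\id_U$, which reduces after unwinding to the $\Delta$-separability of $T$ (i.e.\ $\mu \circ \Delta = \id_T$) plus the triangle identities. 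For $G \circ \widehat Y \cong \Id_\mcB$: here $G(\widehat Y B) = G(YB, Y\varepsilon_B) = \operatorname{im}\big(X(Y\varepsilon_B)\circ\widetilde\varepsilon_{XYB}\big)$, and one computes that this idempotent on $XYB$ is conjugate (via $\widetilde\varepsilon_B$ and $\widetilde\eta_B$) to $\widetilde\varepsilon_B^{-1}\circ(\text{something})$; concretely the idempotent's image is identified with $B$ using precisely that $\psi = \varepsilon\circ\widetilde\eta$ is invertible — this is the only place separability is essential. Finally I would check these isomorphisms are natural in the respective variables, which is routine from naturality of all the (co)units.

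The main obstacle is the bookkeeping in showing $e$ is idempotent and that $G$ is functorial: there are two units and two counits in play (the adjunctions $X \dashv Y$ and $Y \dashv X$), four structure maps of the Frobenius algebra $T$, and the separability isomorphism $\psi$, and one has to insert $\psi^{\pm 1}$ in the right places so that $G$ lands in $\mcB$ and not merely in some Karoubi completion of a subcategory. I expect the cleanest route is to do everything in the $2$-categorical string calculus of \cite[Sec.\,6.1]{FSV} (as the paper already sets up), where idempotency of $e$ becomes a short diagram manipulation using the Frobenius relation and one triangle identity, and the two natural isomorphisms become bending/straightening arguments. Everything else — existence of splittings, the zig-zag identities, $\Delta$-separability — is quoted from the hypotheses or from the Frobenius structure already established, so no genuinely new input beyond idempotent completeness of $\mcB$ is required.
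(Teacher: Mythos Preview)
Your approach is essentially identical to the paper's: construct an explicit quasi-inverse $G:\mcA^T\to\mcB$ by splitting an idempotent on $X(U)$, then check the two composites against the identities. There is one notational slip to fix: you write $e := X(\rho)\circ\widetilde\varepsilon_{X(U)}$ and describe $\widetilde\varepsilon$ as a counit $XY\Rightarrow\Id_\mcB$, but then $e$ does not type-check as an endomorphism of $X(U)$. What you want (and what the paper's $p_M$ is) is $e := X(\rho)\circ\widetilde\eta_{X(U)}$, using the \emph{unit} $\widetilde\eta:\Id_\mcB\Rightarrow XY$ of the adjunction $Y\dashv X$. With that correction your $e$ is precisely the paper's idempotent.

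Your worry about needing $\psi^{\pm1}$-insertions to make $G$ land in $\mcB$ is overstated: since $\mcB$ is idempotent complete, $\operatorname{im}(e)$ exists in $\mcB$ on the nose and $G$ is already functorial without any correction. Separability enters exactly once, in the isomorphism $G\widehat Y\cong\Id_\mcB$, as you correctly anticipate; the paper exhibits the mutually inverse maps $\varepsilon_R$ and $\widetilde\eta_R$ (up to the retract) directly.
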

\begin{proof}
We will give an inverse $\widehat{X}: \mcA^T \ra \mcB$.
Let $M \in \mcA^T$.
Define the following morphism $[p_M:X(M) \ra X(M)] \in \mcB$:
\begin{equation}
p_M := \pic[1.5]{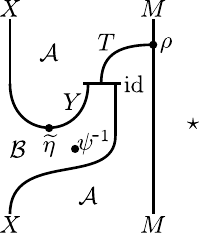}.
\end{equation}
One quickly checks that it is an idempotent.
Set $\widehat{X}(M) = \im p$.
To prove that it is indeed an inverse, one computes
\begin{equation}
\widehat{X}\widehat{Y}(R) = \im \pic[1.5]{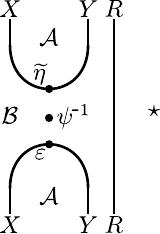}, \quad
\widehat{Y}\widehat{X}(M) = \im \pic[1.5]{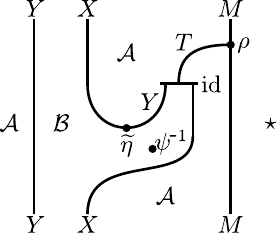}.
\end{equation}
The morphisms in pairs
\begin{equation*}
\pic[1.5]{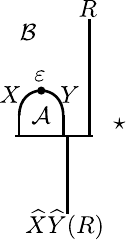}, \quad \pic[1.5]{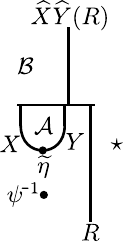} \qquad\text{and}\qquad
\pic[1.5]{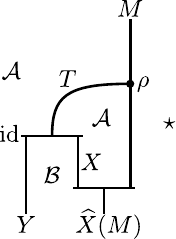}, \quad \pic[1.5]{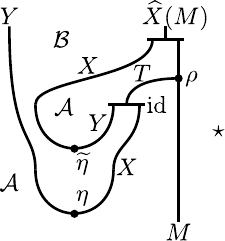}
\end{equation*}
are then inverses of each other.
\end{proof}

Now let $\mcA$, $X$, $Y$ (and hence also $\widehat{X}$, $\widehat{Y}$) in addition be $\opk$-linear additive categories and functors.
\begin{prp}
\label{thm:ssi_cond}
Suppose $\mcA$ is idempotent complete and finitely semisimple.${}^{\ref{fn:finitely-ssi}}$
Then so is $\mcA^T$.
\end{prp}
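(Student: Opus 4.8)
The plan is to recognise $\mcA^T$ as the Eilenberg--Moore category of the $\Delta$-separable Frobenius monad $T=YX$ on $\mcA$ (whose Frobenius and separability structure was exhibited just above), and then to deduce semisimplicity by a Maschke-type averaging argument, with the finiteness of the set of simple objects coming from a free-module argument.

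I would begin with two preliminary observations. First, $\mcA^T$ is idempotent complete: an idempotent in $\mcA^T$ is in particular an idempotent in $\mcA$ and hence splits there as some $(S,e,r)$; equipping $S$ with the $T$-action $\rho_S:=r\circ\rho\circ T(e)$ makes $e$ and $r$ into morphisms of $\mcA^T$, so $(S,e,r)$ is already a retract in $\mcA^T$ --- this is the same computation as in the proof of Proposition~\ref{prp:CA-fin-ssi}. Second, every $T$-module is a retract of a free one: writing $\operatorname{Free}_T(V):=(T(V),\mu_V)$ for the free module and recalling from Section~\ref{subsec:FAs_and_mods} that a module $(U,\rho)$ over the Frobenius algebra $T$ carries a canonical coaction $\delta_U\colon U\to T(U)$, the Frobenius axioms for $T$ say exactly that $\delta_U$ is a morphism $(U,\rho)\to\operatorname{Free}_T(U)$ in $\mcA^T$, while $\Delta$-separability ($\mu\circ\Delta=\id_T$) together with the unit axioms gives $\rho\circ\delta_U=\id_U$. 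Thus $(U,\rho)$ is a retract of $\operatorname{Free}_T(U)$.

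The core of the argument is then a splitting statement. Since $\mcA$ is finitely semisimple over the algebraically closed field $\opk$ it is abelian with finite-dimensional $\Hom$-spaces, and $T=YX$ --- an additive endofunctor of a semisimple abelian category --- is exact; hence $\mcA^T$ is an abelian $\opk$-linear category whose $\Hom$-spaces embed into those of $\mcA$ and are therefore finite-dimensional as well. Given a monomorphism $\iota\colon(U',\rho')\hookrightarrow(U,\rho)$ in $\mcA^T$, I pick a retraction $r\colon U\to U'$ in $\mcA$ and average it to $\bar r:=\rho'\circ T(r)\circ\delta_U$. Naturality of $\delta$ gives $\bar r\circ\iota=\rho'\circ\delta_{U'}=\id_{U'}$, and a short diagram chase using the module axiom for $\rho'$, naturality of $\mu$, and the fact that $\delta$ is a morphism into the free module shows that $\bar r$ is a morphism in $\mcA^T$. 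So every monomorphism in $\mcA^T$ splits; combined with finite-dimensionality of the $\Hom$-spaces this forces every object of $\mcA^T$ to be a finite direct sum of simple objects, i.e.\ $\mcA^T$ is semisimple. For the finiteness of $\Irr_{\mcA^T}$: by the retract statement any simple $\Lambda\in\mcA^T$ is a direct summand of $\operatorname{Free}_T(\Lambda)$, and decomposing the underlying object of $\Lambda$ into simples $s\in\Irr_\mcA$ exhibits $\Lambda$ as a summand of some $\operatorname{Free}_T(s)$; each $\operatorname{Free}_T(s)$ has endomorphism algebra $\Hom_\mcA\big(s,T(s)\big)$, which is finite-dimensional, hence $\operatorname{Free}_T(s)$ has finite length in the semisimple category $\mcA^T$, and therefore only finitely many isomorphism classes of simple objects arise.

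The only point that is not pure bookkeeping is the verification, used twice above, that the coaction $\delta_U\colon U\to T(U)$ is a morphism of $T$-modules into $\operatorname{Free}_T(U)$ (equivalently, that the averaged map $\bar r$ lands in $\mcA^T$); I expect this to be the main --- though still routine --- obstacle, and it reduces to a short string-diagram computation in $\End\mcA$ from the Frobenius relations for $T$. Everything else is the standard interaction between retracts, semisimplicity, and finite-dimensional $\Hom$-spaces.
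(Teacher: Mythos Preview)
Your proof is correct and follows essentially the same route as the paper: idempotent completeness via pushing retracts down to $\mcA$ and back, semisimplicity via the Maschke-type averaging $\bar r=\rho'\circ T(r)\circ\delta_U$ (this is exactly the paper's morphism $\pi$), and finiteness via the observation that every module is a retract of a free/induced one. The only cosmetic difference is that you pass through abelianness of $\mcA^T$ and finite-dimensional $\Hom$-spaces, whereas the paper stays at the level of idempotent completeness to extract complements; both are fine.
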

\begin{proof}
We first show
idempotent completeness of $\mcA^T$. Given an idempotent $p : M \to M$ in $\mcA^T$ and a retract $e : S \to M$, $r  : M \to S$ in $\mcA$ with $p = e \circ r$, one can equip $S$ with a $T$-action as follows,
\begin{equation}
\rho^S = \big[ T(S) \xrightarrow{T(e)} T(M) \xrightarrow{\rho^M} M \xrightarrow{r} S \big] \ .
\end{equation}
With respect to this action, $e$ and $r$ are morphisms in $\mcA^T$, so that $(S,e,r)$ becomes a retract in $\mcA^T$.

Next we show semisimplicity  of $\mcA^T$.
Let $M,N\in\mcA^T$ and let $\imath:M \ra N$ be mono in $\mcA^T$.
Since $\mcA$ is semisimple, there is $\widetilde{\pi}: N \ra M$ in $\mcA$, such that $\widetilde{\pi}\circ\iota = \id_M$.
Define
\begin{equation}
\pi := \pic[1.5]{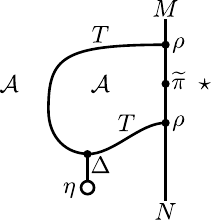}.
\end{equation}
One checks that $\pi:N\ra M$ is a morphism in $\mcA^T$ and
\begin{equation*}
\pi \circ \imath
= \pic[1.5]{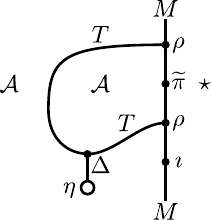} = \pic[1.5]{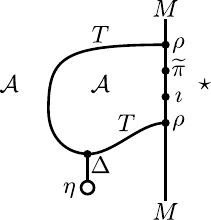} = \pic[1.5]{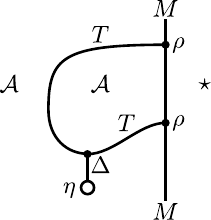} = \id_M.
\end{equation*}
It follows that $N \simeq M \oplus X$ for $X = \ker (\imath \circ \pi)$, 
and so all subobjects are direct summands. The kernel exists as it is the image of the idempotent $\id_N - \imath \circ \pi$.

For finiteness we show that every $T$-module $M\in\mcA^T$ is a submodule of an \it{induced} $T$-module, i.e.\ a one of the form
$\Ind(U) := [\star \xra{U} \mcA \xra{T} \mcA]$ for some object $U\in\mcA$.
Indeed, pick $U=M$ and the following morphisms:
$\Upsilon: M \ra \Ind(M)$ and $\Pi: \Ind(M) \ra M$
\begin{equation}
\Upsilon := \pic[1.5]{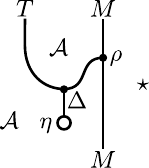}, \qquad
\Pi := \pic[1.5]{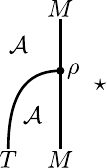}
\end{equation}
One can check that $\Upsilon$ and $\Pi$ are module morphisms and that $\Pi\circ\Upsilon = \id_M$, hence $M$ is indeed a submodule of $\Ind(M)$.
Every simple $T$-module is then a submodule of $\Ind(V)$ where $V\in\mcA$ is simple and since there are finitely many of those, $\mcA^T$ must have finitely many simple objects.
\end{proof}

\newcommand{\arxiv}[2]{\href{http://arXiv.org/abs/#1}{#2}}
\newcommand{\doi}[2]{\href{http://doi.org/#1}{#2}}


\begin{thebibliography}{XXXX}
\setlength{\parskip}{0pt}
\setlength{\itemsep}{4pt}

\bibitem[CGPW]{CGPW}
S.X.~Cui, C.~Galindo, J.Y.~Plavnik, Z.~Wang,
\textsl{On gauging symmetry of modular categories},
\doi{10.1007/s00220-016-2633-8}{Commun.\ Math.\ Phys.\ \textbf{348} (2016) 1043--1064},
\arxiv{1510.03475}{[1510.03475 [math.QA]]}.

\bibitem[CR]{CR}
N.~Carqueville, I.~Runkel,
\textsl{Rigidity and defect actions in Landau-Ginzburg models},
\doi{10.1007/s00220-011-1403-x}{Commun.\ Math.\ Phys.\ \textbf{310} (2012) 135--179},
\arxiv{1006.5609}{[1006.5609 [hep-th]]}.

\bibitem[CRS1]{CRS1}
N.~Carqueville, I.~Runkel, G.~Schaumann, 
\textsl{Orbifolds of $n$-dimensional defect TQFTs}, 
\doi{10.2140/gt.2019.23.781}{Geom.\ Topol.\ \textbf{23} (2019) 781--864},
\arxiv{1705.06085}{[1705.06085 [math.QA]]}.

\bibitem[CRS2]{CRS2}
N.~Carqueville, I.~Runkel, G.~Schaumann, 
\textsl{Line and surface defects in Reshetikhin-Turaev TQFT},  
\doi{10.4171/QT/121}{Quantum Topology \textbf{10} (2019) 399--439}, 
\arxiv{1710.10214}{[1710.10214 [math.QA]]}.

\bibitem[CRS3]{CRS3}
N.~Carqueville, I.~Runkel, G.~Schaumann,
\textsl{Orbifolds of Reshetikhin-Turaev TQFTs},
\arxiv{1809.01483}{1809.01483 [math.QA]}.

\bibitem[CZW]{CZW}
S.X.~Cui, M.S.~Zini, Z.~Wang,
\textsl{On generalized symmetries and structure of modular categories},
\arxiv{1809.00245}{1809.00245 [math.QA]}.

\bibitem[DMNO]{DMNO}
A.~Davydov, M.~M\"uger, D.~Nikshych, V.~Ostrik,
\textsl{The Witt group of non-degenerate braided fusion categories},
\doi{10.1515/crelle.2012.014}{J.\ reine und angew.\ Math.\ \textbf{677} (2013) 135--177},
\arxiv{1009.2117}{[1009.2117 [math.QA]]}.

\bibitem[DGNO]{DGNO}
V.~Drinfeld, S.~Gelaki, D.~Nikshych, V.~Ostrik,
\textsl{On braided fusion categories I},
\doi{10.1007/s00029-010-0017-z}{Sel.\ Math.\ New Ser.\ \textbf{16} (2010) 1--119},
\arxiv{0906.0620}{[0906.0620 [math.QA]]}.

\bibitem[EGNO]{EGNO}
P.~Etingof, S.~Gelaki, D.~Nikshych, V.~Ostrik, 
\textit{Tensor categories},
Math.\ Surveys Monographs \textbf{205}, AMS, 2015.

\bibitem[ENO]{ENO}
P.~Etingof, D.~Nikshych, V.~Ostrik, 
\textit{On Fusion Categories}, 
\href{https://www.jstor.org/stable/20159926}{Ann.\ Math.\ \textbf{162} (2005) 581--642}
\arxiv{math/0203060}{[math.QA/0203060]}.

\bibitem[FFRS]{FFRS}
J.~Fr\"ohlich, J.~Fuchs, I.~Runkel, C.~Schweigert,
\textsl{Correspondences of ribbon categories},
\doi{10.1016/j.aim.2005.04.007}{Adv.\ Math.\ \textbf{199} (2006) 192--329},
\arxiv{math/0309465}{[math.CT/0309465]}.

\bibitem[FRS]{FRS1}
J.~Fuchs, I.~Runkel, C.~Schweigert,
\textsl{TFT construction of RCFT correlators. 1: Partition functions},
\doi{10.1016/S0550-3213(02)00744-7}{Nucl.\ Phys.\ B \textbf{646} (2002) 353--497}, 
\arxiv{hep-th/0204148}{[hep-th/0204148]}.

\bibitem[FSV]{FSV}
J.~Fuchs, C.~Schweigert, A.~Valentino,
\textsl{Bicategories for boundary conditions and for surface defects in 3-d TFT}, 
\doi{10.1007/s00220-013-1723-0}{Commun.\ Math.\ Phys.\ \textbf{321} (2013) 543--575}, 
\arxiv{1203.4568}{[1203.4568 [hep-th]]}.

\bibitem[FS]{FS}
J.~Fuchs, C.~Schweigert,
\textsl{Category theory for conformal boundary conditions},
in ``Vertex operator algebras in mathematics and physics'',
Fields Inst.\ Commun.\ \textbf{39} (2003) 25--71,
\arxiv{math/0106050}{[math.CT/0106050]}.

\bibitem[Ki]{Ki}
A.A.~Kirillov,
\textsl{On $G$-equivariant modular categories},
\arxiv{math/0401119}{math.QA/0401119}.

\bibitem[KO]{KO}
A.A.~Kirillov, V.~Ostrik,
\textsl{On q-analog of McKay correspondence and ADE classification of sl(2) conformal field theories},
\doi{10.1006/aima.2002.2072}{Adv.\ Math.\ \textbf{171} (2002) 183--227},
\arxiv{math/0101219}{[math.QA/0101219]}.

\bibitem[KS]{KS}
A.~Kapustin, N.~Saulina, 
\textsl{Surface operators in 3d topological field theory and 2d rational conformal field theory}, in ``
Mathematical Foundations of Quantum Field Theory and Perturbative String Theory'', 
Proc. Symp.\ Pure Math.\ \textbf{83} (2001) 175--198,
\arxiv{1012.0911}{[1012.0911 [hep-th]]}.

\bibitem[KZ]{KZ}
L.~Kong, H.~Zheng,
\textsl{Drinfeld center of enriched monoidal categories},
\doi{10.1016/j.aim.2017.10.038}{Adv.\ Math.\ \textbf{323} (2018) 411--426},
\arxiv{1704.01447}{[1704.01447 [math.CT]]}.

\bibitem[ML]{ML}
S.\ Mac Lane, 
\textit{Categories for the Working Mathematician},
\doi{10.1007/978-1-4612-9839-7}{Graduate Texts in Math.\ \textbf{5}, Springer, 1971}.

\bibitem[Mu]{M}
V.~Mulevi\v{c}ius,
work in progress.

\bibitem[M\"u]{Mu}
M.~M\"uger, 
\textsl{From Subfactors to Categories and Topology II. The quantum double of tensor categories and subfactors},
\doi{10.1016/S0022-4049(02)00248-7}{J.\ Pure Appl.\ Alg.\ \textbf{180} (2003) 159--219},
\arxiv{math/0111205}{[math.CT/0111205]}.

\bibitem[NS]{NS}
S.~Ng, P.~Schauenburg,
\textit{Higher Frobenius-Schur Indicators for Pivotal Categories},
in ``Hopf Algebras and Generalizations'', 
\doi{10.1090/conm/441/08500}{Contemp.\ Math.\ \textbf{441} (2007) 63--90},
\arxiv{math/0503167}{[math.QA/0503167]}.

\bibitem[RT]{RT2}
N.~Reshetikhin, V.G.~Turaev,
\textsl{Invariants of 3-manifolds via link polynomials and quantum groups},
\doi{10.1007/BF01239527}{Invent.\ Math.\ \textbf{103} (1991) 547--597}.

\bibitem[Tu]{Tu}
V.G.~Turaev, 
\textsl{Quantum Invariants of Knots and 3-Manifolds},  
de Gruyter, New York, 1991.

\bibitem[TV]{TuVi}
V.~Turaev, A.~Virelizier,
\textsl{Monoidal categories and topological field theory},
\doi{10.1007/978-3-319-49834-8}{Progress in Mathematics \textbf{322}, Birkh\"auser, 2017}.

\end{thebibliography}
\end{document}